\newtheorem{theorem}{Theorem}[section]
\newtheorem{proposition}[theorem]{Proposition}
\newtheorem{lemma}[theorem]{Lemma}
\theoremstyle{definition}
\newtheorem{definition}[theorem]{Definition}
\newenvironment{remark}
  {\pushQED{\qed}\remarkx}
  {\popQED\endremarkx}
\newcommand{\eps}{\epsilon}
\newcommand{\thickdot}{\text{\scalebox{0.7}{$\blacksquare$}}}
\renewcommand{\bar}[1]{\overline{#1}}
\DeclareMathOperator{\GL}{GL}
\DeclareMathOperator{\Hom}{Hom}
\DeclareMathOperator{\mult}{mult}
\DeclareMathOperator{\codim}{codim}
\newcommand{\bbP}{\mathbb{P}}
\newcommand{\bbC}{\mathbb{C}}
\newcommand{\calO}{\mathcal{O}}
\newcommand{\id}{\mathrm{id}}
\newcommand{\Str}{\mathrm{Str}}
\newcommand{\vvirg}{, \ldots ,}
\newcommand{\bgeq}{
  \mathrel{
    \vphantom{\geq}
    \mathpalette{\bgleqinn\blacktriangleright}{0.25}
  }
}
\newcommand{\bgleqinn}[3]{
  \sbox\z@{$#2\m@th#1$}
  \linethickness{.1\ht\z@}
  \begin{picture}(\wd\z@,\ht\z@)(0,-.15\ht\z@)
  \roundcap
  \put(#3\wd\z@,-.2\ht\z@){\line(1,0){.65\wd\z@}}
  \put(0,0){\box\z@}
  \end{picture}
}
\renewcommand{\Im}{\mathrm{Im}}
\title{Partial degeneration of tensors}
\author[M. Christandl]{Matthias Christandl}
\address[M. Christandl]{QMATH, Department of Mathematical Sciences, University of Copenhagen, Universitetsparken 5, 2100 Copenhagen, Denmark}
 \email[Christandl]{christandl@math.ku.dk}
\author[F. Gesmundo]{Fulvio Gesmundo}
 \address[F. Gesmundo]{Saarland Informatics Campus, Universit\"at des Saarlandes, 66123 Saarbr\"ucken, Germany; (current) Institut de Mathématiques de Toulouse, Université Paul Sabatier, Toulouse, France.}
 \email{fgesmund@math.univ-toulouse.fr}
\author[V. Lysikov]{Vladimir Lysikov}
 \address[V. Lysikov]{Faculty of Computer Science, Ruhr University Bochum, Universit\"{a}tstra\ss{}e 150, 44801 Bochum, Germany}
 \email{vladimir.lysikov@rub.de}
\author[V. Steffan]{Vincent Steffan}
\address[V. Steffan]{QMATH, Department of Mathematical Sciences, University of Copenhagen, Universitetsparken 5, 2100 Copenhagen, Denmark}
\email[Steffan]{sv@math.ku.dk}
\keywords{Restriction, Degeneration, Tensor Rank}
\subjclass[2020]{(primary) 15A69; (secondary) 14N07, 68Q15, 81P45}
\begin{document}
\begin{abstract}
Tensors are often studied by introducing preorders such as \textit{restriction} and \emph{degeneration}: the former describes transformations of the tensors by \emph{local} linear maps on its tensor factors; the latter describes transformations where the local linear maps may vary along a curve, and the resulting tensor is expressed as a limit along this curve. In this work we introduce and study \emph{partial degeneration}, a special version of degeneration where one of the local linear maps is constant whereas the others vary along a curve. Motivated by algebraic complexity, quantum entanglement and tensor networks, we present constructions based on matrix multiplication tensors and find examples by making a connection to the theory of prehomogeneous tensor spaces. We highlight the subtleties of this new notion by showing obstruction and classification results for the unit tensor. To this end, we study the notion of aided rank, a natural generalization of tensor rank. The existence of partial degenerations gives strong upper bounds on the aided rank of a tensor, which allows one to turn degenerations into restrictions. In particular, we present several examples, based on the W-tensor and the Coppersmith-Winograd tensors, where lower bounds on aided rank provide obstructions to the existence of certain partial degenerations. 
\end{abstract}
\maketitle

\section{Introduction}\label{sec:Introduction}

Restriction and degeneration are preorders on the set of tensors that capture many important concepts in classical algebraic geometry, complexity theory, combinatorics, entanglement theory and the study of tensor networks. In this work, we introduce the notion of \emph{partial degeneration}, a special version of degeneration defining a preorder which is intermediate between restriction and degeneration.

One key contribution of this work is to show that restriction, partial degeneration and degeneration are mutually inequivalent notions. To show a separation between the notions of restriction and partial degeneration we present a number of constructions based on tensors motivated from algebraic complexity theory and tensor networks. We also draw a connection to the theory of prehomogeneous tensor spaces which allows us to derive further examples manifesting this separation. To show a separation between partial degenerations and degenerations, we prove a no-go result for the unit tensor which moreover allows us to classify certain families of partial degenerations. 

Moreover, we introduce the notion of \emph{aided restriction}, which is performed on a version of the tensor augmented via an \textit{aiding matrix}. This raises the question on how large the rank of such an aiding matrix should be in order to allow certain restrictions. We study upper and lower bounds, highlighting the role of degenerations and partial degeneration.

In the remainder of this introduction we describe in more detail the main contributions of this work and briefly outline future directions and open questions.

\subsection{Background}\label{subsec:Background}

Throughout the paper, $U_1,U_2,U_3,V_1,V_2,V_3$ are complex vector spaces of dimension $u_1,u_2,u_3,v_1,v_2,v_3$, respectively. For each of these spaces, write $e_1 ,e_2 , \ldots $ for a fixed basis. Tensors in $U_1 \otimes U_2 \otimes U_3$ can be understood as a resources by introducing the notions of restriction and degeneration: For tensors $T \in U_1 \otimes  U_2 \otimes  U_3$ and $S \in V_1\otimes  V_2 \otimes  V_3$, we say that $T$ \emph{restricts} to $S$, and write $T \geq S$, if there exist linear maps $A_i : U_i \rightarrow V_i$ for $i = 1,2,3$ such that 
\[
S = (A_1 \otimes A_2 \otimes A_3)T;
\]
we say that $T$ \emph{degenerates} to $S$, and write $T \trianglerighteq S$, if $S$ is a limit of restrictions of $T$.  It is a classical result that $T \trianglerighteq S$ if and only if there are linear maps $A_i(\epsilon) : U_i \rightarrow V_i$ depending polynomially on $\epsilon$, that is with entries in the polynomial ring $\mathbb{C}[\epsilon]$, such that 
\[
(A_1(\epsilon) \otimes A_2(\epsilon) \otimes A_3(\epsilon))T =  \epsilon^d S + \epsilon^{d+1}S_1 + \dots  + \epsilon^{d+e}S_e
\]
for some natural numbers $d,e$ and some tensors $S_1 \vvirg S_e$. The integers $d$ and $e$ are called \emph{approximation degree} and \emph{error degree}, respectively; write $T \trianglerighteq_d^e S$ when it is useful to keep track of these integers. 

One can define analogous notions for any number $k$ of tensor factors. It is known that for $k =2$, the notions of restriction and degeneration are equivalent whereas for $k = 3$ they differ. This phenomenon already appears in \cite{Sylv:PrinciplesCalculusForms} and it occurs already when the tensor factors are two 2-dimensional. Write $\langle r \rangle = e_1 \otimes e_1 \otimes e_1 + \dots + e_r \otimes  e_r \otimes e_r$ for the $r$-th \emph{unit tensor} in $\mathbb{C}^{r}\otimes \mathbb{C}^{r}\otimes \mathbb{C}^{r}$. Define the $W$-tensor in $\mathbb{C}^{2}\otimes \mathbb{C}^{2}\otimes \mathbb{C}^{2}$ to be $W = e_1 \otimes  e_1 \otimes e_2 + e_1 \otimes e_2 \otimes e_1 + e_2 \otimes e_1 \otimes e_1$. It is a classical result that $ \langle 2 \rangle \trianglerighteq W$ but $\langle 2 \rangle \ngeq W$. 

Understanding tensors in this resource theoretic way has led to advances in various fields in mathematics, physics and computer science: 
\begin{sloppy}
\begin{itemize}
    \item The study of degenerations has lead to advances in several combinatorial problems, in particular the sunflower problem and cap sets~\cite{10.4007/annals.2017.185.1.8,tao}. In~\cite{christandl_et_al:LIPIcs.ITCS.2022.48}, the study of \textit{combinatorial degenerations} has lead to advances in the problem of finding large corner free sets.
    \item In quantum information theory, the study of restrictions and degenerations led to an improved understanding of the entanglement of multi-partite quantum states~\cite{threequbitsthreedifferentways,ChtDjo:NormalFormsTensRanksPureStatesPureQubits}.
    \item \textit{Tensor network representations} of many body quantum states are a special case of tensor restrictions. Studying degenerations of tensors can lead to more efficient tensor network representations of quantum states~\cite{tensornetworkfromgeometry,ChrGesStWer:Optimization}.
    \item In algebraic complexity theory, tensor restriction and degeneration play an important role in the study of the asymptotic complexity of bilinear maps. In particular, starting from \cite{BiCaLoRo:O277ComplexityApproximateMatMult,Bini:RelationsExactApproxBilAlg}, essentially all upper bounds on the exponent of the matrix multiplication rely on degenerations of suitable tensors to the matrix multiplication tensor \cite{STRASSEN+1987+406+443,10.1145/28395.28396,AlmWil:RefinedLaserMethodFMM}. A refined understanding of these notions, even in small cases, can lead to further improvements \cite{1997-buergisser,gs005,ConGesLanVen:RankBRankKron}. 
\end{itemize}
\end{sloppy}

In of all these cases, the  notions of restriction and degeneration are used to compare tensors: For example, in complexity theory, $T \geq S$ or $T \trianglerighteq S$ reflects the fact that the bilinear map corresponding to $T$ is harder to compute than the one corresponding to $S$; in quantum physics, it expresses the fact that the quantum state described by $T$ is more entangled than the one described by $S$.

\subsection{Partial degeneration}\label{subsec:introPartial degenerations}

In~\autoref{sec:Degenerations with one constant map}  of this work, we introduce and study the intermediate notion of \textit{partial degeneration}: For tensors $T \in U_1 \otimes U_2 \otimes U_3$ and $S \in V_1 \otimes  V_2 \otimes V_3$, we say that $T$ partially degenerates to $S$, and write $ T \bgeq S$, if $T$ degenerates to $S$ where the degeneration map $A_1(\epsilon ) = A_1$ can be chosen constant in $\epsilon$. Analogous notions can be defined by assuming that $A_2(\epsilon)$ or $A_3(\epsilon)$ are constant. For simplicity, we will always assume that the constant map is the one acting on the first factor. Hence, we have $T\bgeq S$ if and only if there are linear maps $A_1, A_2 (\epsilon)$ and $A_3(\epsilon)$, with $A_2(\epsilon)$ and $A_3(\epsilon)$ depending polynomially on $\epsilon$ such that $\lim_{\epsilon \rightarrow 0} \frac{1}{\epsilon^d}(A_1 \otimes A_2(\epsilon) \otimes A_3(\epsilon)) T = S$ for some $d$. As in the case of degeneration, we sometimes write $T \bgeq _d^e S$ to keep track of the approximation and error degrees.  We point out that requiring two of the three linear maps to be constant in $\eps$ provides a notion of degeneration which is equivalent to restriction, see~\autoref{rmk:one moving map}. A priori, it is unclear whether the notion of partial degeneration is indeed non-trivial, or whether one might always reduce a degeneration to a partial degeneration or a partial degeneration to a restriction. We will show this is not the case. We point out that an example of partial degeneration has been known since \cite{STRASSEN+1987+406+443}, and it was used to achieve a breakthrough result in the study of the complexity of matrix multiplication: the tensor 
\begin{equation}\label{eq:strasseninintro}
    \text{Str}_q = \sum_{i = 1}^{q-1} e_i \otimes e_q \otimes e_i + e_i \otimes e_i \otimes e_q \in \mathbb{C}^{q-1} \otimes \mathbb{C}^{q} \otimes \mathbb{C}^{q}
\end{equation}
has tensor rank equal to $2q - 2$ but it is a partial degeneration of the unit tensor $\langle q  \rangle$.

In this work, we study for the first time partial degenerations in depth. In~\autoref{sec:Degenerations with one constant map}, we construct various families of examples of partial degenerations which are not restrictions. We also study the question under which circumstances partial degenerations cannot exist and provide a no-go result for certain partial degenerations of the unit tensor.

In~\autoref{subsec:Nullsubtensors}, we construct a family of partial degenerations of the matrix multiplication tensor. Let $\langle m,n,p \rangle$ be the matrix multiplication tensor associated to the bilinear map multiplying an $m \times n$ matrix with a $n \times p$ matrix. To construct a family of partial degenerations of the matrix multiplication tensor $\langle 2,2,2 \rangle$, the challenge is to show that these are not actually restrictions of $\langle 2,2,2 \rangle$. To see this, we resort to notion of tensor compressibility, in the sense of~\cite{DBLP:journals/corr/LandsbergM16a}. 

In~\autoref{subsec:Matrix pencils}, we study the notion of partial degeneration in the setting of prehomogeneous tensor spaces where we can find many more examples of honest partial degenerations. We say that the action of a linear algebraic group $G$ on a vector space $V$ is \textit{prehomogeneous} if there is an element $v\in V$ whose orbit under the group is dense in $V$, equivalently in the Zariski or Euclidean topology. Consider the action of $\GL(U_2) \times \GL(U_3)$ on $U_1 \otimes U_2 \otimes U_3$; if this action is prehomogeneous, with the tensor $T$ having a dense orbit, then every tensor in $U_1 \otimes U_2 \otimes U_3$ is a partial degeneration of $T$. Prehomogeneous tensor spaces of this form have been studied in~\cite{SatKim:ClassificationIrredPrehomVS} and are well-understood, and the prehomogeneity of the action is determined by simple arithmetic relations among the dimensions of the tensor factors. In~\autoref{subsec:Matrix pencils}, for every instance where the space $U_1 \otimes U_2 \otimes U_3$ is prehomogeneous under the action of $\GL(U_2) \otimes \GL(U_3)$, we provide an example of a tensor that cannot be a restriction of a tensor with dense orbit. We emphasize that while it is well-understood under which conditions $U_1 \otimes U_2 \otimes U_3$ is prehomogeneous, there are in general no closed formulas for elements having dense orbit. If $\dim (U_1) = 2$, that is the case of \textit{matrix pencils}, explicit elements with dense orbit are known, see, e.g., \cite[Ch. XIII]{Gant:TheoryOfMatrices} and \cite{Pok:PerturbationsEquivalenceOrbitMatrixPencil}. In~\autoref{subsec:Matrix pencils}, we use these examples to provide explicit partial degenerations for matrix pencils.

In~\autoref{subsec:A no-go result for partial degenerations}, we study situations in which partial degenerations cannot occur. More precisely, we show that every partial degeneration of the unit tensor $\langle r \rangle$ to a concise tensor $T \in U_1 \otimes U_2 \otimes U_3$ with $\dim (U_1) =r$ can be reduced to a restriction. We use this result to show that for $\dim(U_1) = r -1$, tensors of the form~\autoref{eq:strasseninintro} are essentially all honest partial degenerations that can occur. Furthermore, we construct honest partial degenerations of $\langle r \rangle$ for any $r$ to non-concise tensors. These no-go results show that restriction, partial degenerations and degenerations are in fact three different notions.

\subsection{Aided restriction and aided rank}\label{subsec:introAided rank}

The starting point of the second part of this work is the fact that any degeneration can be turned into a restriction using \textit{interpolation}. It is known that if $T \trianglerighteq^e_d S$ then $T \boxtimes \langle 2d + 1 \rangle \geq S$ and $T \boxtimes \langle e+1 \rangle \geq S$, where $\boxtimes$ is the Kronecker product of tensors; this dates back essentially to \cite{BiCaLoRo:O277ComplexityApproximateMatMult}, and extends to a number of different settings \cite{ChrGesJen:BorderRankNonMult,tensornetworkfromgeometry,ChrGesStWer:Optimization}. In~\autoref{sec:Aided rank}, we study the case where the supporting tensor is a matrix instead of a unit tensor. More precisely, for a tensor $T \in U_1 \otimes U_2 \otimes U_3$, define 
\begin{equation*}
    T^{\thickdot p} = T \boxtimes \langle 1,1,p\rangle \in U_1 \otimes (U_2 \otimes \bbC^p) \otimes (U_3 \otimes \bbC^p);
\end{equation*}
here $\langle 1,1,p\rangle = e_1 \otimes \sum_1^p e_i \otimes e_i$ is a special instance of the matrix multiplication tensor; note that in fact $\langle 1,1,p\rangle$ is an identity matrix of size $p$, regarded as a tensor on three factors.

\autoref{lem:interpolatewithmatrixanydeg} shows that if $T \trianglerighteq S$ then there exists a $p$ such that $T^{\thickdot p} \geq S$. We call $\langle 1,1,p \rangle$ the \textit{aiding matrix} and $p$ the rank of the aiding matrix. We study upper bounds on the rank of an aiding matrix in terms of approximation and error degree of a degeneration and a partial degeneration. 

In~\autoref{subsec:Interpolation with a matrix}, we show that for partial degenerations of approximation degree $d$ and error degree $e$, the rank of the aiding matrix can be equal to $\min\{ d+1,e+1\}$. More precisely, if $T \bgeq^e_d S$ then $T^{\thickdot d + 1} \geq S$ and $T^{\thickdot e + 1} \geq S$. Even more strikingly we show that if $T \in U_1 \otimes U_2 \otimes U_3$, where the space $U_1 \otimes U_2 \otimes U_3$ is prehomogeneous under the action of $\text{GL}(U_2) \times \text{GL}(U_3)$, and if the orbit of $T$ is dense, then $T^{\thickdot 2} \geq S$ for any other tensor $S \in U_1 \otimes U_2 \otimes U_3$. Using methods from algebraic geometry, we generalize this observation to the setup where the orbit closure is a lower dimensional variety.

It turns out that these findings are in strong contrast to the case of degenerations that are not partial degenerations. To see this, in~\autoref{subsec:A substitution method for aided rank}, we develop a method to give lower bounds on the minimal possible rank of an aiding matrix. This relies on a variant of the \textit{substitution method}~\cite{5959837}.  In~\autoref{subsec:A substitution method for aided rank}, we define the notion of \textit{aided rank} as 
\begin{equation*}
    R^{\thickdot p} (T) = \text{min}\lbrace r : \langle r \rangle ^{\thickdot p} \geq T \rbrace.
\end{equation*}
When $p = 1$, this reduces to the notion of tensor rank \cite[Prop. 5.1.2.1]{Lan:GeometryComplThBook}. We show that one can generalize the substitution method to give lower bounds on aided rank and on the minimal possible rank of an aiding matrix for several examples of degeneration. For example, for the degeneration
\begin{equation}\label{eq:introaidedrank}
    \langle 2^k \rangle \trianglerighteq W^{\boxtimes k}
\end{equation}
we show that $R^{\thickdot 2^{k}-1}(W^{\boxtimes k}) \geq  2^k + 1$. In other words, the minimal rank $p$ of an aiding matrix turning the degeneration in~\autoref{eq:introaidedrank} into a restriction is $2^k$. Note that for this example, the no-go result for partial degenerations~\autoref{prop:nogoresult} gives that the degeneration cannot be realized as a partial degeneration. Also note that the minimal possible rank of an aiding matrix differs from our naive upper bound from~\autoref{lem:interpolatewithmatrixanydeg} only by a factor of $1/2$. 

\subsection{Conclusion and outlook}
In this work, we introduce and study for the first time partial degenerations which is the natural intermediate notion of restriction and degeneration of tensors. We believe that studying partial degenerations can yield deeper insight in the theory of restriction and degeneration of tensors and thereby also deeper insight into combinatorial and complexity theoretic questions as well as physical processes. 

In fact, part of the motivation to introduce aided rank and partial degeneration was the vision to use them as a tool to study the conversion of entanglement structures, in the spirit of \cite{tensornetworkfromgeometry}. Indeed, our results on aided rank find application in this context, see \cite[Section IV.A]{CLSWW:ResourceTheory}. On the lattice, tensors are associated to plaquettes touching only on adjacent vertices, rather than on all vertices as with the Kronecker product. Additional EPR pairs on the side of the plaquettes might therefore be moved from one plaquette to the other, enabling unexpected lattice transformations.

Furthermore, interpolation has proven a useful tool in lattice conversion \cite{tensornetworkfromgeometry,ChrGesStWer:Optimization}, but requires a unit tensor over the entire lattice. If degeneration is only to take place on a subset of the vertices a less demanding more localized entangled state may be sufficient; an idea that awaits its full exploration in future work. 

In fact, we emphasize that partial degenerations allow for a more economical conversion into restrictions than honest degenerations. In the three-particle SLOCC entanglement context, this means that instead of requiring a global GHZ state, we only need an EPR pair combining the vertices on which the non-constant maps are applied thus requiring fewer resources. The degree of the degeneration is additive under taking tensor copies, independently on whether this happens as a Kronecker product, or with more general structures, see \cite{ChrGesJen:BorderRankNonMult}. On the other hand, the interpolation argument is only linear in the degree. Therefore for $n$-copy SLOCC operations only $O(\log n)$ 2-level EPR pairs are required. Again, the full potential of this proposition remains to be explored in future work. 

We identify some open problems arising from this work:
\begin{enumerate}
    \item Consider the tensor 
    \begin{equation*}
        \lambda = e_1 \wedge e_2 \wedge e_3 + e_3 \otimes e_3 \otimes e_3 \in \bbC^3 \otimes \bbC^3 \otimes \bbC^3
    \end{equation*}
    where $\wedge$ is the antisymmetric product. In~\cite{tensornetworkfromgeometry}, the fact that $\langle 2,2,2 \rangle \trianglerighteq \lambda$ was used to construct an efficient representation in the `projected entangled pair' (PEPS) formalism of the `resonating valence bond state' (RVB state)~\cite{ANDERSON1973153}. It is known that $\langle 2,2,2 \rangle \not\geq \lambda$ and we conjecture that a partial degeneration is not possible either. 
    \item Consider a degeneration
    \begin{equation*}
        (A_1(\epsilon) \otimes A_2 (\epsilon) \otimes A_3 (\epsilon))T = \epsilon^d S + \sum_{i = 1}^e \epsilon^{d + i} S_i.
    \end{equation*}
    The approximation degree $d$ and the error degree $e$ control the sizes of tensors in interpolation results which are used to transform degenerations into restrictions, see~\cite{BiCaLoRo:O277ComplexityApproximateMatMult,Christandl2017_TR}. These degrees are not well understood. We propose that it might be more tangible to study the approximation and error degrees in the context of partial degenerations.
    \item The fact that degenerations can be turned into restrictions by interpolating with a unit tensor (see~\autoref{thm:interpolatingasusual}) is the key ingredient that allows one to use \emph{border rank} in the study of matrix multiplication complexity, see e.g. \cite{BiCaLoRo:O277ComplexityApproximateMatMult,10.1145/28395.28396}. It has been used more recently to construct more efficient tensor network representations of quantum states~\cite{tensornetworkfromgeometry} and for many other theoretical applications like showing that tensor rank and tensor border rank are not multiplicative under the tensor product~\cite{Christandl2017_TR,ChrGesJen:BorderRankNonMult}. It is interesting to ask for similar applications when interpolating with a matrix instead of a unit tensor as in~\autoref{lem:interpolatewithmatrixanydeg}. 
    \item In this work, we focused on tensors of order three over complex numbers. Many of our results generalize to the more general setup of $k$-fold tensor products of vector spaces over any field and it would be interesting to further explore these situations. For higher order tensor products, one potentially has a hierarchy of partial degenerations, depending on the number of linear maps depending on $\eps$. 
    \item In the recent work~\cite{Ben}, the authors prove uniqueness results and polynomial time algorithms for $r$-aided rank decompositions. They generalize their results to so-called $(\mathcal{X},\mathcal{W})$-decompositions. Many of our results on $r$-aided rank also can be generalized to this setup and it would be interesting to further explore this direction.
\end{enumerate}

\subsection{Acknowledgements}
M.C., V.L. and V.S. acknowledge financial support from the European Research Council (ERC Grant Agreement No. 81876), VILLUM FONDEN via the QMATH Centre of Excellence (Grant No.10059) and the Novo Nordisk Foundation (Grant No. NNF20OC0059939 ‘Quantum for Life’). F.G.'s work is partially supported by the Thematic Research Programme ``Tensors: geometry, complexity and quantum entanglement'', University of Warsaw, Excellence Initiative -- Research University and the Simons Foundation Award No. 663281 granted to the Institute of Mathematics of the Polish Academy of Sciences for the years 2021--2023.

\section{Preliminaries}\label{sec:Preliminaries}

In this section, we revisit the theory of restrictions and degenerations of tensors. We introduce a few special tensors that will be used in the paper and mention their relation to algebraic complexity theory and quantum information theory. For a thorough introduction to geometric aspects of degeneration of tensors we refer to~\cite{Lan:TensorBook}. More details about the relation to algebraic complexity theory can be found in~\cite{1997-buergisser,gs005}. An introduction to tensor networks can be found in~\cite{RevModPhys.93.045003}.

Let $U_1, U_2 , U_3$ and $V_1, V_2 ,  V_3$ be complex vector spaces of dimensions $u_1,u_2,u_3$ and $v_1,v_2,v_3$, respectively, and consider tensors $T \in U_1 \otimes U_2 \otimes  U_3$ and $S \in V_1\otimes  V_2 \otimes  V_3$. We say that $T$ \emph{restricts} to $S$ and write $T \geq S$ if there are linear maps $A_i : U_i \rightarrow V_i$ such that $S = (A_1 \otimes A_2 \otimes A_3)T$. We say that $T$ \emph{degenerates} to $S$ and write $T \trianglerighteq S$ if there are linear maps $A_i(\epsilon) : U_i \rightarrow V_i$ depending polynomially on $\epsilon$ such that 
\begin{equation*}
  (A_1(\epsilon) \otimes A_2(\epsilon) \otimes A_3(\epsilon))T =  \epsilon^d S + \epsilon^{d+1}S_1 + \dots  + \epsilon^{d+e}S_e  
\end{equation*}
for some natural numbers $d,e$ and some tensors $S_1, \dots , S_e$. The quantities $d$ and $e$ are called approximation degree and error degree, respectively. Sometimes, we want to keep track of $d$ and $e$ and write $T \trianglerighteq_{d}^e S$. 

Recall the \emph{Kronecker product} of tensors. Fix bases in the tensor factors and let $T \in U_1 \otimes U_2 \otimes U_3$ and $S \in V_1 \otimes V_2 \otimes V_3$ be specified in these bases by coefficients $T_{i_1 ,i_2, i_3}$ and $S_{j_1, j_2, j_3}$, that is, 
\begin{equation}\label{eq:TandSincoordinates}
    T = \sum_{i_1,i_2,i_3 = 1}^{u_1,u_2,u_3} T_{i_1,i_2,i_3} e_{i_1} \otimes e_{i_2} \otimes e_{i_3}, \; S = \sum_{j_1,j_2,j_3 = 1}^{v_1,v_2,v_3} S_{j_1,j_2,j_3} e_{j_1} \otimes e_{j_2} \otimes e_{j_3}.
\end{equation}
We define their Kronecker product $T \boxtimes S \in (U_1 \otimes V_1) \otimes (U_2 \otimes V_2) \otimes (U_3 \otimes V_3)$ as 
\begin{equation*}
    T\boxtimes S = \sum_{i_1,i_2,i_3 ,j_1,j_2,j_3  } T_{i_1,i_2,i_3}\cdot S_{j_1,j_2,j_3} (e_{i_1}\otimes e_{j_1}) \otimes (e_{i_2}\otimes e_{j_2}) \otimes (e_{i_3}\otimes e_{j_3}).
\end{equation*}
Write $T^{\boxtimes k}$ for $T\boxtimes \dots \boxtimes T$ where the Kronecker product was taken $k$ times. 

Another way of combining two tensors $T$ and $S$ in~\autoref{eq:TandSincoordinates} into one is the \textit{direct sum}. For that, pick a basis $e_{1}, \dots , e_{u_i + v_i}$ of $U_i \oplus V_i$ for $i =1, 2, 3$ and define the tensor $T \oplus S \in (U_1 \oplus V_1) \otimes (U_2 \oplus V_2) \otimes (U_3 \oplus V_3)$ via
\begin{equation*}
    (T \oplus S)_{i_1 , i_2 , i_3} = \begin{cases}
        T_{i_1, i_2, i_3} &\text{ if } i_1 \leq u_1, i_2 \leq u_2, i_3 \leq u_3, \\
        S_{i_1-u_1, i_2-u_2, i_3-u_3} &\text{ if } i_1 > u_1, i_2 > u_2, i_3 > u_3, \\
        0 &\text{ otherwise.}
    \end{cases}
\end{equation*}
We visualize the direct sum of two tensors $T$ and $S$ in~\autoref{fig:directsum}.

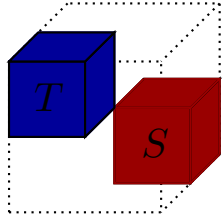
\begin{figure}
    \centering
    \begin{tikzpicture}
\draw[thick,black,dotted]  (2,0,0) -- (2,2,0) -- (0,2,0);
\draw[thick,black,dotted] (2,0,0) -- (2,0,2);
\draw[thick,black,dotted] (2,2,0) -- (2,2,2);
\draw[thick,black,dotted] (0,2,0) -- (0,2,2);
\draw[thick,black,dotted] (0,0,2) -- (2,0,2) -- (2,2,2) -- (0,2,2) -- cycle;
\draw[color = white, fill = white] (1,0,-.4) circle (.2);

\begin{scope}[shift = {(0,-1,0)}]
\begin{scope}[shift = {(1,1,0)}]
    \draw[thick,fill = red!60!black,opacity = .1] (0,0,1) -- (1,0,1) -- (1,1,1) -- (0,1,1) -- cycle;
\draw[thick,fill = red!60!black,opacity = .3] (1,0,0) -- (1,0,1) -- (1,1,1) -- (1,1,0) -- cycle;
\draw[thick,fill = red!60!black,opacity = .1] (0,1,0) -- (0,1,1) -- (1,1,1) -- (1,1,0) -- cycle;
\end{scope}
    \draw[thick,red!60!black] (2,1,0) -- (2,2,0) -- (1,2,0);
\draw[thick,red!60!black] (2,1,0) -- (2,1,1);
\draw[thick,red!60!black] (2,2,0) -- (2,2,1);
\draw[thick,red!60!black] (1,2,0) -- (1,2,1);
\draw[thick,red!60!black] (1,1,1) -- (2,1,1) -- (2,2,1) -- (1,2,1) -- cycle;
\end{scope}
\begin{scope}[shift = {(0,1,1)}]
    \draw[thick,blue!60!black] (1,0,0) -- (1,1,0) -- (0,1,0) ;
\draw[thick,blue!60!black] (1,0,0) -- (1,0,1);
\draw[thick,blue!60!blue] (1,1,0) -- (1,1,1);
\draw[thick,blue!60!black] (0,1,0) -- (0,1,1);
\draw[thick,blue!60!black] (0,0,1) -- (1,0,1) -- (1,1,1) -- (0,1,1) -- cycle;
\draw[thick,fill = blue!60!black,opacity = .1] (0,0,1) -- (1,0,1) -- (1,1,1) -- (0,1,1) -- cycle;
\draw[thick,fill = blue!60!black,opacity = .3] (1,0,0) -- (1,0,1) -- (1,1,1) -- (1,1,0) -- cycle;
\draw[thick,fill = blue!60!black,opacity = .1] (0,1,0) -- (0,1,1) -- (1,1,1) -- (1,1,0) -- cycle;
\end{scope}
\node[scale = 1.4] at (-.4,0.6,-.4){$T$}; 
\node[scale = 1.4] at (1,0,-.4){$S$}; 
\end{tikzpicture}
\caption{Visualizing the direct sum of two tensors $T$ and $S$: The tensor $T \oplus S$ is block-diagonal where one block is the tensor $T$ and the other block the tensor $S$.}
    \label{fig:directsum}
\end{figure}

A third order tensor $T \in U_1 \otimes U_2 \otimes U_3$ defines naturally three linear maps $U_1^* \to U_2 \otimes U_3$, $U_2^* \to U_1 \otimes U_3$, $U_3^* \to U_1 \otimes U_2$, called the flattening maps of $T$. We say that $T$ is concise if the three flattenings are injective. Equivalently, $T$ is concise if there are no subspace $U'_i \subseteq U_i$, with at least one strict inclusion, such that $T \in U'_1 \otimes U'_2 \otimes U'_3$. The image $T(U_1^*)$ is a linear subspaces of $ U_2 \otimes U_3 = \Hom(U_2^*,U_3)$, which can naturally be identified with a linear space of matrices. 
It is an immediate fact that the linear space $T(U_1^*)$ uniquely determines $T$ up to the action of $\GL(U_1)$. We often identify a linear space of matrices with a tensor defining it. This point of view is classical, but it turned out to be extremely useful in recent work in the study of tensor restriction and degeneration \cite{homs2022bounds,JelLandPal:ConciseTensorsMinimalBR,ChrGesZui:GapSubrank}.

We will now introduce a few special tensors that will be important throughout this work. Let $U$ be an $r$-dimensional vector space with basis $e_1 ,\dots ,e_r$. We call $$\langle r \rangle = e_1 \otimes e_1 \otimes e_1 + \dots + e_r \otimes  e_r \otimes e_r$$ the $r$-th \emph{unit tensor}.  Restriction and degeneration of the unit tensor define the notions of \emph{rank} and \emph{border rank} of tensors: given $S \in V_1 \otimes V_2 \otimes V_3$, the rank of $S$ is $R(S) = \min \lbrace r : \langle r \rangle \geq S \rbrace$; the border rank of $S$ is $\underline{R}(S) = \min \lbrace r: \langle r \rangle \trianglerighteq S \rbrace$.  

Another important tensor is $W = e_1 \otimes  e_1 \otimes e_2 + e_1 \otimes  e_2 \otimes e_1 + e_2 \otimes e_1 \otimes e_1$. It is the smallest possible example for an honest degeneration. In fact, while one can show that $R(W) = 3$, one easily sees $\langle 2 \rangle \trianglerighteq W$ via
\begin{equation*}
	\epsilon W = (e_1 + \epsilon e_2)^{\otimes 3} - e_1^{\otimes 3} + \mathcal{O}(\epsilon^2), 
\end{equation*}
hence $\underline{R}(W)=2$. In~\autoref{prop:nogoresult}, we will see that in fact $\langle 2 \rangle$ does not partially degenerate to $W$.  

A tensor that will be important is the \emph{matrix multiplication tensor} defined for any $m,n,p$ as 
\begin{equation*}
	\langle m,n,p \rangle = \sum_{i,j,k = 1}^{m,n,p} (e_i 
\otimes e_j) \otimes (e_j \otimes e_k) \otimes (e_k \otimes e_i) \in (\mathbb{C}^m \otimes \mathbb{C}^n) \otimes (\mathbb{C}^n \otimes \mathbb{C}^p) \otimes (\mathbb{C}^p \otimes \mathbb{C}^m).
\end{equation*}
We will in particular use matrix multiplication tensors of the form $\langle 1,1,p \rangle$. For a tensor $T \in U_1 \otimes U_2 \otimes U_3$, we will often consider the tensor $T^{\thickdot p} = T \boxtimes \langle 1,1,p \rangle$.

The tensors mentioned in this section have natural applications in algebraic complexity theory. A tensor $T \in U_1 \otimes  U_2 \otimes U_3$ naturally defines a bilinear map $T: U_1^* \times U_2^* \rightarrow U_3$. For example, the bilinear map induced by $\langle r \rangle$ multiplies two $r$-dimensional vectors entry-wise. The induced bilinear map for the matrix multiplication tensor $\langle m,n,p \rangle$ is the matrix multiplication, mapping two matrices, of size $m \times n$ and $n \times p$ respectively, to (the transpose of) their product, which is a matrix of size $m \times p$. In this context, the rank of a tensor encodes the number of scalar multiplications needed to evaluate the associated bilinear map. For example, a major open problem in algebraic complexity theory is to determine the \emph{exponent of matrix multiplication} $\omega$. This is the minimal $\omega$ such that for any $\epsilon >0$ one can multiply $n \times  n$ matrices using $\mathcal{O}(n^{\omega + \epsilon})$ multiplications; it can be defined in terms of rank and border rank as the minimum $\omega$ such that $R(\langle n,n,n\rangle)$ or equivalently $\underline{R}(\langle n,n,n \rangle)$ is in $\calO(n^\omega)$ \cite{1997-buergisser,gs005}.

Moreover, the tensors introduced so far have a natural interpretation in quantum information theory as well. In quantum physics, tensors correspond to multiparticle quantum states and the notion of restriction is known as conversion under \emph{stochastic local operations and classical communication (SLOCC)}. In this context, the unit tensor is known as the unnormalized Greenberger-Horn-Zeilinger (GHZ) state. The $W$ tensor also plays an important role: The fact that $\langle 2 \rangle\ngeq W$  and $W \ngeq \langle 2 \rangle$ was used in~\cite{threequbitsthreedifferentways} to observe that three qubits can be genuinely three-party entangled in two inequivalent ways. The tensor $\langle 1,1,p \rangle$ has a natural interpretation as well: It describes a quantum state on three parties where the second and third parties share an \textit{Einstein-Podolsky-Rosen pair} (EPR pair) on $p$ levels. For any tensor $T$, the tensor $T^{\thickdot p}$ is the overall (unnormalized) quantum state when all three parties share a quantum state associated to $T$ and the second and third party in addition are given an EPR pair on $p$ levels. 

\section{Partial degeneration}\label{sec:Degenerations with one constant map}

In this section, we introduce and study the notion of partial degeneration, a natural intermediate notion between restriction and degeneration. 
In~\autoref{sec:defandmot}, we will define partial degeneration. After that, we review in~\autoref{Strassenexample} a known example of a partial degeneration. In~\autoref{subsec:Nullsubtensors}, we will recall a property of tensors called \textit{compressibility} and demonstrate with an example how this can be used to rule out restriction. We will see more examples in~\autoref{subsec:Matrix pencils} using the theory of prehomogeneous tensor spaces. Finally, in~\autoref{subsec:A no-go result for partial degenerations} we will study situations where no honest partial degeneration exist.

\subsection{Definition and motivation}\label{sec:defandmot}

The main concept of this section is the following special version of degeneration, intermediate between restriction and fully general degeneration.

\begin{definition}\label{def:partialdeg}
Let $T \in U_1 \otimes U_2 \otimes U_3$ and $S \in V_1 \otimes V_2 \otimes V_3$ be tensors. We say that $T$ \textit{degenerates partially} to $S$ and write $T \bgeq S$ if there is a linear map $A_1 \colon U_1 \rightarrow V_1$ and linear maps $A_2(\epsilon), A_3(\epsilon)$ with entries in the polynomial ring $\mathbb{C}[\epsilon ]$ such that 
\begin{equation*}
    (A_1 \otimes A_2 (\epsilon) \otimes A_3 (\epsilon ))T = \epsilon^d S + \epsilon^{d+1} S_1 + \dots + \epsilon^{d + e} S_e.
\end{equation*}
We sometimes write $T \bgeq_d^e S$ to keep track of $d$ and $e$. We call a partial degeneration $T \bgeq S$ an \textit{honest partial degeneration} if $T$ does not restrict to $S$.
\end{definition}

It is clear that every restriction is a partial degeneration and every partial degeneration is a degeneration. This raises the following two questions:
\begin{enumerate}[(i)]
    \item Can every partial degeneration $T \bgeq S$ be realized as a restriction $T \geq S$? 
    \item Can every degeneration $T \trianglerighteq S$ be realized as a partial degeneration $T \bgeq S$?
\end{enumerate}
We point out that only allowing one of the three linear maps to depend on $\epsilon$ provides the same notion as restriction:
\begin{remark}\label{rmk:one moving map}
Let $T \in U_1 \otimes U_2 \otimes U_3$, $S \in V_1 \otimes V_2 \otimes V_3$ be tensors and suppose there are linear maps $A_1,A_2,A_3(\eps)$ with $A_i : U_i \to V_i$, and $A_3(\eps)$ depending polynomially on $\eps$ such that $S = \lim_{\eps \to 0} \frac{1}{\eps^d} ( A_1 \otimes A_2 \otimes A_3(\eps) T)$. Then $S$ is a restriction of $T$. To see this, write $A_3 (\eps) = \eps^d A_{3,d} + \cdots \eps^{d+e} A_{3,e}$ with $A_{3,j} : U_3 \to V_3$. It is immediate that $S = (A_1 \otimes A_2 \otimes A_{3,d})T$; this expresses $S$ as a restriction of $T$. 
\end{remark}
In~\autoref{Strassenexample}, \autoref{subsec:Nullsubtensors} and \autoref{subsec:Matrix pencils}, we provide families of examples demonstrating that the answer to question in (i) is negative. In~\autoref{subsec:A no-go result for partial degenerations}, we show that the answer to question (ii) is negative as well.

\subsection{Strassen's tensor}\label{Strassenexample}
In~\cite{STRASSEN+1987+406+443}, a first example of a partial degeneration was found: Let $U_1 \simeq \mathbb{C}^{q-1}$ and $U_2 \simeq U_3 \simeq \mathbb{C}^{q}$ and consider the tensor
\begin{equation*}
    \text{Str}_q = \sum_{i = 1}^{q-1} e_{i} \otimes e_i \otimes e_q + e_i \otimes e_q \otimes e_i \in U_1 \otimes U_2 \otimes U_3. 
\end{equation*}
It is not hard to see that $R(\Str_q) = 2q-2$. On the other hand, it is a partial degeneration of the unit tensor $\langle q \rangle$ via 
\begin{equation*}
    \epsilon \Str_q = \sum_{i = 1}^{q - 1} e_i \otimes (e_q + \epsilon e_i ) \otimes (e_q + \epsilon e_i ) - \left( \sum_{i = 1}^{q-1} e_i \right) \otimes e_q \otimes e_q + \mathcal{O}(\epsilon^2 ).
\end{equation*}
In~\autoref{subsec:A no-go result for partial degenerations}, we will show that these are essentially all partial degenerations of $\langle r \rangle$ that can be found in $U_1 \otimes U_2 \otimes U_3$ with $\dim (U_1) = {r-1}$.

\subsection{Compressibility of tensors and matrix multiplication}
 \label{subsec:Nullsubtensors}

In this section, we will find a family of examples of partial degeneration of the $2 \times 2$-matrix multiplication tensor. One challenge of finding an honest partial degeneration is to show that this partial degeneration is actually not a restriction. To achieve that, we recall the notion of \textit{compressibility}~\cite{DBLP:journals/corr/LandsbergM16a}.

\begin{definition}
A tensor $T \in U_1 \otimes U_2 \otimes U_3$ is $(a_1, a_2, a_3)$\it{-compressible} if there are linear maps $A_i: U_i \rightarrow U_i$ of rank $a_i$ such that $(A_1 \otimes A_2 \otimes A_3) T = 0$.
\end{definition}

\begin{figure}
    \centering
   \begin{tikzpicture}
\draw[thick,black,dotted]  (2,0,0) -- (2,2,0) -- (0,2,0);
\draw[thick,black,dotted] (2,0,0) -- (2,0,2);
\draw[thick,black,dotted] (2,2,0) -- (2,2,2);
\draw[thick,black,dotted] (0,2,0) -- (0,2,2);
\draw[thick,black,dotted] (0,0,2) -- (2,0,2) -- (2,2,2) -- (0,2,2) -- cycle;
\draw[color = white, fill = white] (1,0,-.4) circle (.2);
\begin{scope}[shift = {(0,-1,0)}]
\begin{scope}[shift = {(1,1,0)}]
    \draw[thick,fill = red!60!black,opacity = .1] (0,0,1) -- (1,0,1) -- (1,1,1) -- (0,1,1) -- cycle;
\draw[thick,fill = red!60!black,opacity = .3] (1,0,0) -- (1,0,1) -- (1,1,1) -- (1,1,0) -- cycle;
\draw[thick,fill = red!60!black,opacity = .1] (0,1,0) -- (0,1,1) -- (1,1,1) -- (1,1,0) -- cycle;
\end{scope}
    \draw[thick,red!60!black] (2,1,0) -- (2,2,0) -- (1,2,0);
\draw[thick,red!60!black] (2,1,0) -- (2,1,1);
\draw[thick,red!60!black] (2,2,0) -- (2,2,1);
\draw[thick,red!60!black] (1,2,0) -- (1,2,1);
\draw[thick,red!60!black] (1,1,1) -- (2,1,1) -- (2,2,1) -- (1,2,1) -- cycle;
\end{scope}
\node[scale = 1.4] at (1,0,-.4){$0$}; 
\end{tikzpicture}
    \caption{Visualization of an $(a_1, a_2, a_3)$-compressible tensor: This large $u_1 \times u_2 \times u_3$-cube is the tensor $T \in U_1 \otimes U_2 \otimes U_3$. The entries of $T$ -- specified in some fixed basis -- can be written in the cells of this cube. The smaller, red $a_1 \times a_2 \times a_3$-cube depicts a block of size $a_1\times a_2\times a_3$ where each entry of $T$ equals zero. By choosing the linear maps as projectors onto the last $u_1 - a_1$ resp.~$u_2 - a_2$ resp.~$u_3 - a_3$ coordinates, we see that each such tensor is $(a_1,a_2,a_3)$-compressible.}
    \label{fig:nullsubtensor}
\end{figure}
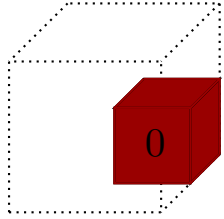

Equivalently, $T$ is $(a_1, a_2, a_3)$-compressible if there are linear subspaces $U_i' \subseteq U_i^*$ with $\dim U_i ' = a_i$, such that, as a trilinear map $T|_{U_1' \times U_2' \times U_3'} \equiv 0$. In coordinates, this is equivalent to the existence of bases of the spaces $U_1, U_2$ and $U_3$ such that, in these bases, $T_{i_1, i_2, i_3} =0$ if $i_j \geq \dim U_j - a_j$. We visualize the concept of an $(a_1, a_2, a_3)$-compressible tensor in~\autoref{fig:nullsubtensor}. The following technical result will become handy later.

\begin{lemma}\label{lem:nullsubtensor}
Let $T \in U_1 \otimes U_2 \otimes U_3$ and $S \in V_1 \otimes V_2 \otimes V_3$. Let $T \geq S$ and let $S$ be concise. If $S$ is $(a_1, a_2, a_3)$-compressible then $T$ is $(a_1, a_2, a_3)$-compressible.
\end{lemma}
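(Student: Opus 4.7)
The plan is to exploit conciseness of $S$ to force the restriction maps $A_i$ to be surjective, and then use an equivalent characterization of compressibility in terms of quotients: a tensor is $(a_1,a_2,a_3)$-compressible precisely when it maps to zero under some projection to quotients of dimensions $a_1,a_2,a_3$. Given that, the compressibility data on the $S$-side can be pulled back cleanly to the $T$-side.

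First I would record that if $S = (A_1 \otimes A_2 \otimes A_3)T$ is concise, then each $A_i \colon U_i \to V_i$ is surjective. Indeed, if some $A_i$ had image $V'_i \subsetneq V_i$, then $S \in V'_1 \otimes V'_2 \otimes V'_3$ with a strict inclusion, contradicting conciseness.

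Next I would translate compressibility to the quotient picture. If $B_i \colon V_i \to V_i$ has rank $a_i$ and $(B_1 \otimes B_2 \otimes B_3)S = 0$, let $L_i = \ker(B_i) \subseteq V_i$, which has codimension $a_i$. Writing $\pi^V_i \colon V_i \twoheadrightarrow V_i / L_i$ for the projection, the map $B_i$ factors through $\pi^V_i$ via an injection $V_i/L_i \hookrightarrow V_i$, so $(B_1 \otimes B_2 \otimes B_3)S = 0$ is equivalent to $(\pi^V_1 \otimes \pi^V_2 \otimes \pi^V_3)S = 0$ in $(V_1/L_1) \otimes (V_2/L_2) \otimes (V_3/L_3)$, a space of dimension $a_1 a_2 a_3$. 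Conversely, any family of codimension-$a_i$ subspaces of the $V_i$ with this vanishing property yields compressibility of $S$ by composing $\pi^V_i$ with an injection into $V_i$.

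Now for the pullback. Set $K_i = A_i^{-1}(L_i) \subseteq U_i$. Since $A_i$ is surjective, $K_i$ has codimension $a_i$ in $U_i$, and $A_i$ descends to an isomorphism $\bar A_i \colon U_i / K_i \xrightarrow{\sim} V_i / L_i$. Let $\pi^U_i \colon U_i \twoheadrightarrow U_i / K_i$ denote the projection, and $\bar T = (\pi^U_1 \otimes \pi^U_2 \otimes \pi^U_3)T$. Naturality of the quotient gives
\[
(\bar A_1 \otimes \bar A_2 \otimes \bar A_3)\bar T \;=\; (\pi^V_1 \otimes \pi^V_2 \otimes \pi^V_3)(A_1 \otimes A_2 \otimes A_3)T \;=\; (\pi^V_1 \otimes \pi^V_2 \otimes \pi^V_3)S \;=\; 0.
\]
As $\bar A_1 \otimes \bar A_2 \otimes \bar A_3$ is an isomorphism, $\bar T = 0$. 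Finally, choosing any section $\sigma_i \colon U_i/K_i \hookrightarrow U_i$ and setting $C_i = \sigma_i \circ \pi^U_i$ produces rank-$a_i$ maps $C_i \colon U_i \to U_i$ with $(C_1 \otimes C_2 \otimes C_3)T = 0$, witnessing $(a_1,a_2,a_3)$-compressibility of $T$.

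The whole argument is essentially linear-algebraic bookkeeping; the only conceptual subtlety is the use of conciseness to guarantee the surjectivity of the $A_i$, without which the codimension of $K_i$ in $U_i$ could exceed $a_i$ and the induced map $\bar A_i$ would fail to be an isomorphism, breaking the pullback.
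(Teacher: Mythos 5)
Your proposal is correct and follows essentially the same route as the paper: conciseness forces the restriction maps to be surjective, and then the compressing data for $S$ is transported to $T$ (the paper simply composes the rank-$a_i$ maps with the surjective restriction maps, which are exactly your pulled-back kernels $K_i = A_i^{-1}(L_i)$ in disguise). Your quotient-and-section bookkeeping is a slightly more careful variant that also lands the annihilating maps honestly as endomorphisms $U_i \to U_i$, a detail the paper's one-line conclusion leaves implicit.
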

\begin{proof}
By assumption, there are maps $A_i$ with rank $a_i$ such that $(A_1 \otimes A_2 \otimes A_3) S = 0$. As $S$ is concise, the restriction maps $M_i$ must be surjective where $S = (M_1 \otimes M_2 \otimes M_3) T$. Therefore, the maps $A_1M_1, A_2M_2$ and $A_3M_3$ also have rank $a_1,a_2$ and $a_3$, respectively. Since $(A_1M_1 \otimes A_2M_2 \otimes A_3M_3) T =  (A_1 \otimes A_2 \otimes A_3)S= 0$ the claim follows.
\end{proof}

\autoref{lem:nullsubtensor} can be used to exclude restrictions $T\geq S$ if $T$ is less compressible than $S$. An example of a tensor that is not ``very compressible'' is the matrix multiplication tensor.

\begin{lemma}\label{lem:mamuhasnonullsubtensor}
The $2\times 2$ matrix multiplication tensor $\langle 2,2,2 \rangle$ is not $(2,3,3)$-compressible.
\end{lemma}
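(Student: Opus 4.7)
The plan is to reinterpret $\langle 2,2,2\rangle$ as the trace trilinear form on $\mathrm{Mat}_2(\mathbb{C})$ and then translate $(2,3,3)$-compressibility into a dimension inequality for the span of products of two $3$-dimensional matrix subspaces, which I will rule out by a standard fact about $2\times 2$ matrices.

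More precisely, via the trace pairing $(\mathbb{C}^2\otimes\mathbb{C}^2)^*\cong \mathrm{Mat}_2(\mathbb{C})$ in each of the three factors, the tensor $\langle 2,2,2\rangle$ corresponds to the trilinear form $(X,Y,Z)\mapsto \operatorname{tr}(XYZ)$. Using the equivalent formulation of compressibility recalled just before the lemma, a $(2,3,3)$-compression amounts to subspaces $W_1\subseteq \mathrm{Mat}_2(\mathbb{C})$ of dimension $2$ and $W_2,W_3\subseteq \mathrm{Mat}_2(\mathbb{C})$ of dimension $3$ such that $\operatorname{tr}(XYZ)=0$ for every $X\in W_1$, $Y\in W_2$, $Z\in W_3$. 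Holding $Y,Z$ fixed and varying $X$ shows this is equivalent to $YZ\in W_1^{\perp}$ (orthogonal with respect to the nondegenerate trace form), and since $\dim W_1^{\perp}=4-2=2$, compressibility would force
\[
W_2\cdot W_3 \;:=\; \operatorname{span}\{YZ:Y\in W_2,\,Z\in W_3\}
\]
to have dimension at most $2$.

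The main step is then to show $\dim(W_2\cdot W_3)\geq 3$ for any two $3$-dimensional subspaces $W_2,W_3$ of $\mathrm{Mat}_2(\mathbb{C})$. The key observation is that every such $W_i$ contains an invertible matrix: the determinant is a nondegenerate quadratic form on $\mathrm{Mat}_2(\mathbb{C})\cong\mathbb{C}^4$, whose maximal totally isotropic subspaces have dimension $2$, so a codimension-one subspace cannot lie entirely inside $\{\det=0\}$ (equivalently, any linear space of rank-one $2\times 2$ matrices has dimension at most $2$). Picking an invertible $Y_0\in W_2$ yields $Y_0\cdot W_3\subseteq W_2\cdot W_3$ with $\dim(Y_0\cdot W_3)=\dim W_3=3$, giving the contradiction and completing the argument.

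The only nontrivial ingredient is the fact that every $3$-dimensional subspace of $\mathrm{Mat}_2(\mathbb{C})$ meets the invertible locus, but this is a standard Witt-index/compression-space statement, so there is no real obstacle once the setup is in place.
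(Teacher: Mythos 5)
Your proof is correct and follows essentially the same route as the paper's: both arguments exploit the multiplicative structure of $\langle 2,2,2\rangle$, invoke the same key fact that every $3$-dimensional subspace of $2\times 2$ matrices contains an invertible element, and conclude by a dimension count showing the span of products of the two $3$-dimensional spaces is too large (at least $3$-dimensional) to be annihilated by the remaining rank-$2$ constraint. The only difference is presentational: you phrase the compression dually via the trace trilinear form and $W_1^{\perp}$, while the paper works directly with the maps $A_i$ and the bilinear forms $A_3.\bigl((A_1.x)\cdot(A_2.y)\bigr)$.
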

\begin{proof}
Note that any $4 \times 4$ matrix $M = (M_{(u,v),(x,y)})_{u,v,x,y = 1,2}$ (labelled by double indices) induces a linear endomorphism of the space of $2 \times  2$ matrices via 
\begin{align*}
	M: \bbC^2 \otimes \bbC^2 &\to \bbC^2 \otimes \bbC^2 \\
 x &\mapsto M.x,\; \text{ where } (M.x)_{i,j} = \sum_{k,l = 1,2} M_{(i,j),(k,l)}x_{k,l}.
\end{align*}

Recall that $\langle 2,2,2 \rangle$ corresponds to calculating the four bilinear forms $z_{j,i} = x_{i1}y_{1j} + x_{i2}y_{2j}$ for $i,j = 1,2$, that is, the entries of $(x\cdot y)^T$ where the entries of $x$ and $y$ are regarded as variables.

Let $S = (A_1 \otimes A_2 \otimes A_3)\langle 2,2,2 \rangle$ be a restriction of the $2\times 2$ matrix multiplication tensor. Interpreting $A_1, A_2$ and $A_3$ as $4 \times  4$ matrices, an easy calculation shows that the four bilinear forms corresponding to the tensor $S$ are the four entries of the transpose of  
\begin{equation}\label{eq:nonullsubmamu}
A_3.\left( (A_1.x)\cdot (A_2.y) \right). 
\end{equation}

Now, let the rank of $A_1$ and $A_2$ be at least 3 and the rank of $A_3$ be at least 2. It is clear that the space of all $A_1.x$ for $x \in M_{2 \times  2}$ is at least 3-dimensional (the same holds for $A_2$). It is well-known that every subspace of $M_{2 \times  2}$ of dimension at least $3$ must contain an invertible matrix. Choosing $x_0\in M_{2 \times  2}$ such that $A_1 . x_0$ is invertible, we see that the space of matrices of the form  $(A_1.x_0)\cdot (A_2.y)$ for $y \in M_{2 \times  2}$ contains three linearly independent matrices. Hence, since we assumed that $A_3$ has rank $\geq 2$, we see that~\autoref{eq:nonullsubmamu} cannot be identically $0$. This finishes the proof.
\end{proof}

We record the following technical result, describing restrictions, degenerations and partial degenerations of the matrix multiplication tensor. Analogous statements are true for more general tensor networks, see e.g. \cite{CzMiSe:UniformMPSfromAG}.


\begin{lemma}\label{lem:MaMurestrictions}
Let $V_1,V_2,V_3$ be vector spaces with dimensions $v_1,v_2,v_3$ and let $S \in V_1\otimes V_2\otimes V_3$. 
Then $\langle m,n,p\rangle \geq S$ if and only if there exist three families of matrices
\begin{equation}\label{eq:tensornetworkrep}
\begin{aligned}
	&\alpha_1,\dots , \alpha_{v_1} \in \mathbb{C}^{m \times n} \\
 	&\beta_1 ,\dots ,\beta_{v_2} \in \mathbb{C}^{n \times p}, \\
	&\gamma_1 ,\dots ,\gamma_{v_3} \in \mathbb{C}^{p \times m} 
\end{aligned}
\end{equation}
such that for every $i,j,k$
\[
S_{ijk} = \trace(\alpha_i\beta_j\gamma_k).
\]
Moreover, $\langle m,n,p \rangle \trianglerighteq S$ if and only if there are matrices as in \eqref{eq:tensornetworkrep}, depending on a variable $\eps$, and an integer $d$ such that 
\[
\eps^d S_{ijk} = \trace (\alpha_i(\eps) \beta_j(\eps) \gamma_k(\eps) ) + O(\eps^{d+1}).
\]
In particular, if the matrices $\alpha_i$ can be chosen constant in $\eps$, then $\langle m,n,p\rangle \bgeq S$.
\end{lemma}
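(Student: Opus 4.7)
The plan is to unpack the definitions in coordinates: choose bases of $V_1, V_2, V_3$, view the restriction maps $A_1, A_2, A_3$ as matrix-valued linear forms on the appropriate tensor factors, and then verify that applying $A_1 \otimes A_2 \otimes A_3$ to $\langle m,n,p\rangle$ produces traces of triple matrix products. The whole lemma is essentially bookkeeping.

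Concretely, let $\{f_a\}, \{g_b\}, \{h_c\}$ be bases of $V_1, V_2, V_3$ with dual bases $\{f_a^*\}$, etc. A linear map $A_1 \colon \mathbb{C}^m \otimes \mathbb{C}^n \to V_1$ is determined by the scalars $(\alpha_a)_{ij} = f_a^*(A_1(e_i \otimes e_j))$, which assemble into $v_1$ matrices $\alpha_a \in \mathbb{C}^{m \times n}$. Similarly define $\beta_b \in \mathbb{C}^{n \times p}$ from $A_2$ and $\gamma_c \in \mathbb{C}^{p \times m}$ from $A_3$. Using the formula
\[
\langle m,n,p\rangle = \sum_{i,j,k}(e_i \otimes e_j) \otimes (e_j \otimes e_k) \otimes (e_k \otimes e_i),
\]
I compute
\[
(A_1 \otimes A_2 \otimes A_3)\langle m,n,p\rangle = \sum_{a,b,c}\Bigl(\sum_{i,j,k}(\alpha_a)_{ij}(\beta_b)_{jk}(\gamma_c)_{ki}\Bigr) f_a \otimes g_b \otimes h_c.
\]
The inner sum equals $\sum_i (\alpha_a \beta_b \gamma_c)_{ii} = \mathrm{trace}(\alpha_a \beta_b \gamma_c)$. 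Thus $S = (A_1 \otimes A_2 \otimes A_3)\langle m,n,p\rangle$ if and only if $S_{abc} = \mathrm{trace}(\alpha_a \beta_b \gamma_c)$, which is exactly the content of the first equivalence. Conversely, any triple of matrix families as in \eqref{eq:tensornetworkrep} defines maps $A_1, A_2, A_3$ via the same formulas, proving the reverse implication.

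For the degeneration statement, the identical calculation goes through when the maps $A_i$ depend polynomially on $\eps$: the coefficient matrices $\alpha_a(\eps), \beta_b(\eps), \gamma_c(\eps)$ also depend polynomially on $\eps$, and
\[
(A_1(\eps) \otimes A_2(\eps) \otimes A_3(\eps))\langle m,n,p\rangle = \sum_{a,b,c} \mathrm{trace}\bigl(\alpha_a(\eps) \beta_b(\eps) \gamma_c(\eps)\bigr) f_a \otimes g_b \otimes h_c.
\]
Matching this expression against $\eps^d S + O(\eps^{d+1})$ coefficient by coefficient gives the trace equation in the statement. Finally, for partial degeneration, if $A_1$ can be chosen independent of $\eps$, then by definition the scalars $(\alpha_a)_{ij} = f_a^*(A_1(e_i \otimes e_j))$ are independent of $\eps$, so the $\alpha_a$ are constant, which is the last claim.

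There is no substantive obstacle: the whole argument is a coordinate unpacking, and the only observation that is not purely mechanical is that the triple sum $\sum_{i,j,k}(\alpha_a)_{ij}(\beta_b)_{jk}(\gamma_c)_{ki}$ equals $\mathrm{trace}(\alpha_a\beta_b\gamma_c)$, which one recognizes from the cyclic structure of indices in the definition of $\langle m,n,p\rangle$. I will present the proof by doing this coordinate computation for the restriction case and then noting that it is preserved verbatim when one allows $\eps$-dependent entries and, finally, when one freezes $A_1$.
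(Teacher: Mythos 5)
Your coordinate computation is correct, and the key identity $\sum_{i,j,k}(\alpha_a)_{ij}(\beta_b)_{jk}(\gamma_c)_{ki}=\trace(\alpha_a\beta_b\gamma_c)$ is exactly the mechanism behind the lemma; the paper itself records this statement without proof (as a standard tensor-network-style fact), and your argument is the expected one that fills in that omission. The $\eps$-dependent and constant-$A_1$ cases follow verbatim as you say, so there is nothing to object to.
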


We will apply \autoref{lem:MaMurestrictions} to characterize degenerations of the matrix multiplication tensor $\langle 2 ,2,2\rangle$.

\begin{proposition}\label{prop:333constnatmap}
	Let $S \in \mathbb{C}^{3}\otimes \mathbb{C}^{4}\otimes \mathbb{C}^{4}$ be a concise $(3,3,3)$-compressible tensor. Then $S$ is an honest partial degeneration of $\langle 2,2,2 \rangle$. 
\end{proposition}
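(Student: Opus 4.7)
The plan is to prove the two parts of the statement separately: first the non-restriction $\langle 2,2,2\rangle\not\geq S$, then the partial degeneration $\langle 2,2,2\rangle\bgeq S$.

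\textbf{Non-restriction.} I use the compressibility obstruction. A preliminary observation is that $(a,b,c)$-compressibility implies $(a',b',c')$-compressibility whenever $a'\leq a$, $b'\leq b$, $c'\leq c$: given a triple of maps realizing the former, one can post-compose each $A_i$ with a rank-reducing endomorphism (for example, with a projection onto a subspace of $\mathrm{im}(A_i)$ of the desired dimension) that preserves the tensor equation. Hence \autoref{lem:mamuhasnonullsubtensor} implies $\langle 2,2,2\rangle$ is not $(3,3,3)$-compressible. If one had $\langle 2,2,2\rangle\geq S$, then conciseness of $S$ together with \autoref{lem:nullsubtensor} would force $\langle 2,2,2\rangle$ to inherit the $(3,3,3)$-compressibility of $S$, a contradiction.

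\textbf{Partial degeneration.} By \autoref{lem:MaMurestrictions}, it suffices to produce constant matrices $\alpha_1,\alpha_2,\alpha_3\in\bbC^{2\times 2}$ and polynomial families $\beta_j(\epsilon),\gamma_k(\epsilon)\in\bbC^{2\times 2}$ for $j,k=1,\ldots,4$, such that $\epsilon S_{i,j,k}=\trace(\alpha_i\beta_j(\epsilon)\gamma_k(\epsilon))+O(\epsilon^2)$. After an invertible change of basis in $U_2$ and $U_3$ (which is absorbed into $\beta_j$ and $\gamma_k$), the $(3,3,3)$-compressibility of $S$ lets me assume $S_{i,j,k}=0$ whenever $j\geq 2$ and $k\geq 2$. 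I will then take $\alpha_1,\alpha_2,\alpha_3$ to be a basis of the traceless matrices $\mathfrak{sl}_2\subset\bbC^{2\times 2}$, so that $(\mathrm{span}\,\alpha_i)^{\perp}=\bbC\, I$ under the trace pairing, and set
\[
\beta_1(\epsilon)=I+\epsilon B_1',\ \ \beta_j(\epsilon)=\epsilon B_j'\ (j\geq 2),\ \ \gamma_1(\epsilon)=I+\epsilon C_1',\ \ \gamma_k(\epsilon)=\epsilon C_k'\ (k\geq 2).
\]
Expanding $\trace(\alpha_i\beta_j(\epsilon)\gamma_k(\epsilon))$ case by case: the $O(1)$ contribution appears only at $(j,k)=(1,1)$ and equals $\trace(\alpha_i)=0$; the $O(\epsilon^2)$ contribution for $j,k\geq 2$ is consistent with the vanishing $S_{i,j,k}=0$; and matching the $O(\epsilon)$ terms yields the linear systems $\trace(\alpha_i C_k')=S_{i,1,k}$ for $k\geq 2$, $\trace(\alpha_i B_j')=S_{i,j,1}$ for $j\geq 2$, and $\trace(\alpha_i B_1')+\trace(\alpha_i C_1')=S_{i,1,1}$. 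Each of these is solvable because the map $X\mapsto(\trace(\alpha_i X))_{i=1,2,3}\colon\bbC^{2\times 2}\to\bbC^3$ is surjective, its kernel being the line $\bbC\, I$.

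The main subtlety is precisely the choice $\alpha_i\in\mathfrak{sl}_2$: it simultaneously kills the potentially obstructing $O(1)$ contribution of the $(1,1)$ slice (so the entire construction enters at approximation degree $d=1$) and keeps the trace pairing surjective, so that all of the matching equations for $B_j'$ and $C_k'$ decouple into independently solvable linear systems. Once these linear-algebra ingredients are in place, the verification that the ansatz realizes $\langle 2,2,2\rangle\bgeq S$ is mechanical.
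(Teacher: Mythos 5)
Your proposal is correct and takes essentially the same route as the paper: the non-restriction part is exactly the paper's combination of \autoref{lem:nullsubtensor} and \autoref{lem:mamuhasnonullsubtensor}, and the partial degeneration is built through \autoref{lem:MaMurestrictions} at approximation degree $1$ with the constant matrices $\alpha_1,\alpha_2,\alpha_3$ chosen as a basis of the traceless $2\times 2$ matrices, which is precisely the paper's choice. The only difference is presentational: where the paper exhibits explicit matrices $\beta_j(\epsilon),\gamma_k(\epsilon)$, you solve the first-order trace-matching equations abstractly via surjectivity of $X\mapsto(\trace(\alpha_1 X),\trace(\alpha_2 X),\trace(\alpha_3 X))$, which is an equally valid (and clean) way to conclude.
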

\begin{proof}
Fixing bases, we can write our tensor $S$ as 
\begin{equation*}
	S = \sum_{i,j,k = 1}^{3,4,4} S_{i,j,k}e_i \otimes e_j \otimes e_k
\end{equation*}
such that $S_{i,j,k} = 0$ whenever both $j$ and $k$ are at least $2$.

From~\autoref{lem:MaMurestrictions}, it suffices to find $2 \times 2 $ matrices 
\begin{equation*}
\alpha_1 ,\dots , \alpha_3 \in \mathbb{C}^{m \times n}, \;
\beta_1(\epsilon) ,\dots ,\beta_4(\epsilon) \in \mathbb{C}[\epsilon]^{n \times p}, \;
\gamma_1(\epsilon) ,\dots ,\gamma_4(\epsilon) \in \mathbb{C}[\epsilon]^{p \times m}
\end{equation*}
such that
\begin{equation}\label{eq:mpsnullsubtensor}
	\epsilon S_{i,j,k} = \trace(\alpha_i \beta_j \gamma_k) + \mathcal{O}(\epsilon^{2}).
\end{equation}

Choosing matrices 

\begin{center}
\begin{tabular}{c  c  c}
	\vspace{1em}
$\alpha_1 = \begin{pmatrix}
      1&0\\
      0&-1\end{pmatrix}$ & $\beta_1 = \begin{pmatrix}
      \epsilon(S_{1,1,1}-1)+1&\,S_{2,1,1}\\
      S_{3,1,1}&1\end{pmatrix}$ &  $\gamma_1 = \begin{pmatrix}
      \epsilon+1&0\\
0&1\end{pmatrix}$\\\vspace{1em}
 $\alpha_2 = \begin{pmatrix}
      0&0\\1&0\end{pmatrix}$ & $\beta_2 = \begin{pmatrix}
      \epsilon S_{1,2,1}&\epsilon S_{2,2,1}\\
\epsilon S_{3,2,1}&0\end{pmatrix}$& $\gamma_2 = \begin{pmatrix}
      \epsilon\,S_{1,1,2}&\epsilon\,S_{2,1,2}\\
      \epsilon\,S_{3,1,2}&0\end{pmatrix}$ \\\vspace{1em}
      $\alpha_3 = \begin{pmatrix}
      0&1\\
0&0\end{pmatrix}$ &  $\beta_3 = \begin{pmatrix}
      \epsilon S_{1,3,1}&\epsilon S_{2,3,1}\\
     \epsilon S_{3,3,1}&0\end{pmatrix}$& $\gamma_3 = \begin{pmatrix}
      \epsilon\,S_{1,1,3}&\epsilon\,S_{2,1,3}\\
\epsilon\,S_{3,1,3}&0\end{pmatrix}$\\\vspace{1em}
&$ \beta_4 = \begin{pmatrix}
      0&\epsilon S_{2,4,1}\\
      \epsilon S_{3,4,1}&-\epsilon S_{1,4,1}\end{pmatrix}$
&$\gamma_4 = \begin{pmatrix}
      0&\epsilon\,S_{2,1,4}\\
      \epsilon\,S_{3,1,4}&-\epsilon\,S_{1,1,4}\end{pmatrix}$
\end{tabular}
\end{center}
one easily verifies that~\autoref{eq:mpsnullsubtensor} is fulfilled. 

Since $S$ is concise and is $(3,3,3)$-compressible, by~\autoref{lem:nullsubtensor} and \autoref{lem:mamuhasnonullsubtensor} we deduce that $S$ is not a restriction of $\langle 2,2,2 \rangle$, therefore $S$ is an honest partial degeneration of $\langle 2,2,2 \rangle$. 
\end{proof}

An explicit example of a concise and $(3,3,3)$-compressible tensor in $\mathbb{C}^3 \otimes \mathbb{C}^4 \otimes \mathbb{C}^4$ is $\Str_4$ from~\autoref{Strassenexample}. Hence,~\autoref{prop:333constnatmap} proves that $\langle 2,2,2 \rangle \not \geq \Str_4$ and $\langle 2,2,2 \rangle \bgeq \Str_4$.

\begin{remark}\label{rem:orbitdimof222}
\autoref{lem:mamuhasnonullsubtensor} implies that no concise tensor in $\bbC^3 \otimes \bbC^4 \otimes \bbC^4$ which is $(2,3,3)$-compressible is a restriction of $\langle 2,2,2 \rangle$. On the other hand, \autoref{prop:333constnatmap} shows that every $(3,3,3)$-compressible tensor is a partial degeneration of $\langle 2,2,2\rangle$. One might wonder if $(2,3,3)$-compressible tensors are degenerations or partial degenerations of $\langle 2,2,2\rangle$. This is not the case. In fact, the variety of all degenerations of $\langle 2,2,2 \rangle$ in $\mathbb{C}^3\otimes \mathbb{C}^4 \otimes \mathbb{C}^4$ has dimension 31~\cite[Section 5]{https://doi.org/10.48550/arxiv.2101.03148}. However, a simple calculation -- the code for which can be found in~\autoref{codeappendix} -- shows that the orbit closure of a generic element of $\mathbb{C}^3\otimes \mathbb{C}^4 \otimes \mathbb{C}^4$ which is $(2,3,3)$-compressible has dimension $37$; in particular the variety of all $(2,3,3)$-compressible tensors has dimension at least $37$.
\end{remark}

\subsection{Prehomogeneous spaces}\label{subsec:Matrix pencils}

In this section, we will see more examples of partial degenerations by making a connection to the well-studied theory of prehomogeneous tensor spaces.

\begin{definition}
Let $G$ be a group acting on a vector space $V$. We say that $V$ is \emph{prehomogeneous} under the action of $G$ if there is an element $T\in V$ such that $G.T$ is dense in $V$ with respect to the Zariski topology, i.e. $\overline{G.T} = V$.
\end{definition}

Consider the space $U_1 \otimes U_2 \otimes U_3$ where the $U_i$ are vector spaces of dimension $u_i$. Clearly, if $T,S \in U_1 \otimes U_2 \otimes U_3$ such that $S$ is in the orbit closure of $T$ under the action of $\operatorname{GL}(U_2) \times \operatorname{GL}(U_3)$, then $T \bgeq S$. Hence, if $U_1 \otimes U_2 \otimes U_3$ is prehomogeneous under the action of  $\operatorname{GL}(U_2) \times \operatorname{GL}(U_3)$, and $T$ is an element of the dense orbit then all tensors $S \in U_1 \otimes U_2 \otimes U_3$ are partial degenerations of $T$. 

Prehomogeneity of $U_1 \otimes U_2 \otimes U_3$  under the action of $\operatorname{GL}(U_2) \times \operatorname{GL}(U_3)$ only depends on the dimensions of the vector spaces involved and is easy to check~\cite{SatKim:ClassificationIrredPrehomVS}. 

\begin{theorem}\label{thm:prehomrange}
Assume $u_2 \leq u_3$. Define $\lambda(u_1) = \frac{u_1 + \sqrt{u_1^2 - 4}}{2}$.
Then, the space  $U_1 \otimes U_2 \otimes U_3$ is prehomogeneous under the action of $\operatorname{GL}(U_2) \times \operatorname{GL}(U_3)$ if and only if $u_3 > \lambda(u_1) u_2$.
\end{theorem}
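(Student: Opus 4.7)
My plan is to attack the two directions separately: the necessity of $u_3 > \lambda(u_1) u_2$ follows from a clean dimension count, while the sufficiency is the classical result of Sato--Kimura that is cited in the paragraphs preceding the statement.

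For the necessity direction, the plan is to compare $\dim V$ and $\dim G$, accounting for the generic stabiliser. Set $V = U_1 \otimes U_2 \otimes U_3$ and $G = \GL(U_2) \times \GL(U_3)$, so $\dim V = u_1 u_2 u_3$ and $\dim G = u_2^2 + u_3^2$. The one-parameter subgroup $K = \{(\lambda \id_{U_2}, \lambda^{-1} \id_{U_3}) : \lambda \in \mathbb{C}^*\} \subset G$ acts trivially on every tensor in $V$, so the stabiliser of any $T \in V$ has dimension at least $1$. A dense orbit therefore requires $u_2^2 + u_3^2 - 1 \geq u_1 u_2 u_3$, and since all terms are integers this is equivalent to the strict inequality
\[
    u_3^2 - u_1 u_2 u_3 + u_2^2 > 0.
\]
Dividing by $u_2^2$ and setting $x = u_3/u_2 \geq 1$, one notes that the quadratic $x^2 - u_1 x + 1$ has roots $\lambda(u_1)$ and $1/\lambda(u_1)$, so positivity on $[1, \infty)$ is equivalent to $x > \lambda(u_1)$, i.e., $u_3 > \lambda(u_1) u_2$.

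For the sufficiency direction, the plan is to invoke the Sato--Kimura classification of irreducible prehomogeneous vector spaces \cite{SatKim:ClassificationIrredPrehomVS}: the triples $(\GL(U_2) \times \GL(U_3),\, U_1 \otimes U_2 \otimes U_3)$ (viewed as $u_1$-tuples of $u_2 \times u_3$ matrices under simultaneous left-right equivalence) appear in their list, and the condition distinguishing the prehomogeneous cases is exactly the arithmetic inequality derived above. The argument on their side proceeds by showing that whenever the numerical inequality holds, one may explicitly produce a tensor whose stabiliser is precisely the trivial one-dimensional group $K$, thereby saturating the dimension bound.

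The main obstacle is the sufficient direction: the dimension count alone does not automatically exhibit a dense orbit, since a priori the generic stabiliser could be strictly larger than $K$. Exhibiting, or at least proving the existence of, a tensor with stabiliser of dimension exactly one requires either the full classification machinery of \cite{SatKim:ClassificationIrredPrehomVS} or, in the case $u_1 = 2$, the explicit description of matrix pencils via Kronecker canonical form (to which the paper returns in \autoref{subsec:Matrix pencils}). Since the statement is invoked only as a known classification result, the cleanest write-up will carry out the dimension bookkeeping in full and refer to \cite{SatKim:ClassificationIrredPrehomVS} for the converse.
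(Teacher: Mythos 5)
The paper itself offers no proof of this statement: it is quoted as a known classification result, with the preceding paragraph pointing to \cite{SatKim:ClassificationIrredPrehomVS}, so your write-up actually does more than the source text. Your necessity argument is correct and self-contained: the scalar torus $\{(\lambda\,\id_{U_2},\lambda^{-1}\id_{U_3})\}$ forces every stabiliser to have dimension at least one, a dense (hence open, hence full-dimensional) orbit gives $u_2^2+u_3^2-1\geq u_1u_2u_3$, and over the integers this is the strict positivity of $u_2^2+u_3^2-u_1u_2u_3$, which for $u_2\leq u_3$ translates into $u_3>\lambda(u_1)u_2$ exactly as you say (note both your quadratic step and the theorem's formula for $\lambda(u_1)$ tacitly require $u_1\geq 2$; for $u_1=1$ the space is always prehomogeneous and the statement degenerates). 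Deferring sufficiency to \cite{SatKim:ClassificationIrredPrehomVS} is consistent with how the paper uses the result.

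Two caveats on your description of the sufficiency direction. First, your sketch of the cited argument is not accurate: the generic stabiliser is \emph{not} the one-dimensional torus $K$ in general, and the dimension bound is not saturated. When a dense orbit exists its stabiliser has dimension exactly $u_2^2+u_3^2-u_1u_2u_3$, which can be arbitrarily large (e.g.\ $u_1=2$, $u_2=1$, $u_3=3$ gives a four-dimensional stabiliser). The actual mechanism behind sufficiency is a recursive castling argument, replacing $(u_1,u_2,u_3)$ by $(u_1,u_2,u_1u_2-u_3)$ until one reaches a case with $u_3\geq u_1u_2$; this is precisely why the paper later remarks that ``the proof of~\autoref{thm:prehomrange} gives a recursive way of constructing elements with dense orbit.'' Second, for $u_1\geq 2$ the $\GL(U_2)\times\GL(U_3)$-module $U_1\otimes U_2\otimes U_3\cong (U_2\otimes U_3)^{\oplus u_1}$ is not irreducible, so the triple does not literally appear in the Sato--Kimura list of irreducible prehomogeneous spaces; one must invoke their castling machinery (equivalently, rigidity of generic representations of the $u_1$-arrow Kronecker quiver, where the same quadratic form $u_2^2+u_3^2-u_1u_2u_3$ appears as the Tits form). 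Neither point breaks your proof, since sufficiency is cited rather than argued, but the citation should be framed as resting on castling rather than on exhibiting a tensor with one-dimensional stabiliser.
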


Hence, for any choices of $u_1,u_2,u_3$ satisfying the conditions in~\autoref{thm:prehomrange}, there is an element $T \in U_1 \otimes U_2 \otimes U_3$ such that for all $S \in  U_1 \otimes U_2 \otimes U_3$ it holds $T\bgeq S$. 

To show that there exists $S$ which is not a restriction of $T$, we use the following well-known statement. For completeness, we will include a proof.
\begin{lemma}\label{lem:concisesameorbit}
Let $T,S \in U_1 \otimes U_2 \otimes U_3$ be tensors. Assume $S$ is concise.
Then $T\geq S$ if and only if $T$ and $S$ lie in the same $\operatorname{GL}(U_1) \times \operatorname{GL}(U_2) \times \operatorname{GL}(U_3)$-orbit.
\end{lemma}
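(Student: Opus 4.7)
The plan is a straightforward argument built on the flattening-map characterisation of conciseness that is already recalled in the Preliminaries.

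First, the ``if'' direction is immediate. If $T$ and $S$ lie in the same $\operatorname{GL}(U_1) \times \operatorname{GL}(U_2) \times \operatorname{GL}(U_3)$-orbit, there exist $g_i \in \operatorname{GL}(U_i)$ with $S = (g_1 \otimes g_2 \otimes g_3)T$. Taking $A_i := g_i$ in the definition of restriction (with $V_i = U_i$) gives $T \geq S$; no use of conciseness is needed here.

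For the ``only if'' direction, assume $T \geq S$ and fix maps $A_i : U_i \to U_i$ realising $S = (A_1 \otimes A_2 \otimes A_3)T$. The entire content of the statement is that each $A_i$ is forced to be invertible, and I would extract this from the three flattening maps simultaneously. Denote by $T^{(i)} : U_i^* \to U_j \otimes U_k$ the $i$-th flattening, for $\{i,j,k\} = \{1,2,3\}$. A direct calculation shows
\begin{equation*}
S^{(1)} = (A_2 \otimes A_3) \circ T^{(1)} \circ A_1^{\top},
\end{equation*}
and analogous identities hold for $S^{(2)}$ and $S^{(3)}$. Conciseness of $S$ means that each $S^{(i)}$ is injective. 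Applied to the identity above, injectivity of $S^{(1)}$ forces $A_1^{\top}$ to be injective, and since $A_1^{\top}$ is an endomorphism of the finite-dimensional space $U_1^*$, it is an isomorphism; hence $A_1 \in \operatorname{GL}(U_1)$. Running the same argument with $S^{(2)}$ and $S^{(3)}$ yields $A_2 \in \operatorname{GL}(U_2)$ and $A_3 \in \operatorname{GL}(U_3)$, so $(A_1,A_2,A_3)$ is an element of $\operatorname{GL}(U_1) \times \operatorname{GL}(U_2) \times \operatorname{GL}(U_3)$ mapping $T$ to $S$.

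There is no real obstacle: the only subtle point is making sure the flattening identity is stated on the correct side (so that conciseness of $S$ constrains the input maps $A_i$, not the output maps $A_j \otimes A_k$), which is why the transpose appears. Everything else is a one-line dimension argument per factor.
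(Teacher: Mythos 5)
Your proof is correct and follows essentially the same route as the paper: in both cases the whole content is that conciseness of $S$ forces each restriction map $A_i$ to be invertible, so the restriction is realized by a group element. The only cosmetic difference is that you use the flattening-injectivity formulation of conciseness via the identity $S^{(1)} = (A_2 \otimes A_3)\circ T^{(1)}\circ A_1^{\top}$, whereas the paper invokes the equivalent minimal-subspace formulation, noting $S \in (\Im A_1)\otimes(\Im A_2)\otimes(\Im A_3)$ and that a non-invertible $A_i$ would give a proper subspace, contradicting conciseness.
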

\begin{proof}
By definition, $T \geq S$ holds if and only if there are linear maps $A_1, A_2$ and $A_3$ such that $(A_1 \otimes A_2 \otimes A_3)T = S$. In particular $S \in (\Im A_1) \otimes (\Im A_2) \otimes (\Im A_3)$. If one of the maps $A_i$ is not invertible, the corresponding image $\Im A_i$ is a proper subspace $U_i$, showing that $S$ is not concise.
\end{proof}

\begin{theorem}\label{thm:prehompartialdeg}
Let $U_1, U_2, U_3$ have dimensions $u_1, u_2, u_3$ such that $\lambda(u_1) u_2 < u_3 < u_1 u_2$, where $\lambda(u_1) = \frac{u_1 + \sqrt{u_1^2 - 4}}{2}$.
Then there exist tensors $T, S \in U_1 \otimes U_2 \otimes U_3$ such that $T \bgeq S$, but $T \not \geq S$.
\end{theorem}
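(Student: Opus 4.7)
Set $V := U_1 \otimes U_2 \otimes U_3$, $G_1 := \GL(U_2) \times \GL(U_3)$, and $G := \GL(U_1) \times G_1$. The plan is to exhibit $T \in V$ with dense $G_1$-orbit together with a concise tensor $S \in V$ that is not $G$-equivalent to $T$. By \autoref{thm:prehomrange}, the inequality $\lambda(u_1) u_2 < u_3$ guarantees that $V$ has an open dense $G_1$-orbit $\mathcal{U}$; pick $T \in \mathcal{U}$. The observation preceding the theorem then gives $T \bgeq S$ for every $S \in V$, so only the non-restriction $T \not\geq S$ remains. Since $\GL(U_1)$ commutes with $G_1$, each $\GL(U_1)$-translate of $\mathcal{U}$ is another open dense $G_1$-orbit and hence coincides with $\mathcal{U}$, so the full $G$-orbit of $T$ is $G \cdot T = \mathcal{U}$; by \autoref{lem:concisesameorbit} the theorem reduces to producing a concise tensor $S$ in the complement $\mathcal{U}^c := V \setminus \mathcal{U}$.

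The main step is a codimension argument. Each flattening rank-deficiency locus $V^{\mathrm{nc}}_i \subseteq V$ is a classical determinantal variety of codimension $u_j u_k - u_i + 1$; the hypothesis $u_3 < u_1 u_2$, together with the inequalities $u_1 < u_2 u_3$ and $u_2 < u_1 u_3$ (which are automatic in the prehomogeneous range since $u_1,u_2 \geq 2$ and $u_2 \leq u_3$), makes each of these codimensions at least $2$. Hence the non-concise locus $V^{\mathrm{nc}} = V^{\mathrm{nc}}_1 \cup V^{\mathrm{nc}}_2 \cup V^{\mathrm{nc}}_3$ has codimension at least $2$ in $V$. On the other hand, $\mathcal{U}^c$ is a nonempty proper closed $G_1$-invariant subvariety of $V$, and any proper closed $G_1$-invariant subvariety---including $V^{\mathrm{nc}}$---must be contained in $\mathcal{U}^c$. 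Thus, to finish it suffices to show that $\mathcal{U}^c$ contains a codimension-$1$ irreducible component: the generic point of such a component necessarily lies outside $V^{\mathrm{nc}}$ and provides the required concise $S$.

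The main obstacle is exhibiting this codimension-$1$ component of $\mathcal{U}^c$, or equivalently, a nonzero $G_1$-semi-invariant polynomial on $V$. My plan is to produce one via the discriminant of the slice pencil: viewing $T$ as a $u_1$-parameter family of $u_2 \times u_3$ matrices, the Sato--Kimura theory of the reducible prehomogeneous representation $(G_1, V)$ yields a classical discriminantal semi-invariant whose vanishing locus is a $G_1$-invariant hypersurface on which the slice family acquires an extra degeneracy (e.g.\ a multiple regular eigenvalue in the Kronecker canonical form). For $u_1 = 2$ this is completely explicit: $S$ can be taken to be a pencil whose regular part contains a Jordan block with a repeated eigenvalue, which is straightforward to check is concise and has a $G_1$-stabilizer strictly larger than that of a generic $T$, and hence lies in $\mathcal{U}^c$; the general case is handled similarly once an appropriate semi-invariant is supplied.
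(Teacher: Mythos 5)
Your reduction is sound up to a point: identifying the full $G$-orbit of $T$ with the open $\GL(U_2)\times\GL(U_3)$-orbit, noting that the non-concise locus has codimension at least $2$, and reducing the theorem (via \autoref{lem:concisesameorbit}) to producing a single concise tensor outside the open orbit is all correct, and it is also how the paper begins. The genuine gap is the step you yourself flag as the main obstacle: the claim that the complement $\mathcal{U}^c$ of the open orbit must contain a codimension-$1$ irreducible component, equivalently that a nonconstant $G_1$-semi-invariant exists. This is false in general within the range of the theorem. Take $(u_1,u_2,u_3)=(2,3,5)$, i.e.\ pencils of $3\times 5$ matrices (here $\lambda(2)u_2=3<5<6$): the generic Kronecker form is the direct sum of the $1\times 2$ and $2\times 3$ Kronecker blocks (column minimal indices $1,2$, no regular part), and its stabilizer in $\GL_3\times\GL_5$ contains the two-parameter torus $\bigl(s\oplus tI_2,\ s^{-1}I_2\oplus t^{-1}I_3\bigr)$ adapted to the splittings $U_2=\bbC^1\oplus\bbC^2$, $U_3=\bbC^2\oplus\bbC^3$. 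A character $\det{}^a\otimes\det{}^b$ of $\GL_3\times\GL_5$ restricts on this torus to $s^{a-2b}t^{2a-3b}$, which is trivial only for $a=b=0$. Since a nonconstant relative invariant cannot vanish on the open orbit, its character must be trivial on this stabilizer, so there are no nonconstant semi-invariants; and since every codimension-$1$ irreducible component of the $G_1$-stable set $\mathcal{U}^c$ would be cut out by an irreducible semi-invariant, every component of $\mathcal{U}^c$ has codimension at least $2$ in this case. So the Sato--Kimura ``discriminantal semi-invariant'' you plan to use simply does not exist here. Your explicit $u_1=2$ fallback also fails in this very example: a $3\times 5$ pencil whose regular part has size at least $2$ (in particular one with a repeated eigenvalue) necessarily carries a zero-column Kronecker block, hence has a common kernel vector and is not concise, so \autoref{lem:concisesameorbit} cannot be applied to it.

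Note that what you need is much weaker than a codimension-$1$ component: one concise point of $\mathcal{U}^c$ suffices, and this is exactly what the paper supplies. It writes down an explicit concise tensor $S$ (with $u_1-1$ slices of rank $u_2$ and one slice of rank $u_2-p$, where $p=u_1u_2-u_3$), computes the stabilizer of $S$ under $\GL(U_1)\times\GL(U_2)\times\GL(U_3)$, and checks $\dim G\cdot S<u_1u_2u_3=\dim G\cdot T$, so $S\notin G\cdot T$ and $T\not\geq S$. To repair your argument you would have to replace the semi-invariant step by such a direct construction, or by some other proof that the open orbit omits at least one concise tensor.
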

\begin{proof}
We know from~\autoref{thm:prehomrange} that the space $U_1 \otimes U_2 \otimes U_3$ is prehomogeneous under $\operatorname{GL}(U_2) \times \operatorname{GL}(U_3)$.
Let $T$ be a tensor in the dense $\operatorname{GL}(U_2) \times \operatorname{GL}(U_3)$-orbit, so that $T \bgeq S$ for every $S \in U_1 \otimes U_2 \otimes U_3$.

Let $p = u_1 u_2 - u_3$. Note that $u_1 - 1 \leq \lambda(u_1) < u_1$, so $\lambda(u_1) u_2 < u_3 < u_1 u_2$ implies that $0 < p < u_2$.
Define the tensor $S \in U_1 \otimes U_2 \otimes U_3$ as
\[
S = \sum_{i = 1}^{u_1 - 1} e_i \otimes \left(\sum_{j = 1}^{u_2 } e_j \otimes e_{(i-1) u_2 + j}\right) + e_{u_1 } \otimes \left(\sum_{j = 1}^{u_2 - p} e_j \otimes e_{(u_1 - 1) u_2 + j}\right)
\]
It is not hard to see that the tensor $S$ is concise.

To show that $T$ and $S$ lie in different $\operatorname{GL}(U_1) \times \operatorname{GL}(U_2) \times \operatorname{GL}(U_3)$-orbits, we compute the dimensions of these orbits.
Denote $G = \operatorname{GL}(U_1) \times \operatorname{GL}(U_2) \times \operatorname{GL}(U_3)$.

For $T$ we have
$U_1 \otimes U_2 \otimes U_3 \supset \overline{G \cdot T} \supset \overline{[\operatorname{GL}(U_2) \times \operatorname{GL}(U_3)]\cdot T} = U_1 \otimes U_2 \otimes U_3$,
hence $\overline{G\cdot T} = U_1 \otimes U_2 \otimes U_3$ and $\dim G \cdot T = u_1 u_2 u_3$.

For $S$, the dimension of the orbit $G \cdot S$ can be found as $\dim G \cdot S = \dim G - \dim \operatorname{Stab}_G(S)$.
The stabilizer $\operatorname{Stab}_G(S)$ is isomorphic to $P(1, u_1) \times P(u_2 - p, u_2)$ where $P(a, b) \subset \operatorname{GL}_b$ is the parabolic group preserving a subspace of dimension $a$.
Indeed, let $S_i \in U_2 \otimes U_3$ be the slices of $S$ corresponding to the standard basis, that is, $S = \sum_{i = 1}^{u_1} e_i \otimes S_i$.
Note that $\operatorname{rk}(S_i) = u_2$ for $i < u_1 $ and $\operatorname{rk}(S_{u_1 }) = u_2 - p$. Moreover, a nonzero linear combination $\sum_{i = 1}^{u_1} \alpha_i S_i$ has rank $u_1 - p$ if and only if $\alpha_i = 0$ for $i \leq u_2 - 1$.
It follows that if $(A \otimes B \otimes C) S = S$, then $A$ preserves the $1$-dimensional subspace $\left<e_{u_1}\right>$.
Therefore, we have $a_{u_1, u_1} (B \otimes C) S_{u_1} = S_{u_1}$ and it follows that $B$ preserves the $(u_2 - p)$-dimensional subspace $\left<e_{1},\dots,e_{u_2  - p}\right>$, which is the image of $S_{u_1 }$ considered as a linear map $U_3^* \to U_2$.
Now, given $A$ and $B$ which preserve the required subspaces, the map $C$ such that $(A \otimes B \otimes C) S = S$ always exists and is unique.
To prove this, note that $S$ considered as a linear map $U_3^* \to U_1 \otimes U_2$ is an isomorphism between $U_3^*$ and the subspace $(\langle e_1 \dots e_{u_1 - 1} \rangle \otimes U_2 \oplus \left<e_{u_1 }\right> \otimes \left<e_1, \dots, e_{u_2  - p}\right>) \subset U_1 \otimes U_2$
Thus, $C$ can be found as the contragredient map to $A \otimes B$ restricted to this subspace.

From the description of $\operatorname{Stab}_G(S)$ it follows that
\[
\dim \operatorname{Stab}_G(S) = (u_1^2 - u_1 + 1) + (u_2^2 - p(u_2 - p))
\]
and 
\begin{multline*}
\dim G \cdot S = u_3^2 + (u_1 - 1) + p(u_2 - p) = u_3(u_1 u_2 - p) + (u_1 - 1) + p(u_2 - p) =\\ u_1 u_2 u_3 - p (u_3 - u_2) + u_1 - 1 - p^2 < u_1 u_2 u_3 - u_3 + u_2 + u_1 - 2 < u_1 u_2 u_3.
\end{multline*}
The last inequality holds because $u_2$ cannot be equal to $1$ under the assumptions of the theorem, and thus $u_3 \geq (u_1 - 1) u_2 > u_1 - 2 + u_2$.

It follows that the orbits of $T$ and $S$ are distinct and thus, $T \not\geq S$ by~\autoref{lem:concisesameorbit}.
\end{proof}

The proof of~\autoref{thm:prehompartialdeg} can be used to find concrete examples for partial degenerations: In fact, the proof of~\autoref{thm:prehomrange} gives a recursive way of constructing elements with dense orbit. A closed formula for elements of $U_1 \otimes U_2 \otimes U_3$ that have dense orbit on the other hand is not known. To see more concrete examples of partial degenerations, we now focus on tensors $T \in \mathbb{C}^{2}\otimes \mathbb{C}^{m}\otimes \mathbb{C}^{n}$. Clearly,~\autoref{thm:prehomrange} tells us that this space is $\GL(\mathbb{C}^m) \times \GL(\mathbb{C}^n)$-prehomogeneous whenever $m \neq n$. Fixing as basis $e_1, e_2$ of $\mathbb{C}^2$, write $T = e_1 \otimes T_1 + e_2 \otimes T_2$ where $T_1,T_2 \in \mathbb{C}^m \otimes \mathbb{C}^n$ can be thought of as $m \times n$ matrices. Hence $T$ is uniquely determined by a pair of matrices $[T_1,T_2]$, one often called the \emph{matrix pencil} associated with $T$. 

The fact that matrix pencil spaces are prehomogeneous has been known for a long time. In particular, we know an explicit element whose orbit is dense from~\cite{Pok:PerturbationsEquivalenceOrbitMatrixPencil}.

\begin{theorem}\label{thm:denseorbit} 
	For $m<n$ the action of $\GL(\mathbb{C}^m) \times \GL(\mathbb{C}^n)$ on $\mathbb{C}^{2}\otimes \mathbb{C}^{m}\otimes \mathbb{C}^{n}$ is \emph{prehomogeneous}, that is, there is a tensor $T \in \mathbb{C}^{2}\otimes \mathbb{C}^{m}\otimes \mathbb{C}^{n}$ such that its orbit closure is the whole space:
\begin{equation*}
	\overline{(\GL(\mathbb{C}^m) \times \GL(\mathbb{C}^n)).T} = \mathbb{C}^{2}\otimes \mathbb{C}^{m}\otimes \mathbb{C}^{n}
\end{equation*}
In particular, for any tensor $T$ with dense orbit, every other tensor is a partial degeneration of $T$. A particular choice of a tensor $T$ with dense orbit is the tensor associated with the matrix pencil $[I_1,I_2]$ where 
\begin{equation}\label{eq:denseorbitpencil}
I_1 =
    \begin{pmatrix}
    1&0&0&\dots &0 &0&\dots&0\\
    0&1&0&\dots &0 &0&\dots&0\\
    0&0&1&\dots &0 &0&\dots&0\\
    \vdots&\vdots&\vdots&\ddots &\vdots&\vdots&\ddots &\vdots\\
    0&0&0&\dots &1&0&\dots&0
    \end{pmatrix}, \;
    I_2 =
    \begin{pmatrix}
    0&\dots&0&1&0&0&\dots &0 \\
    0&\dots&0&0&1&0&\dots &0 \\
    0&\dots&0&0&0&1&\dots &0 \\
    \vdots&\ddots &\vdots&\vdots&\vdots&\vdots&\ddots &\vdots\\
    0&\dots&0&0&0&0&\dots &1
    \end{pmatrix}
\end{equation}
\end{theorem}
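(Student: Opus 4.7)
The plan is to verify the three ingredients separately: prehomogeneity of the action, denseness of the orbit of the explicit tensor $[I_1, I_2]$, and the translation of density into a partial degeneration.

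Prehomogeneity is immediate from \autoref{thm:prehomrange}: with $u_1 = 2$, we have $\lambda(2) = \tfrac{2 + \sqrt{4-4}}{2} = 1$, so the criterion $u_3 > \lambda(u_1) u_2$ becomes exactly $n > m$, which is our hypothesis.

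To prove that $T = e_1 \otimes I_1 + e_2 \otimes I_2$ has dense $G$-orbit for $G = \GL(\mathbb{C}^m) \times \GL(\mathbb{C}^n)$, I would compute $\dim \operatorname{Stab}_G(T)$ directly and verify that $\dim G - \dim \operatorname{Stab}_G(T)$ equals $2mn = \dim(\mathbb{C}^2 \otimes \mathbb{C}^m \otimes \mathbb{C}^n)$. After rewriting the action on pairs of matrices, the stabilizer condition becomes $A I_k = I_k B$ for $k = 1,2$ with $A \in \GL(\mathbb{C}^m)$ and $B \in \GL(\mathbb{C}^n)$. Setting $s = n - m$ and reading off these equations row by row, the first $m$ rows of $B$ are forced to equal $[A \mid 0_{m \times s}]$ and the last $m$ rows are forced to equal $[0_{m \times s} \mid A]$. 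When $n \geq 2m$ these constraints involve disjoint rows of $B$: the block $A$ is a free element of $\GL_m$ and the $n - 2m$ middle rows of $B$ are entirely free, giving $\dim \operatorname{Stab}_G(T) = m^2 + n(n - 2m) = s^2$. When $m < n < 2m$ the overlapping rows instead force $A$ to vanish on $(m - s) \times s$ blocks in the top-right and bottom-left corners and to satisfy $A_{i,j} = A_{i-s, j-s}$ on the remaining band; a direct count of the resulting equivalence classes, after imposing the induced corner-zero conditions, yields again $s^2$ free parameters. In either case $\dim G \cdot T = m^2 + n^2 - s^2 = 2mn$, so the orbit is dense.

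Finally, the ``in particular'' statement follows directly from the definition of partial degeneration: if $T$ has dense orbit then for any $S \in \mathbb{C}^2 \otimes \mathbb{C}^m \otimes \mathbb{C}^n$ we have $S \in \overline{G.T}$, which by the polynomial-curve characterization of orbit closures produces linear maps $A_2(\epsilon), A_3(\epsilon)$ with entries in $\mathbb{C}[\epsilon]$ and an integer $d$ with $(\id_{\mathbb{C}^2} \otimes A_2(\epsilon) \otimes A_3(\epsilon))T = \epsilon^d S + O(\epsilon^{d+1})$, exhibiting $T \bgeq S$ with $A_1 = \id$ constant.

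The main obstacle in this plan is the stabilizer count in the range $m < n < 2m$, where the interplay between the Toeplitz-type identifications and the induced zero conditions requires careful bookkeeping to confirm that the dimension is exactly $(n-m)^2$. An alternative route, which sidesteps this bookkeeping at the cost of heavier machinery, would be to invoke the Kronecker canonical form of matrix pencils and verify by an explicit row/column permutation that $[I_1, I_2]$ decomposes as a direct sum of $L$-blocks with balanced index sequence, i.e. the canonical form of a generic $m \times n$ pencil.
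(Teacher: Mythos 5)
Your proposal is correct, but it takes a genuinely different route from the paper: the paper does not prove \autoref{thm:denseorbit} at all -- the prehomogeneity range is quoted from the Sato--Kimura classification (\autoref{thm:prehomrange}), the explicit dense-orbit pencil $[I_1,I_2]$ is taken from the classical pencil literature (Gantmacher, Ch.~XIII, and Pokrzywa), and the ``in particular'' clause is the observation, already made at the beginning of \autoref{subsec:Matrix pencils}, that membership in a $\GL(U_2)\times\GL(U_3)$-orbit closure yields a partial degeneration with $A_1=\id$. You instead verify density of the orbit of $[I_1,I_2]$ by a direct stabilizer-dimension count; this is elementary and self-contained, and in fact makes your appeal to \autoref{thm:prehomrange} redundant, since exhibiting a single tensor whose orbit has dimension $2mn$ already proves prehomogeneity. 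Your setup is sound (writing the stabilizer condition as $AI_k=I_kB$ only replaces $B$ by $B^{-T}$, which changes no dimension), the case $n\geq 2m$ is complete, and the step you flag for $m<n<2m$ does close with the value $(n-m)^2$ you predict: there the two row conditions determine $B$ entirely in terms of $A$, and $A$ is constrained exactly by the two $(m-s)\times s$ zero corners together with the shift relation $A_{i+s,j+s}=A_{i,j}$, $s=n-m$; the free parameters are the shift-orbits of index pairs avoiding the corners, and summing over the $s$ admissible starting rows one finds $\tfrac{s(s+1)}{2}+\tfrac{s(s-1)}{2}=s^2$ orbits when $s\leq m-s$ and $(m-s)^2+(2s-m)m=s^2$ when $s>m-s$. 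Hence $\dim G\cdot T=m^2+n^2-s^2=2mn$ in all cases and the orbit is dense. Your alternative via the Kronecker canonical form is essentially the argument behind the sources the paper cites: permuting rows and columns by residue classes modulo $s$ exhibits $[I_1,I_2]$ as a direct sum of $s$ blocks $L_{\epsilon_1},\dots,L_{\epsilon_s}$ with balanced minimal indices $\epsilon_i\in\{\lfloor m/s\rfloor,\lceil m/s\rceil\}$, which is the generic (hence dense-orbit) pencil, and whose endomorphism algebra has dimension $\sum_{i,j}\max(0,\epsilon_j-\epsilon_i+1)=s^2$, consistent with your count. Either completion is fine; what your approach buys is a proof of the quoted classical fact rather than a citation, at the cost of the corner/shift bookkeeping (or of invoking Kronecker's theory).
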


For example, let $n = m + 1$. According to~\autoref{thm:prehompartialdeg}, the pencil 
\begin{equation*}
    S_1 =
    \begin{pmatrix}
    1&0&0&\dots &0 &0\\
    0&1&0&\dots &0 &0\\
    0&0&1&\dots &0 &0\\
    \vdots&\vdots&\vdots&\ddots &\vdots&\vdots\\
    0&0&0&\dots &1&0
    \end{pmatrix}, \;
    S_2 =
    \begin{pmatrix}
    0&\dots&0&1 \\
    0&\dots&0&0 \\
    0&\dots&0&0 \\
    \vdots&\ddots &\vdots&\vdots\\
    0&\dots&0&0
    \end{pmatrix}
\end{equation*}
is an honest partial degeneration of the dense orbit element in~\autoref{eq:denseorbitpencil}.

\subsection{A no-go result for the unit tensor}\label{subsec:A no-go result for partial degenerations}

In this section, we will see that under certain circumstances, partial degenerations do not exist even when degenerations do. Recall that we defined the unit tensor as $\langle r \rangle = e_1 \otimes e_1 \otimes e_1 + \dots + e_r \otimes  e_r \otimes e_r$. We first show that there are no proper partial degenerations of the unit tensor if the constant map has full rank. This no-go result will be used to prove a classification result in this setting.

\begin{proposition}\label{prop:nogoresult}
Let $S \in V_1 \otimes V_2 \otimes V_3$ be any tensor. If $\langle r \rangle \bgeq S$ via degeneration maps $A_1, A_2(\epsilon)$ and $A_3(\epsilon)$ where the constant map $A_1$ is of full rank $r$ then $\langle r \rangle \geq S$.
\end{proposition}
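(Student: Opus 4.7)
The plan is to use the linear independence of the columns of $A_1$ (guaranteed by the full-rank hypothesis) to decouple the partial degeneration into $r$ independent ``rank-one'' identities, and then to read off the $\eps^d$-coefficient to reconstruct $S$ as an honest restriction. Writing $a_i = A_1 e_i \in V_1$, $b_i(\eps) = A_2(\eps) e_i \in V_2 \otimes \bbC[\eps]$, and $c_i(\eps) = A_3(\eps) e_i \in V_3 \otimes \bbC[\eps]$, the hypothesis reads
\[
\sum_{i=1}^{r} a_i \otimes b_i(\eps) \otimes c_i(\eps) = \eps^d S + \eps^{d+1} S_1 + \dots + \eps^{d+e} S_e.
\]
Because $A_1 \colon \bbC^r \to V_1$ has rank $r$, the vectors $a_1, \dots, a_r$ are linearly independent, and I can choose a dual system $\phi_1, \dots, \phi_r \in V_1^*$ with $\phi_j(a_i) = \delta_{ij}$.

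Next, I would apply $\phi_j \otimes \id \otimes \id$ to both sides for each $j$. The left-hand side collapses to $b_j(\eps) \otimes c_j(\eps)$, while the right-hand side becomes $\eps^d T_j + O(\eps^{d+1})$ for some $T_j \in V_2 \otimes V_3$ (namely the corresponding slice of $S$ in the dual basis). In particular, $b_j(\eps) \otimes c_j(\eps)$ has $\eps$-order at least $d$. Let $\alpha_j = \operatorname{ord}_\eps b_j(\eps)$ and $\gamma_j = \operatorname{ord}_\eps c_j(\eps)$ (with the convention that the zero polynomial has order $+\infty$), and factor $b_j(\eps) = \eps^{\alpha_j}\tilde b_j(\eps)$, $c_j(\eps) = \eps^{\gamma_j}\tilde c_j(\eps)$ with $\tilde b_j(0), \tilde c_j(0)$ nonzero whenever the corresponding polynomial is not identically zero. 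Since $\bbC[\eps]$ is an integral domain, $b_j(\eps) \otimes c_j(\eps)$ has $\eps$-order exactly $\alpha_j + \gamma_j$ with leading coefficient $\tilde b_j(0) \otimes \tilde c_j(0)$; therefore $\alpha_j + \gamma_j \geq d$ for every $j$.

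Finally, I would extract the coefficient of $\eps^d$ on both sides of the displayed equation. Only indices $j$ with $\alpha_j + \gamma_j = d$ contribute, and they contribute $a_j \otimes \tilde b_j(0) \otimes \tilde c_j(0)$, yielding
\[
S = \sum_{j:\, \alpha_j + \gamma_j = d} a_j \otimes \tilde b_j(0) \otimes \tilde c_j(0).
\]
Defining constant linear maps $B_2 \colon \bbC^r \to V_2$ and $B_3 \colon \bbC^r \to V_3$ by $B_2 e_j = \tilde b_j(0)$, $B_3 e_j = \tilde c_j(0)$ when $\alpha_j + \gamma_j = d$ and $B_2 e_j = B_3 e_j = 0$ otherwise gives $(A_1 \otimes B_2 \otimes B_3)\langle r \rangle = S$, which is the desired restriction.

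I do not expect a real obstacle: the whole argument is driven by the dual-basis trick that separates the sum into $r$ independent rank-one statements, and this trick is available precisely because $A_1$ has full rank $r$. The edge cases where some $b_j$ or $c_j$ is identically zero pose no problem, since such terms contribute $0$ to $S$ and the corresponding summand is killed by choosing $B_2 e_j = B_3 e_j = 0$. Conceptually, the full-rank hypothesis is exactly what prevents cancellation between the $r$ rank-one summands and thus forbids any honest partial degeneration of $\langle r \rangle$ in this regime.
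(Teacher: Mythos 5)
Your proof is correct and follows essentially the same route as the paper: the full rank of $A_1$ lets you decouple the degeneration into the $r$ slices of $S$ (the paper does this by reducing to $A_1=\id$ via invertibility, you via a dual system of functionals), and in each slice one sees a rank-at-most-one matrix, which immediately yields the restriction. The only difference is cosmetic: where the paper argues that each slice is a limit of rank-one matrices and hence has rank at most one, you extract the coefficient of $\eps^{d}$ in $\bbC[\eps]$ and exhibit the slice explicitly as $\tilde b_j(0)\otimes\tilde c_j(0)$, which is a perfectly valid algebraic rendering of the same step.
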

\begin{proof}
It is clear that we can assume $\dim (V_1) = r$ and that $A_1$ is invertible.

Assume 
\begin{equation*}
    S = \lim_{\epsilon \rightarrow 0}(\id_{V_1} \otimes A_2(\epsilon) \otimes A_3(\epsilon)) \langle r \rangle 
\end{equation*}
is a degeneration where the first map is the identity. That is, we have
\begin{equation}\label{eq:decompSrankonematrices}
	S = e_1 \otimes M_1 + \dots  + e_r \otimes M_r
\end{equation}
where $M_i = \lim_{\epsilon \rightarrow 0} A_2(\epsilon)e_i \otimes A_3(\epsilon) e_i$. Hence, it is clear that for all $i$, $M_i$ must be a rank-1 matrix as limit of rank-1 matrices. But clearly, a tensor of the form in~\autoref{eq:decompSrankonematrices}, where all matrices $M_i$ have rank 1, is a restriction of $\langle r \rangle$.   

Now, let $S = \lim_{\epsilon \to 0} (A_1 \otimes A_2(\epsilon) \otimes A_3(\epsilon)) \langle r \rangle$ be any partial degeneration of $\langle r \rangle$. From before, we know that $\tilde{S}=(A_1^{-1} \otimes \id_{V_2} \otimes \id_{V_3})S =  \lim_{\epsilon \to 0} (\id_{V_1} \otimes A_2(\epsilon) \otimes A_3(\epsilon)) \langle r \rangle$ is a restriction of $\langle r \rangle$. Hence, the same holds for $S= (A_1 \otimes \id_{V_2} \otimes \id_{V_3} )\tilde{S}$. This finishes the proof. 
\end{proof}

\begin{remark}
	We note that the result in~\autoref{prop:nogoresult} does not apply to degenerations.~\autoref{prop:nogoresult} in particular says that if $V_1$ has dimension $r$ and $S \in V_1 \otimes V_2 \otimes V_3$ concise we cannot have an honest partial degeneration $\langle r \rangle \bgeq S$ (otherwise the constant map would be invertible by conciseness of $S$). But, for example, it is well-known that the unit tensor $\langle 2 \rangle$ does not restrict but degenerates to $W = e_1 \otimes e_1 \otimes e_2 + e_1 \otimes e_2 \otimes e_1 + e_2 \otimes e_1 \otimes e_1$ which is concise in the same space as $\langle 2 \rangle$. Hence, $W$ is an honest degeneration of $\langle 2 \rangle$ but not a partial degeneration. We note that the same holds for the degenerations $\langle 2^k \rangle \trianglerighteq W^{\boxtimes k}$ for all $k$. 
\end{remark}

It is clear that one cannot drop the condition that $A_1$ has full rank: in~\autoref{Strassenexample}, we saw that Strassen's tensor $\operatorname{Str}_r$ is an example of a partial degeneration of $\langle r \rangle$ where $A_1$ has rank $r-1$. In fact, we can use~\autoref{prop:nogoresult} to prove the following characterization of all partial degenerations of $\langle r \rangle$ where the constant map has rank $r-1$.
\begin{proposition}\label{prop:strassentensor}
    Let $T \in U_1 \otimes U_2 \otimes U_3$ with $\dim (U_1) = r - 1$ be a concise tensor such that $\langle r \rangle \bgeq T$ and $\langle r \rangle \not\geq T$.
    Then, for some $q$ such that $3 \leq q \leq r$, the tensor $T$ decomposes as
    \begin{equation*}
        T = S_q + X_{r-q}
    \end{equation*}
    where $\operatorname{Str}_{q} \geq S_q$ and $\langle r - q \rangle \geq X_{r-q}$. 
\end{proposition}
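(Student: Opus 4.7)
The plan is to analyze the partial degeneration $\langle r\rangle \bgeq T$ through the kernel of the constant map $A_1$. Since $T$ is concise with $\dim U_1=r-1$, the map $A_1\colon \mathbb{C}^r\to U_1$ must have full rank $r-1$, so $\ker A_1$ is one-dimensional, spanned by some vector $v=\sum_{i\in I}v_ie_i$ with $v_i\neq 0$ for $i\in I$. I would set $q=|I|$ and show that the decomposition of $T$ arises from splitting $U_1$ along the images $a_i=A_1e_i$.

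First I would rule out $q\leq 2$. If $q=1$, then $A_1e_j=0$ for a unique $j$ and removing that summand presents $T$ as a partial degeneration of $\langle r-1\rangle$ through a full-rank constant map; \autoref{prop:nogoresult} then gives $\langle r-1\rangle\geq T$ and hence $\langle r\rangle\geq T$, contradicting the hypothesis. If $q=2$, the vectors $A_1e_1$ and $A_1e_2$ are parallel, so combining those two summands yields $T=A_1e_1\otimes M+\sum_{i>2}a_i\otimes M_i$ where $M$ is a limit of rank-$\leq 2$ matrices (hence of rank $\leq 2$) and each $M_i$ is a limit of rank-$\leq 1$ matrices (hence of rank $\leq 1$); this gives $R(T)\leq 2+(r-2)=r$ and therefore $\langle r\rangle\geq T$, again a contradiction.

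For $q\geq 3$, after reindexing I may take $I=\{1,\dots,q\}$ and after absorbing the coefficients $v_i$ into $A_2(\epsilon)$ and $A_3(\epsilon)$ I may assume $\sum_{i\in I}a_i=0$. Then $f_i:=a_i$ for $i<q$ is a basis of $W_1:=\langle a_i:i\in I\rangle$, $W_2:=\langle a_i:i>q\rangle$ is complementary, and I decompose $T=T^{(1)}+T^{(2)}$ along $U_1=W_1\oplus W_2$. Projecting the partial degeneration to $W_2$ in the first factor exhibits $T^{(2)}$ as a partial degeneration of $\langle r-q\rangle$ through a full-rank constant map, so \autoref{prop:nogoresult} gives $\langle r-q\rangle\geq T^{(2)}$, and I set $X_{r-q}=T^{(2)}$.

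The main step is $\Str_q\geq T^{(1)}$. Projecting to $W_1$ and using $a_q=-\sum_{i<q}f_i$ yields
\begin{equation*}
\epsilon^d T^{(1)}=\sum_{i<q}f_i\otimes\bigl(b_i(\epsilon)\otimes c_i(\epsilon)-b_q(\epsilon)\otimes c_q(\epsilon)\bigr)+O(\epsilon^{d+1}),
\end{equation*}
where $b_i(\epsilon)=A_2(\epsilon)e_i$ and $c_i(\epsilon)=A_3(\epsilon)e_i$. Writing $T^{(1)}=\sum_{i<q}f_i\otimes T^{(1)}_i$, I aim to produce common $w\in U_2$, $z\in U_3$ with every slice $T^{(1)}_i\in\langle w\rangle\otimes U_3+U_2\otimes\langle z\rangle$; decomposing $T^{(1)}_i=\xi_i\otimes z+w\otimes\gamma_i$ then exhibits $T^{(1)}$ as a restriction of $\Str_q$ via the map sending $e_i\mapsto f_i$ on the first factor and $e_q\mapsto w$, $e_q\mapsto z$ on the second and third. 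The key difficulty is producing $w$ and $z$. After rescaling the columns of $A_2,A_3$ so that all $b_i(0)=w$ and all $c_i(0)=z$---possible in the generic case where $b_q(0)\otimes c_q(0)\neq 0$, the degenerate case being reduced by factoring an $\epsilon$ out of an $A_2$- or $A_3$-column---the two polynomial curves $\epsilon\mapsto b_i(\epsilon)\otimes c_i(\epsilon)$ and $\epsilon\mapsto b_q(\epsilon)\otimes c_q(\epsilon)$ both lie in the variety $\mathcal{V}_1\subset U_2\otimes U_3$ of rank-$\leq 1$ tensors and pass through the smooth point $w\otimes z$, whose tangent space is exactly $\langle w\rangle\otimes U_3+U_2\otimes\langle z\rangle$. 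The geometric fact I will use, proved by choosing a local analytic parametrization $\Psi$ of $\mathcal{V}_1$ near $w\otimes z$, is that if two polynomial curves in a smooth subvariety share a basepoint and agree up to order $\epsilon^{d-1}$, then the $\epsilon^d$-coefficient of their difference lies in the tangent space at the basepoint; the contribution of $\Psi-d\Psi(0)$ is at least quadratic in the lifted curves and therefore appears only at order $\epsilon^{d+1}$ in the ambient difference. This places every $T^{(1)}_i$ in $T_{w\otimes z}\mathcal{V}_1$, giving $\Str_q\geq T^{(1)}$, and setting $S_q=T^{(1)}$ completes the decomposition $T=S_q+X_{r-q}$.
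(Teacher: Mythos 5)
Your overall architecture is sound and in fact mirrors the paper's proof: the paper also extracts the integer $q$ from the linear dependency among the columns of $A_1$ (its factorization $A_1 = AM_qDP$ encodes exactly your kernel-support $q$), splits off a $\langle r-q\rangle$-part handled by \autoref{prop:nogoresult}, excludes $q\le 2$, and then analyzes the remaining $q$-block slice by slice. Your treatment of the $q$-block in the \emph{generic} case is a correct and rather elegant replacement of the paper's coordinate computation: the lemma that two curves on a variety through a common smooth point, agreeing to order $\eps^{d-1}$, have the $\eps^d$-coefficient of their difference in the Zariski tangent space is true (apply any $f\in I(X)$ to both curves; the quadratic terms in the increment are $O(\eps^{2d})$), and the tangent space to the rank-one cone at $w\otimes z\neq 0$ is indeed $\langle w\rangle\otimes U_3+U_2\otimes\langle z\rangle$, which yields $\Str_q\ge T^{(1)}$.

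The genuine gap is your one-clause dismissal of the degenerate case $b_q(0)\otimes c_q(0)=0$. First, "factoring an $\eps$ out of an $A_2$- or $A_3$-column" does not preserve the degeneration unless you move that $\eps$ into the partner column, and such a redistribution cannot change the vanishing order of the product $b_i(\eps)\otimes c_i(\eps)$, which is the invariant that matters. The correct reduction, when $m:=\operatorname{ord}\big(b_q(\eps)\otimes c_q(\eps)\big)<d$, is to note that all products $b_i\otimes c_i$ are congruent mod $\eps^d$, hence all have order exactly $m$ with a common rank-one leading coefficient, and to divide the product curves by $\eps^m$ (equivalently rescale each pair of columns by its own orders), rerunning your tangent argument at that leading coefficient with approximation degree $d-m$. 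Second, and more seriously, the sub-case where all products vanish to order $\ge d$ cannot be reduced to the generic case at all, and there your claimed conclusion $\Str_q\ge T^{(1)}$ is simply false in general: each slice is then a difference $R_i-R_q$ of limits of rank-one matrices with common subtrahend, so $T^{(1)}\le\langle q\rangle$ and hence $\langle r\rangle\ge T$; for generic such data $T^{(1)}$ is not a restriction of $\Str_q$, and the only way out is to invoke the hypothesis $\langle r\rangle\not\ge T$ to exclude this branch by contradiction. This is exactly the case distinction ($\lambda+\mu$ versus $d$) at the heart of the paper's proof, and it is the only place in the $q\ge 3$ analysis where that hypothesis is used — its complete absence from your argument after the $q\le 2$ step is the symptom of the missing case. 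With the two fixes above (order-$m$ rescaling of the product curves for $m<d$, and the rank-count contradiction for $m\ge d$), your proof closes.
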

\begin{proof}
Suppose $\langle r \rangle \bgeq T$ via a partial degeneration
\begin{equation*}
    \lim_{\epsilon \rightarrow 0} \frac{1}{\epsilon^d}(A_1 \otimes A_2(\epsilon) \otimes A_3 (\epsilon)) \langle r \rangle  = T \in U_1 \otimes U_2 \otimes U_3.
\end{equation*}
Since $T$ is concise, the map $A_1$ has rank equal to $\dim (U_1) = r - 1$.
Note that $A_1$ can be factored as $A_1 = A M_q D P$ where
$A \colon \mathbb{C}^{r - 1} \to U_1$ is invertible,
$M_q\colon \mathbb{C}^{r}\rightarrow\mathbb{C}^{r-1}$ is defined as
\begin{equation*}
    M_q \colon \begin{cases}
    e_i \mapsto e_i &\text{ for } 1\leq i \leq r-1 \\
        e_r \mapsto e_1 + \dots + e_{q - 1}
    \end{cases}
\end{equation*}
with $1 \leq q \leq r$,
$D \colon \mathbb{C}^r \to \mathbb{C}^r$ is diagonal,
and $P \colon \mathbb{C}^r \to \mathbb{C}^r$ is a permutation matrix.
Indeed, suppose $\pi \in \mathfrak{S}_r$ is a permutation such that $A_1 e_{\pi (1)} \vvirg A_1 e_{\pi (r-1)}$ are linearly independent and $A_1 e_{\pi(r)} = \lambda_1 A_1 e_1 + \dots + \lambda_{q - 1} A_1 e_{q - 1}$ with nonzero $\lambda_1, \dots, \lambda_{q - 1}$.
Defining $A$ to be the map $A \colon e_i \mapsto \lambda_i A_1 e_{\pi(i)}$, $D = \operatorname{diag}(\lambda_1^{-1}, \dots, \lambda_{q-1}^{-1}, 1, \dots, 1)$, and $P$ the permutation matrix corresponding to $\pi^{-1}$, we get the required factorization.

Note that $(DP \otimes \id \otimes \id) \langle r \rangle = (\id \otimes DP^{-1} \otimes P^{-1}) \langle r \rangle$.
Now, we can rearrange the partial degeneration $(A_1 \otimes A_2(\epsilon) \otimes A_3(\epsilon)) \langle r \rangle$ as
\[
(A_1 \otimes A_2(\epsilon) \otimes A_3(\epsilon)) \langle r \rangle = (A \otimes \id \otimes \id) (\id \otimes A_2(\varepsilon) D P^{-1} \otimes A_3(\varepsilon) P^{-1}) (M_q \otimes \id \otimes \id) \langle r \rangle 
\]
This means that if $\langle r \rangle \bgeq T$, then up to a change of basis $T$ is a partial degeneration of 
\[ (M_q \otimes \id \otimes \id) \langle r \rangle = \sum_{i = 1}^{q-1} e_{i} \otimes (e_{i} \otimes e_i + e_r \otimes e_r) + \sum_{i = q}^{r - 1} e_i \otimes e_i \otimes e_i
\]
with identity map on the first factor.

Define $H_q = \sum_{i = 1}^{q-1} e_{i} \otimes (e_{i} \otimes e_i + e_q \otimes e_q) \in \mathbb{C}^{q - 1} \otimes \mathbb{C}^q \otimes \mathbb{C}^q$.
We have $(M_q \otimes \id \otimes \id) \langle r \rangle \simeq H_q \oplus \langle r - q \rangle$.
Using~\autoref{prop:nogoresult}, we see that $T = S_q + X_{r-q}$ where $S_q$ is a partial degeneration of $H_q$ and $X_{r-q}$ is a restriction of $\langle r-q \rangle$.
It remains to analyze partial degenerations of $H_q$.

So, consider a partial degeneration 
\begin{equation*}
    S_q = \lim_{\epsilon \rightarrow 0}\frac{1}{\epsilon^d}(\id \otimes B(\epsilon ) \otimes C(\epsilon ))H_q.
\end{equation*}
Define $b_i(\epsilon) = B(\epsilon) e_i$ and $c_i(\epsilon ) = C(\epsilon ) e_i$.
Suppose that $b_q(\epsilon) = b_{q, \mu} \epsilon^{\mu} + b_{q, \mu + 1} \epsilon^{\mu + 1} + \dots$. After a basis change we may assume that $b_{q, \mu} = e_q$.
Define
\[
E(\epsilon) \colon \begin{cases} e_i \mapsto e_i, & i < q \\ e_q \mapsto \epsilon^{-\mu} b_q(\epsilon) \end{cases}
\]
We have $\lim_{\epsilon \to 0} E(\epsilon) = \id$, so by changing $B(\epsilon)$ to $E(\epsilon)^{-1} B(\epsilon)$ we obtain a partial degeneration for the same tensor $S_q$ with $b_q(\epsilon) = \epsilon^{\mu} e_q$.
Using the same argument, we can assume without loss of generality that $c_q(\epsilon) = -\epsilon^\nu e_q$.
In this situation, we have
\begin{equation*}
    S_q = \lim_{\epsilon \rightarrow 0} \frac{1}{\epsilon^{d}}(\id \otimes B(\epsilon ) \otimes C(\epsilon )) H_p = \sum_{i = 1 }^{q-1} e_i \otimes \left( \frac{1}{\epsilon^{d}}b_i (\epsilon) \otimes c_i(\epsilon) - \epsilon^{\lambda + \mu - d} e_q \otimes e_q \right).
\end{equation*}
If $\lambda + \mu > d$, we clearly have 
\begin{equation*}
    S_q =\lim_{\epsilon \rightarrow 0}\sum_{i = 1 }^q e_i \otimes \left( \frac{1}{\epsilon^{d}}b_i (\epsilon) \otimes c_i(\epsilon) \right).
\end{equation*}
In this case, $S_q$ is a partial degeneration of $\langle q - 1 \rangle$ and
by~\autoref{prop:nogoresult}, we can choose the $b_i (\epsilon)$ and $c_i(\epsilon)$ constant in $\epsilon$ and obtain $\langle q-1 \rangle \geq S_q$  which yields $T \leq \langle r-1 \rangle \leq \langle r \rangle$.

If $\lambda + \mu < d$, we must have 
\begin{equation*}
    \begin{array}{c}
         b_i(\epsilon) = \epsilon^\sigma e_0 + \Tilde{b_i}(\epsilon)  \\
          c_i(\epsilon) = \epsilon^\tau e_0 + \Tilde{c_i}(\epsilon) 
    \end{array}
\end{equation*}
with $\sigma + \tau = \lambda + \mu$ so that 
\begin{equation*}
    S_q = \lim_{\epsilon \rightarrow 0} \frac{1}{\epsilon^{d}}\sum_{i = 1}^{q-1} e_i \otimes \left( \epsilon^{\sigma}\Tilde{b_i}(\epsilon) \otimes e_q + \epsilon^{\tau} e_q \otimes  \tilde{c_i}(\epsilon) \right).
\end{equation*}
For each $i = 1, \dots , q-1$, the limit 
\begin{equation*}
   e_i ^* S_q =  \lim_{\epsilon \rightarrow 0} \frac{1}{\epsilon^{d}} \left( \epsilon^{\sigma}\Tilde{b_i}(\epsilon) \otimes e_q + \epsilon^{\tau} e_q \otimes  \tilde{c_i}(\epsilon) \right) 
\end{equation*}
must exist and is of the form $b_i \otimes e_q + e_q \otimes c_i$ for some $b_i \in U_2$ and $c_i \in U_3$. Consequently, $S_q = \sum_{i = 1}^{q - 1} e_i \otimes (b_i \otimes e_q + e_q \otimes c_i)$ is a restriction of $\operatorname{Str}_q$.

Finally, consider the case $\lambda + \mu = d$. Here it holds that
\begin{align*}
    S_q &= \lim_{\epsilon \rightarrow 0} \sum_{i = 1 }^{q-1} e_i \otimes \left( \frac{1}{\epsilon^{d}}b_i (\epsilon) \otimes c_i(\epsilon) - \epsilon^{\lambda + \mu - d} e_q \otimes e_q \right) = \\
    &= \lim_{\epsilon \rightarrow 0}\frac{1}{\epsilon^{d}} \sum_{i = 1}^{q-1} e_i \otimes  (b_i (\epsilon) \otimes c_i(\epsilon)) - \left(\sum_{i = 1 }^{q-1} e_i \right) \otimes  e_q \otimes e_q
\end{align*}
In this case, $S_q$ is a partial degeneration of $\langle q \rangle$, and applying~\autoref{prop:nogoresult}, we see that $\langle q \rangle \geq S_q$ and $\langle r \rangle \geq T$. 

We obtain that the only case where $\langle r \rangle \not \geq T$ is when $T = S_q + X_{r - q}$ with $S_q \leq \operatorname{Str}_q$ and $X_{r - q} \leq \langle r - q \rangle$ for some $q$ such that $1 \leq q \leq r$.
We can exclude cases $q = 1$ and $q = 2$ because in these cases $\operatorname{Str}_q \leq \langle q \rangle$.
\end{proof} 

\section{Aided restriction and aided rank}\label{sec:Aided rank}
A related notion to partial degeneration is the notion of aided rank which we will introduce in~\autoref{subsec:Interpolation with a matrix}. There, we will also explain its relation to partial degenerations.

In~\autoref{subsec:A substitution method for aided rank}, we will present a generalization of the method to lower bound rank in~\cite{5959837} and use it in~\autoref{subsub:W} to calculate the aided rank for tensor powers of the $W$-tensor.

\subsection{Aided restriction and interpolation}\label{subsec:Interpolation with a matrix}

In this section, we will introduce the notion of aided rank and show its relation to partial degeneration. For any tensor $T \in U_1 \otimes  U_2 \otimes U_3$ recall the notation
\begin{equation*}
	T^{\thickdot p} = T \boxtimes \langle 1,1,p \rangle.
\end{equation*}

Recall the following interpolation result, which is based on ideas introduced in~\cite{BiCaLoRo:O277ComplexityApproximateMatMult}. 

\begin{theorem}\label{thm:interpolatingasusual}
Let $T\in U_1 \otimes U_2 \otimes U_3$ and $S \in V_1 \otimes V_2 \otimes V_3$ such that $T \trianglerighteq_{d}^{e} S$. Then, $T \boxtimes \langle e + 1 \rangle \geq S$ and $T \boxtimes \langle 2d + 1 \rangle \geq S$. 
\end{theorem}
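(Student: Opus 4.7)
I would prove both bounds from the same underlying Vandermonde interpolation idea, and in fact the second bound will follow from the first as an easy corollary.

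\textbf{Bound via the error degree.} The plan is to evaluate the polynomial degeneration at $e+1$ distinct nonzero scalars $\eps_0 \vvirg \eps_e$, package the resulting $e+1$ restrictions into a single restriction of $T \boxtimes \langle e+1 \rangle$, and then use Vandermonde interpolation to extract $S$. Concretely, writing $S_0 = S$ for convenience, each evaluation yields
\[
(A_1(\eps_j) \otimes A_2(\eps_j) \otimes A_3(\eps_j))T = \sum_{k=0}^{e} \eps_j^{d+k} S_k.
\]
I will define maps $B_i \colon U_i \otimes \bbC^{e+1} \to V_i$ by $B_i(u \otimes e_j) = c_{i,j} A_i(\eps_j)(u)$ for scalars $c_{i,j}$ to be chosen. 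Using the basis presentation of $\langle e+1 \rangle$, a direct computation gives
\[
(B_1 \otimes B_2 \otimes B_3)(T \boxtimes \langle e+1\rangle) = \sum_{j=0}^{e} \gamma_j \sum_{k=0}^{e} \eps_j^{d+k} S_k,
\]
with $\gamma_j := c_{1,j}c_{2,j}c_{3,j}$. Choosing $\gamma_0 \vvirg \gamma_e$ as the unique solution to the Vandermonde system $\sum_j \gamma_j \eps_j^{d+k} = \delta_{k,0}$ for $k = 0 \vvirg e$ produces the required restriction. The factorization $\gamma_j = c_{1,j}c_{2,j}c_{3,j}$ is trivial: set $c_{1,j} = \gamma_j$, $c_{2,j} = c_{3,j} = 1$.

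\textbf{Bound via the approximation degree.} For this I would reduce to the previous case by truncation. Expand $A_i(\eps) = \sum_{k \geq 0} A_{i,k}\eps^k$ and replace each $A_i(\eps)$ by its truncation $\tilde A_i(\eps) = \sum_{k=0}^{d} A_{i,k}\eps^k$. The coefficient of $\eps^m$ in $\prod_i A_i(\eps) T$ involves only triples $(k_1,k_2,k_3)$ with $k_1+k_2+k_3=m$, so for $m \leq d$ every contributing $A_{i,k_i}$ satisfies $k_i \leq d$ and is retained by the truncation. Consequently, the truncated product satisfies
\[
(\tilde A_1(\eps) \otimes \tilde A_2(\eps) \otimes \tilde A_3(\eps))T = \eps^d S + \sum_{k=1}^{2d} \eps^{d+k} R_k,
\]
since the product of three polynomials of degree at most $d$ has degree at most $3d$. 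This exhibits a degeneration $T \trianglerighteq_{d}^{2d} S$ for which I can apply the previous bound with $e = 2d$, giving $T \boxtimes \langle 2d+1 \rangle \geq S$.

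\textbf{Expected obstacle.} There is no deep obstacle here; the only subtlety is keeping track that low-order coefficients of $\prod_i A_i(\eps) T$ vanish both before and after truncation, so that in the truncated expression the error really begins at degree $d+1$ and the Vandermonde system has exactly $2d+1$ nontrivial constraints rather than $3d+1$. Once this bookkeeping is done, the entire argument is a standard polynomial interpolation on a direct sum of tensor blocks.
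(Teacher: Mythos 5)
Your argument is correct: the Vandermonde system $\sum_j \gamma_j \eps_j^{d+k} = \delta_{k,0}$ is solvable because its matrix factors as an invertible Vandermonde matrix times $\mathrm{diag}(\eps_j^d)$ with the $\eps_j$ distinct and nonzero, and the truncation step for the $2d+1$ bound is sound since a degeneration has no coefficients below $\eps^d$, so the truncated product is again a degeneration $T \trianglerighteq_d^{2d} S$. The paper states this theorem as a known result without proof, and your evaluation-plus-interpolation argument (with truncation for the approximation-degree bound) is essentially the same technique the paper itself uses in the proof of \autoref{prop:interpolate}, so there is nothing to correct.
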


We start by observing that one can use a unit matrix instead of a unit tensor to interpolate degenerations. We use notation from matrix multiplication in order to write this matrix as $\langle 1,1,p \rangle$ where $p$ is the rank of the unit matrix.

\begin{lemma}\label{lem:interpolatewithmatrixanydeg}
Consider tensors $T \in U_1 \otimes U_2 \otimes U_3$ and $S \in V_1 \otimes V_2 \otimes V_3$ and assume $T\trianglerighteq S$. Then,
\begin{equation*}
T^{\thickdot   u_3 v_3 }\geq S.
\end{equation*}
\end{lemma}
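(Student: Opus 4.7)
The plan is to construct an explicit restriction of $T^{\thickdot u_3 v_3}$ to $S$, exploiting the fact that $\Hom(U_3, V_3)$ has dimension exactly $u_3 v_3$.

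From the hypothesis $T \trianglerighteq S$ I would first extract polynomial maps $A_i(\eps) \colon U_i \to V_i$ with
\[
(A_1(\eps) \otimes A_2(\eps) \otimes A_3(\eps)) T = \eps^d S + \eps^{d+1} S_1 + \cdots + \eps^{d+e} S_e.
\]
The core observation driving the plan is the following claim: for any fixed linear maps $A' \colon U_1 \to V_1$, $B' \colon U_2 \to V_2$, and $C' \colon U_3 \to V_3$, one has $T^{\thickdot u_3 v_3} \geq (A' \otimes B' \otimes C') T$ as a restriction. To see this, fix a basis $E_1, \ldots, E_{u_3 v_3}$ of $\Hom(U_3, V_3)$ and define $C \colon U_3 \otimes \bbC^{u_3 v_3} \to V_3$ by $C(u \otimes e_i) = E_i u$. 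Expanding $C' = \sum_i \gamma_i E_i$ with $\gamma_i \in \bbC$ and setting $B \colon U_2 \otimes \bbC^{u_3 v_3} \to V_2$ via $B(u \otimes e_i) = \gamma_i B'(u)$, a direct computation gives $(A' \otimes B \otimes C) T^{\thickdot u_3 v_3} = \sum_i (A' \otimes \gamma_i B' \otimes E_i) T = (A' \otimes B' \otimes C') T$.

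Next I would apply this construction with the polynomial maps $A_1(\eps), A_2(\eps), A_3(\eps)$ substituted for the fixed $A', B', C'$. Since the basis $\{E_i\}$ does not depend on $\eps$, the resulting third-factor map $C$ is constant, while the first- and second-factor maps depend polynomially on $\eps$ and reproduce $(A_1(\eps) \otimes A_2(\eps) \otimes A_3(\eps))T$ for each $\eps$. This exhibits $T^{\thickdot u_3 v_3} \bgeq S$, with the constant map on the third factor (the mirror image of the convention in~\autoref{def:partialdeg}).

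The hard part will be promoting this partial degeneration to a genuine restriction $T^{\thickdot u_3 v_3} \geq S$ without enlarging the aiding dimension. A direct appeal to the partial-degeneration-to-restriction statements that will appear later in~\autoref{subsec:Interpolation with a matrix} would multiply the aiding dimension by $e+1$, which would not yield the claimed bound $u_3 v_3$. My expectation is that the proof instead uses the extra freedom already present in the second-factor aiding $\bbC^{u_3 v_3}$ built in the first step, performing a Lagrange-type cancellation of the error terms $S_1, \ldots, S_e$ inside the existing aiding factors. The key obstacle is showing that a single constant first-factor map $A \colon U_1 \to V_1$, together with a suitable choice of the maps $B_i \colon U_2 \to V_2$ indexed by $i = 1, \ldots, u_3 v_3$, can simultaneously realize the leading term $S$ and cancel all the error terms $S_1, \ldots, S_e$; this is where the identification of the aiding dimension with $\dim \Hom(U_3, V_3)$ should play a decisive role.
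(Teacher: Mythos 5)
Your opening construction is correct as far as it goes: for fixed maps $A',B',C'$ the tensor $(A'\otimes B'\otimes C')T$ is a restriction of $T^{\thickdot u_3v_3}$, and substituting the curves $A_1(\eps),A_2(\eps),A_3(\eps)$ (expanding $A_3(\eps)=\sum_i\gamma_i(\eps)E_i$ in the fixed basis of $\Hom(U_3,V_3)$) does exhibit $T^{\thickdot u_3v_3}\bgeq S$ with a constant map on the third factor. But that is where the argument stops, and the lemma claims a restriction. The step you defer to a ``Lagrange-type cancellation of the error terms inside the existing aiding factors'' is precisely the content of the lemma, and nothing in the proposal shows such a cancellation exists: a partial degeneration is in general strictly weaker than a restriction (this is the main theme of the paper), and the interpolation results you allude to (\autoref{prop:interpolate}) would multiply the aiding rank by $d+1$ or $e+1$ rather than keep it at $u_3v_3$. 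So as written the proposal establishes only $T^{\thickdot u_3v_3}\bgeq S$; the conversion to $T^{\thickdot u_3v_3}\geq S$ is a genuine gap, not a technicality.

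For comparison, the paper's proof never manipulates error terms. It uses the $u_3$-level aiding pair to project $T^{\thickdot u_3}$ onto the bipartite flattening $\tilde T\in U_1\otimes(U_2\otimes U_3)$, invokes the fact that restriction and degeneration coincide for tensors on two factors to get $\tilde T\geq\tilde S$, and then uses a $v_3$-level pair to recover $S$ from $\tilde S$; composing gives $T^{\thickdot u_3v_3}\geq S$. Your setup could be repaired in the same spirit: with aiding rank $u_3v_3$ the pair $(M_2,M_3)$ can implement an \emph{arbitrary} joint map $\Phi\in\Hom(U_2\otimes U_3,V_2\otimes V_3)\cong\Hom(U_2,V_2)\otimes\Hom(U_3,V_3)$, since any such $\Phi$ is a sum of at most $u_3v_3$ product maps $B_l\otimes C_l$; the existence of a constant pair $(A,\Phi)$ with $(A\otimes\Phi)\tilde T=\tilde S$ is then exactly the bipartite degeneration-to-restriction fact. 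The missing ingredient in your argument is this bipartite equivalence (or something of equal strength), not extra freedom in the second-factor aiding.
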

\begin{proof}
Fixing bases of the vector spaces involved, write
\begin{equation*}
	T^{ \thickdot u_3} = \sum_{i,j,k,l} T_{i,j,k} e_i \otimes (e_j \otimes e_l ) \otimes (e_k \otimes e_l )
\end{equation*}
where $i = 1, \dots, u_1$, $j = 1, \dots, u_2$ and $k,l = 1, \dots ,  u_3$.

Letting $\Pi_{3}:U_3 \otimes U_3 \rightarrow \mathbb{C}$ be the linear functional that maps $e_j \otimes e_l$ to 1 if $j = l$ and $0$ otherwise, we see by applying $\Pi_{U_3}$ on the third tensor factor that 
\begin{equation*}
	T^{\thickdot u_3} \geq \sum_{i,j,k} t_{i,j,k} e_i \otimes (e_j \otimes e_k) = \tilde{T}.
\end{equation*}

The tensor $\tilde{T}$ is the tensor $T$ seen as a bipartite tensor in $U_1 \otimes (U_2 \otimes U_3)$. We can also interpret $S$ as bipartite tensor $\tilde{S} \in V_1 \otimes (V_2 \otimes V_3)$. By assumption $\tilde{T}\trianglerighteq \tilde{S}$. In fact, since degeneration and restriction are equivalent for tensors on two factors, we deduce $\tilde{T}\geq \tilde{S}$. 

Again fixing bases, we have 
\begin{equation*}
	\tilde{S}^{\thickdot v_3} = \sum_{i,j,k,l} S_{i,j,k} e_i \otimes (e_j \otimes e_k \otimes  e_l ) \otimes  e_l.
\end{equation*}

As before, we can now define $\Pi_{V_3}:V_3 \otimes V_3 \rightarrow \mathbb{C}$ which maps $e_k \otimes e_l$ to 1 if $k = l$ and $0$ otherwise.
Applying $\id\otimes \Pi_{V_3}$ to the second tensor factor we see $ \tilde{S}^{\thickdot v_3}\geq S$. 

After all, we have seen 
\begin{equation*}
	T^{\thickdot (u_3 \cdot v_3)}= \left( T^{\thickdot u_3} \right)^{\thickdot v_3} \geq  \tilde{T}^{\thickdot v_3}\geq \tilde{S}^{\thickdot v_3}\geq S.
\end{equation*}
\end{proof}
\begin{remark}
The proof technique for~\autoref{lem:interpolatewithmatrixanydeg} is inspired from the physical interpretation of tensors: Considering the tensors as three party quantum states, we used two EPR-pairs to \emph{teleport} the \emph{particle} at the third party to the second party and back.
\end{remark}

The main question we ask is for a degeneration $T \trianglerighteq S$, how big must $p$ be such that $T^{\thickdot p} \geq S$. We will see that the minimal $p$ necessary to turn the degeneration into a restriction $T^{\thickdot p}\geq S$ can be chosen drastically smaller if the degeneration is a partial degeneration. On the other hand, we will calculate $p$ precisely for the degeneration $\langle 2^k \rangle \trianglerighteq W^{\boxtimes k}$ where we know from~\autoref{prop:nogoresult} that no partial degeneration exists. As it will turn out, here the minimal $p$ differs from the naive bound in~\autoref{lem:interpolatewithmatrixanydeg} only by a factor of $\frac{1}{2}$. To simplify further discussions, let us introduce the following definition.

\begin{definition}\label{def:aidedrank}
Let $S \in V_1 \otimes V_2 \otimes V_3$ and fix $p \geq 1$. We define the $p$-th \emph{aided rank} of $S$ as 
\begin{equation*}
R^{\thickdot p}(T) = \min\lbrace r \colon \langle r \rangle^{\;\thickdot p} \geq S\rbrace.
\end{equation*}
\end{definition}

Clearly, we have $R^{\thickdot 1}(T) = R(T)$.~\autoref{lem:interpolatewithmatrixanydeg} shows that $\underline{R}(S) =r$ implies that there is some $q$ such that $R^{\thickdot p}(S) = r$. To find better bounds on the minimal $p$, we now show a variation of~\autoref{thm:interpolatingasusual}.

\begin{proposition}\label{prop:interpolate}
Let $T \in U_1 \otimes U_2 \otimes U_3$ and $S \in V_1 \otimes V_2 \otimes V_3$ and assume $T \bgeq_d^e S$. Then, 
\begin{equation*}
T^{\thickdot d+1} \geq S \text{ and } T^{\thickdot e+1} \geq S.
\end{equation*}
\end{proposition}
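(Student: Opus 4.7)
The plan is to prove both bounds by an explicit restriction construction that uses the identity-matrix structure of $\langle 1,1,p \rangle$ to ``glue'' several $\eps$-dependent maps on factors $2$ and $3$ into a single restriction of $T^{\thickdot p}$, while keeping the constant map $A_1$ untouched on factor $1$. Starting from the partial degeneration data $A_1, A_2(\eps) = \sum_{i \geq 0} A_{2,i}\eps^i, A_3(\eps) = \sum_{j \geq 0} A_{3,j}\eps^j$ satisfying
\[
(A_1 \otimes A_2(\eps) \otimes A_3(\eps))T = \eps^d S + \eps^{d+1} S_1 + \dots + \eps^{d+e} S_e,
\]
I set $B_1 = A_1$ in both constructions. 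For each $p$ I define linear maps $B_2 \colon U_2 \otimes \bbC^p \to V_2$ and $B_3 \colon U_3 \otimes \bbC^p \to V_3$ by their action on standard basis vectors, and verify $(B_1 \otimes B_2 \otimes B_3)T^{\thickdot p} = S$. The key computation, which I will record once, is that from $(T^{\thickdot p})_{i,(j,l),(k,l')} = T_{ijk}\delta_{ll'}$ one immediately gets
\[
(B_1 \otimes B_2 \otimes B_3) T^{\thickdot p} = \sum_{l=1}^{p} (A_1 \otimes B_2(\cdot \otimes e_l) \otimes B_3(\cdot \otimes e_l))T,
\]
so everything reduces to choosing the two families of maps on factors $2$ and $3$ indexed by $l$.

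For the bound $T^{\thickdot d+1} \geq S$ I match the coefficient of $\eps^d$ on both sides of the partial degeneration. Expanding the left-hand side yields $\sum_{i,j} \eps^{i+j}(A_1 \otimes A_{2,i} \otimes A_{3,j})T$, so
\[
S = \sum_{i+j = d} (A_1 \otimes A_{2,i} \otimes A_{3,j})T.
\]
Taking $p = d+1$ and setting $B_2(u \otimes e_l) = A_{2,l-1}\,u$ and $B_3(u' \otimes e_l) = A_{3,d-l+1}\,u'$ for $l = 1, \ldots, d+1$, the above gluing formula gives $\sum_{l=1}^{d+1}(A_1 \otimes A_{2,l-1} \otimes A_{3,d-l+1})T$, which is exactly the sum over pairs $(i,j)$ with $i+j=d$ and therefore equals $S$.

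For the bound $T^{\thickdot e+1} \geq S$ I use Lagrange interpolation. Define the polynomial $g(\eps) = S + \eps S_1 + \dots + \eps^e S_e$, which has degree at most $e$ and satisfies $\eps^d g(\eps) = (A_1 \otimes A_2(\eps) \otimes A_3(\eps))T$. Choosing $e+1$ distinct nonzero scalars $\eps_1, \ldots, \eps_{e+1}$ and the corresponding Lagrange-at-zero coefficients $c_l = \prod_{j \neq l} \frac{-\eps_j}{\eps_l - \eps_j}$, we recover
\[
S = g(0) = \sum_{l=1}^{e+1} c_l \, \eps_l^{-d} (A_1 \otimes A_2(\eps_l) \otimes A_3(\eps_l))T.
\]
Taking $p = e+1$ and setting $B_2(u \otimes e_l) = c_l \eps_l^{-d} A_2(\eps_l)\,u$ and $B_3(u' \otimes e_l) = A_3(\eps_l)\,u'$, the gluing formula collapses the inner tensor product onto a single index $l$ and produces exactly this expression, so $(B_1 \otimes B_2 \otimes B_3)T^{\thickdot e+1} = S$.

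The only non-routine point is the first observation that the diagonal pairing $\sum_l e_l \otimes e_l$ inside $\langle 1,1,p \rangle$ is what couples the two $\bbC^p$ indices and turns a bilinear choice $(B_2(\cdot \otimes e_l), B_3(\cdot \otimes e_{l'}))$ into a single-index sum; once this is in hand, both bounds reduce respectively to reading off the $\eps^d$ coefficient of a polynomial identity, and to standard Lagrange interpolation. The main conceptual content is precisely what made partial degeneration useful in the first place: because $A_1$ is already constant in $\eps$, no interpolation is needed on the first factor, which is exactly why an aiding matrix $\langle 1,1,p\rangle$ (trivial on the first factor) suffices, and why the bounds are linear in $d$ and $e$ rather than involving a unit tensor as in \autoref{thm:interpolatingasusual}.
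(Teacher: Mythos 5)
Your proposal is correct and follows essentially the same route as the paper's proof: the $T^{\thickdot d+1}$ bound by extracting the coefficient of $\eps^d$ (i.e.\ $S = \sum_{i+j=d}(A_1\otimes A_{2,i}\otimes A_{3,j})T$) and the $T^{\thickdot e+1}$ bound by Lagrange interpolation at $e+1$ nonzero points, in both cases assembling the $\eps$-dependent maps on factors $2$ and $3$ into single restriction maps on $U_2\otimes\bbC^p$ and $U_3\otimes\bbC^p$ while keeping $A_1$ fixed. Your explicit ``gluing formula'' for $T^{\thickdot p}$ is just the unpacked form of the maps $\sum_i A_{2,i}\otimes e_i^*$ etc.\ used in the paper.
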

\begin{proof}
Suppose that the partial degeneration is given by
	\begin{equation}\label{eq:somedeg}
    \left(A_1 \otimes A_2(\epsilon) \otimes A_3(\epsilon)\right)T = \epsilon^d S + \sum_{i = 1}^e \epsilon^{d + i} S_i.
\end{equation}
Powers of $\epsilon$ higher than $d$ have no effect on the coefficient of $\eps^d$; hence without loss of generality, we may assume  
\begin{equation*}
	A_2(\epsilon) = \sum_{i = 0}^{d} \epsilon^i A_{2,i},\;\;\; A_3(\epsilon) = \sum_{i = 0}^{d} \epsilon^i A_{3,i}.
\end{equation*}
We then observe 
\begin{equation*}
	S = A_1 \otimes \left(\sum_{i = 0}^{d} A_{2,i} \otimes A_{3,d - i} \right) T
\end{equation*}
and therefore 
\begin{equation*}
	A_1 \otimes \left(\sum_{i = 0}^{d} A_{2,i} \otimes e_i^* \right) \otimes \left(\sum_{i = 0}^{d} A_{3,d-i} \otimes e_i^* \right) T^{\thickdot d+1} = S
\end{equation*}
which shows $T^{\thickdot d+1}\geq S$. 

In order to see $T^{\thickdot e+1}\geq S$, note that for $\epsilon > 0$, we can rewrite~\autoref{eq:somedeg} as 
\begin{equation*}
\left(A_1 \otimes (A_2(\epsilon)/\epsilon^d)\otimes A_3(\epsilon)\right)T = S + \epsilon S_1 + \dots + \epsilon^e S_e =\colon q(\epsilon).
\end{equation*}
Using Lagrangian interpolation we can pick $\alpha_0, \dots , \alpha_e \neq 0$ such that
\begin{equation*}
    q(\epsilon) = \sum_{j = 0}^{e} q(\alpha_j) \prod_{m \neq j} \frac{\epsilon - \alpha_m}{\alpha_j - \alpha_m}.
\end{equation*}
By writing $\mu_j \colon = \prod_{m \neq j}\frac{\alpha_m}{\alpha_m - \alpha_j}$, we therefore get $S = q(0) = q(\alpha_0)\mu_0 + \dots + q(\alpha_e)\mu_e$.
Note that the $q(\alpha_j)$ are all restrictions of $T$ where the first restriction map can be chosen to be $A_1$. With that, 
\begin{equation*}
	S = q(0) =\left(A_1\otimes ( \sum_{j = 0}^{e} \frac{\mu_j}{\alpha_j^d}A_2(\alpha_j) \otimes e_j^*) \otimes (\sum_{j = 0}^{e} A_3(\alpha_j) \otimes e_j^*)\right) T^{\thickdot e+1}
\end{equation*}
which finishes the proof.
\end{proof}

In particular, we can exclude partial degeneration with a certain approximation degree if we give lower bounds on the aided rank of a tensor. Note that in the case of prehomogeneous spaces, we can find an even better bound.

\begin{proposition}\label{prop:denseorbitonlyneedsaidtwo}
Assume that $U_1 \otimes U_2 \otimes U_3$ is prehomogeneous under the action of $\text{GL}(U_2)\times \text{GL}(U_3)$ and let $T$ be an element with dense orbit. Then, for all $S \in U_1 \otimes U_2 \otimes U_3$ it holds that
\begin{equation*}
T^{\thickdot 2} \geq S.
\end{equation*}
\end{proposition}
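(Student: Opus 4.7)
My plan is to obtain the restriction directly from the differential of the orbit map at a generic point: that differential already has a ``rank-two'' structure in its action on $T$, and this is exactly what an aiding matrix $\langle 1,1,2\rangle$ is built to accommodate.

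Concretely, I would consider the orbit map $\phi \colon G \to U_1 \otimes U_2 \otimes U_3$ with $G = \GL(U_2) \times \GL(U_3)$, defined by $\phi(g_2, g_3) = (\id_{U_1} \otimes g_2 \otimes g_3) T$. By hypothesis $\phi$ is dominant, so generic smoothness (valid over $\mathbb{C}$) yields a point $g = (g_2, g_3) \in G$ at which $d\phi_g$ is surjective onto $U_1 \otimes U_2 \otimes U_3$. Differentiating along one-parameter subgroups, one sees that
\[
d\phi_g(X_2, X_3) = (\id_{U_1} \otimes g_2 X_2 \otimes g_3)\, T + (\id_{U_1} \otimes g_2 \otimes g_3 X_3)\, T
\]
for every $(X_2, X_3) \in \mathfrak{gl}(U_2) \oplus \mathfrak{gl}(U_3)$.

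Given an arbitrary $S \in U_1 \otimes U_2 \otimes U_3$, surjectivity of $d\phi_g$ supplies $(X_2, X_3)$ with $d\phi_g(X_2, X_3) = S$. Setting $B_2 = g_2 X_2$, $B_3 = g_3$, $C_2 = g_2$, $C_3 = g_3 X_3$, this reads
\[
S = (\id_{U_1} \otimes B_2 \otimes B_3)\, T + (\id_{U_1} \otimes C_2 \otimes C_3)\, T,
\]
so $S$ is a sum of two restrictions of $T$ sharing the same first-factor map. I then translate this into a restriction from $T^{\thickdot 2} = T \boxtimes \langle 1,1,2\rangle$: taking $A_1 = \id_{U_1}$ together with $A_2(u \otimes e_1) = B_2 u$, $A_2(u \otimes e_2) = C_2 u$, and the analogous $A_3$ built from $B_3, C_3$, expanding $(A_1 \otimes A_2 \otimes A_3)\, T^{\thickdot 2}$ along the diagonal of the $\langle 1,1,2 \rangle$ factor reproduces the displayed sum and gives $T^{\thickdot 2} \geq S$.

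Technically no step is hard; the conceptual point, and the only thing worth double-checking, is that the tangent space at a generic point of a dense $G$-orbit is intrinsically a sum of exactly two elementary deformations of the form $gX \otimes g$ and $g \otimes gX$, matched perfectly in size by a single rank-two aiding matrix. This is also why the argument cannot be compressed further to $T^{\thickdot 1} = T \geq S$ unless $S$ already lies in the orbit itself rather than only its closure.
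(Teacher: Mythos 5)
Your argument is correct, but it reaches the key decomposition by a genuinely different route than the paper. Both proofs end the same way: once $S$ is written as a linear combination of two tensors of the form $(\id_{U_1}\otimes\,\cdot\,\otimes\,\cdot\,)T$, the same bookkeeping with $A_2 = B_2\otimes e_1^* + C_2\otimes e_2^*$, $A_3 = B_3\otimes e_1^* + C_3\otimes e_2^*$ gives $T^{\thickdot 2}\geq S$. The paper produces the two terms \emph{globally}: it takes the affine line through $T$ and $S$, notes that the complement of the open dense orbit meets this line in finitely many points, picks a second orbit point $\tilde T=(\id\otimes M_2\otimes M_3)T$ on the line, and writes $S=\lambda T+\mu\tilde T$. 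You produce them \emph{infinitesimally}: the differential of the orbit map $\phi(g_2,g_3)=(\id\otimes g_2\otimes g_3)T$ is surjective at some $g$ (generic smoothness in characteristic zero), and a tangent vector hitting $S$ is by construction a sum of the two elementary deformations $(\id\otimes g_2X_2\otimes g_3)T$ and $(\id\otimes g_2\otimes g_3X_3)T$. Your computation of $d\phi_g$ and the final restriction are both right; in fact you can dispense with generic smoothness altogether, since the image of $d\phi$ at the identity is the tangent space to the orbit at $T$, which is the whole space because a dense orbit is open, so surjectivity holds at every point of $G$. The trade-off: your tangent-space argument avoids any genericity choice of auxiliary points but leans on characteristic zero (separability of the orbit map), whereas the paper's secant-line argument works over any infinite field and, more importantly, is the template that the paper then generalizes to non-dense orbits in~\autoref{prop:hypersurface} and~\autoref{prop:furtherthanhypersurface}, where lines or linear spaces are intersected with the orbit closure $X_T$; the infinitesimal picture does not extend to those statements, since there the relevant points of $X_T$ near $S$ are not first-order deformations of a single group element. (Your closing remark that $T\geq S$ would force $S$ into the orbit is only accurate for concise $S$, via \autoref{lem:concisesameorbit}; for non-concise $S$ a restriction with singular maps is not excluded by this reasoning, though this aside does not affect your proof.)
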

\begin{proof}
Consider the affine degree-1 curve $L$ parameterized by $L(\epsilon) = T + \epsilon (S - T)$. It is clear that both $T$ and $S$ lie on $L$. Clearly, the linear span of any two distinct points on $L$ contains all points on $L$. The orbit of $T$ is open in $U_1 \otimes U_2 \otimes U_3$, therefore the intersection of $L$ and the complement of the orbit of $T$ is a closed subset of $L$, that is, a finite collection of points. Hence, there exists a second point $\tilde{T}$ in the orbit of $T$ on $L$. Writing $\tilde{T} = (\text{id}\otimes M_2 \otimes M_3)T$, and $S = \lambda T + \mu \tilde{T}$, we observe
\begin{equation*}
	\left[\id \otimes (\lambda \id \otimes e_1^* + \mu M_2 \otimes e_2^*) \otimes ( \id \otimes e_1^* +  M_3 \otimes e_2^*)\right] T^{\thickdot 2} = S
\end{equation*}
which proves the claim.
\end{proof}
Note that~\autoref{prop:denseorbitonlyneedsaidtwo} supports the intuition that in the case of partial degenerations, the minimal aiding rank $q$ turning it into a restriction is small. In~\autoref{subsec:Matrix pencils}, we saw that whenever $T \in U_1 \otimes U_2 \otimes U_3$ has a dense orbit under the action of $\GL(U_2)\times \GL(U_3)$ it holds for all $S \in U_1 \otimes U_2 \otimes U_3$ that $T \bgeq S$. 

We prove two variants of~\autoref{prop:denseorbitonlyneedsaidtwo} in the case where the $\GL(U_2) \times \GL(U_3)$ is not dense. For that, we use an argument introduced in \cite{ChrGesJen:BorderRankNonMult}, where one exploits the genericity of certain linear spaces intersecting the orbit to reconstruct elements on the linear space. In the proofs of~\autoref{prop:hypersurface} and~\autoref{prop:furtherthanhypersurface} we use some basic notation and results from algebraic geometry, we refer to~\cite{Harris:AlgGeo} for details. In particular, we refer to~\cite[Remark 4.4]{ChrGesJen:BorderRankNonMult} for a characterization of the degree of an algebraic variety in terms of intersection multiplicity for a generic line. 
\begin{proposition}\label{prop:hypersurface}
Let $T \in U_1 \otimes U_2 \otimes U_3$; let $\Omega_T = (\GL(U_2) \times \GL(U_3)) \cdot [T] \subseteq \bbP(U_1 \otimes U_2 \otimes U_3)$ be the orbit of the point $[T]$ and let $X_T = \bar{\Omega_T}$ be its Zariski-closure. Suppose $X_T$ is a hypersurface in $\bbP(U_1 \otimes U_2 \otimes U_3)$. Let $S \in U_1 \otimes U_2 \otimes U_3$ be an element such that $\mult_{X_T}([S]) \leq \deg(X_T) - 2$. Then
\[
T^{\thickdot 2} \geq S.
\]
\end{proposition}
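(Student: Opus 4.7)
The plan is to imitate the strategy of~\autoref{prop:denseorbitonlyneedsaidtwo} by producing two distinct points $T', T'' \in U_1 \otimes U_2 \otimes U_3$ with $[T'], [T''] \in \Omega_T$ and $[S]$ lying on the projective line through $[T']$ and $[T'']$. Once such $T'$, $T''$ are in hand, I would write $T' = (\id \otimes N'_2 \otimes N'_3)T$ and $T'' = (\id \otimes N''_2 \otimes N''_3)T$ with $N'_i, N''_i$ invertible, choose scalars $\lambda, \mu \in \bbC$ so that $S = \lambda T' + \mu T''$, and realize the restriction $T^{\thickdot 2} \geq S$ via $A_1 = \id_{U_1}$, $A_2 = \lambda N'_2 \otimes e_1^* + \mu N''_2 \otimes e_2^*$, and $A_3 = N'_3 \otimes e_1^* + N''_3 \otimes e_2^*$. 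The verification that $(A_1 \otimes A_2 \otimes A_3)T^{\thickdot 2} = S$ is then the same direct computation as in the proof of~\autoref{prop:denseorbitonlyneedsaidtwo}.

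To produce the pair $(T', T'')$, I would consider the pencil of projective lines in $\bbP(U_1 \otimes U_2 \otimes U_3)$ through $[S]$ and apply Bezout to the hypersurface $X_T$: a line $L$ not contained in $X_T$ satisfies $\#(L \cap X_T) = \deg(X_T)$ counted with multiplicity. For a generic $L$ through $[S]$, the intersection multiplicity at $[S]$ equals exactly $\mult_{X_T}([S])$ (this is the content of \cite[Remark 4.4]{ChrGesJen:BorderRankNonMult}), and the remaining $\deg(X_T) - \mult_{X_T}([S])$ intersection points are distinct smooth points of $X_T$ where $L$ meets $X_T$ transversally. Because $\Omega_T$ is open and dense in $X_T$, the complement $X_T \setminus \Omega_T$ is a proper closed subvariety of strictly smaller dimension, so for generic $L$ these additional intersection points all lie in $\Omega_T$. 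The hypothesis $\mult_{X_T}([S]) \leq \deg(X_T) - 2$ guarantees at least two such further points, yielding the required $T', T''$.

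The main obstacle is making the genericity argument precise. In the pencil of lines through $[S]$, which is a projective space of dimension $\dim \bbP(U_1 \otimes U_2 \otimes U_3) - 1$, one must verify that the lines which either meet $X_T$ non-transversally away from $[S]$ or hit $X_T \setminus \Omega_T$ away from $[S]$ form a proper closed subset. This is a standard incidence-dimension count: the singular locus of $X_T$ and the sublocus $X_T \setminus \Omega_T$ each have strictly smaller dimension than $X_T$, so the lines through $[S]$ that sweep them out form a codimension-at-least-one subvariety of the pencil. Granted this, a generic line in the pencil furnishes the two points $T', T''$, and the explicit restriction computation sketched in the first paragraph completes the argument.
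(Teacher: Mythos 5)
Your proposal is correct and follows essentially the same route as the paper: a generic line through $[S]$ meets $X_T$ with multiplicity $\mult_{X_T}([S])$ at $[S]$ and in $\deg(X_T)-\mult_{X_T}([S])\geq 2$ further distinct points which, by a genericity/dimension argument (the paper invokes Bertini's theorem and the cited remark), lie in $\Omega_T$; two such points span the line, and the explicit restriction of $T^{\thickdot 2}$ with $M_1=\id_{U_1}$ and the group elements tensored with $e_1^*, e_2^*$ is exactly the paper's construction. No gaps.
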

\begin{proof}
Let $ m = \mult_{X_T}([S])$. In particular $m = 0$ if $[S] \notin X_T$. Let $L$ be a generic line through $[S]$. The genericity assumption guarantees that $X_T \cap L$ is a $0$-dimensional scheme of degree $\deg(X_T)$; by Bertini's Theorem  \cite[Sec. 1.1, p.137]{GrifHar:PrinciplesAlgebraicGeometry} this scheme has a component of degree $m$ supported at $[S]$ and $\deg(X_T) - m$ distinct points. Moreover, by genericity $L \cap ( X_T \setminus \Omega_T)$ is either empty or it contains the single point $[S]$, therefore all intersection points in $L \cap X_T$, except possibly $S$, belong to $\Omega_T$.

By assumption $\deg(X_T) - m \geq 2$, so there exist two distinct points $[T_1],[T_2] \neq S$ in $L \cap \Omega_T$. Since $[T_1],[T_2]$ are distinct, they span the line $L$; therefore there exist scalars $\lambda_1,\lambda_2 \in \bbC$ such that $S = \lambda_1 T_1 + \lambda_2 T_2$. Since $[T_1],[T_2] \in \Omega_T$, there exist $g_2^{(1)} \otimes g_3^{(1)},g_2^{(2)} \otimes g_3^{(2)} \in \GL(U_2) \times \GL(U_3)$ such that $T_i = g_2^{(i)} \otimes g_3^{(i)} T$.

Consider the restriction of $T^{\thickdot 2} = T \boxtimes \langle 1,1,2\rangle$ to $U_1 \otimes U_2 \otimes U_3$ defined by the maps
\begin{align*}
M_1 &= \id_{U_1}  : U_1 \to U_1 \\
M_2 &= \lambda_1 g_2^{(1)} \otimes e_1^* + \lambda_2 g_2^{(2)} \otimes e_2^* : U_2 \otimes \bbC^2 \to U_2 \\
M_3 &= g_3^{(1)} \otimes e_1^* + g_2^{(2)} \otimes e_2^* : U_3 \otimes \bbC^2 \to U_3.
\end{align*}
It is immediate that $M_1 \otimes M_2 \otimes M_3 ( T^{\thickdot 2}) = S$.
\end{proof}

\begin{proposition}\label{prop:furtherthanhypersurface}
Let $T \in U_1 \otimes U_2 \otimes U_3$ be concise in the first factor; let $\Omega_T = (\GL(U_2) \times \GL(U_3)) \cdot [T] \subseteq \bbP(U_1 \otimes U_2 \otimes U_3)$ be the orbit of the point $[T]$ and let $X_T = \bar{\Omega_T}$ be its Zariski-closure. Let $c = \codim X_T$ be the codimension of $X_T$ in $\bbP(U_1 \otimes U_2 \otimes U_3)$. Let $S \notin X_T$. Then
\[
T^{\thickdot (c+1)} \geq S.
\]
\end{proposition}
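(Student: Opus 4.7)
The plan is to extend the argument of \autoref{prop:hypersurface} from lines to $c$-dimensional projective subspaces. The guiding idea is that a sufficiently generic $c$-plane $L \subset \bbP(U_1 \otimes U_2 \otimes U_3)$ through $[S]$ will meet $X_T$ in at least $c+1$ distinct points, all contained in $\Omega_T$ and in linearly general position in $L$. From any $c+1$ such points, the point $[S]$ can be written as a linear combination, and this combination assembles into a restriction of $T^{\thickdot(c+1)}$ to $S$ by the same kind of formula as in \autoref{prop:hypersurface}.

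First I would verify that $X_T$ is non-degenerate in $\bbP(U_1 \otimes U_2 \otimes U_3)$. As a $\GL(U_2) \times \GL(U_3)$-representation, $U_1 \otimes U_2 \otimes U_3$ decomposes as $u_1$ copies of the irreducible module $U_2 \otimes U_3$. By Schur's lemma any $\GL(U_2) \times \GL(U_3)$-invariant subspace must be of the form $W \otimes U_2 \otimes U_3$ for some $W \subseteq U_1$. The linear span of $\Omega_T$ is such an invariant subspace, and the minimal $W$ for which $T \in W \otimes U_2 \otimes U_3$ coincides with the image of the first flattening of $T$; conciseness in the first factor forces this image to be $U_1$. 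Therefore $X_T$ spans all of $\bbP(U_1 \otimes U_2 \otimes U_3)$.

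Next, the classical minimal degree bound for non-degenerate irreducible projective varieties gives $\deg X_T \geq c + 1$. Let $L$ be a generic $c$-dimensional projective subspace through $[S]$. By Bertini's theorem, $L \cap X_T$ is a reduced $0$-dimensional scheme of cardinality $\deg X_T \geq c+1$. The complement $X_T \setminus \Omega_T$ is a proper closed subset of $X_T$, hence has codimension strictly greater than $c$ in $\bbP(U_1 \otimes U_2 \otimes U_3)$, and by genericity $L$ avoids it; so all intersection points lie in $\Omega_T$. By the general position theorem for non-degenerate varieties in characteristic zero, one may further assume the intersection points are in linearly general position in $L$, so that any $c+1$ of them span $L$. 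Since $[S] \notin X_T$, it is distinct from every intersection point.

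Picking $c+1$ intersection points $[T_1], \ldots, [T_{c+1}]$ spanning $L$, there exist scalars $\lambda_1, \ldots, \lambda_{c+1}$ with $S = \lambda_1 T_1 + \cdots + \lambda_{c+1} T_{c+1}$. For each $i$, choose $g_2^{(i)} \otimes g_3^{(i)} \in \GL(U_2) \times \GL(U_3)$ with $T_i = (g_2^{(i)} \otimes g_3^{(i)}) T$, and define
\[
M_1 = \id_{U_1}, \quad M_2 = \sum_{i=1}^{c+1} \lambda_i\, g_2^{(i)} \otimes e_i^*, \quad M_3 = \sum_{i=1}^{c+1} g_3^{(i)} \otimes e_i^*.
\]
A direct computation gives $(M_1 \otimes M_2 \otimes M_3)(T^{\thickdot(c+1)}) = S$, proving $T^{\thickdot(c+1)} \geq S$. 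The hard part will be the two genericity assertions in the third paragraph: that $L$ avoids $X_T \setminus \Omega_T$ and that $L \cap X_T$ sits in linearly general position in $L$. Both follow from standard Bertini-type arguments together with the uniform position principle, but some care is needed because $L$ is constrained to pass through the fixed point $[S]$; since $[S] \notin X_T$, however, the family of $c$-planes through $[S]$ is still irreducible and the required conditions cut out nonempty Zariski-open subsets.
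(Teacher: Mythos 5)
Your proposal is correct and follows essentially the same route as the paper's proof: non-degeneracy of $X_T$ from conciseness in the first factor, the minimal-degree bound $\deg X_T \geq c+1$, a generic $c$-plane through $[S]$ meeting $X_T$ in $\deg X_T$ distinct points of $\Omega_T$ that span the plane, and then exactly the same restriction maps $M_1 = \id_{U_1}$, $M_2 = \sum_j \lambda_j g_2^{(j)} \otimes e_j^*$, $M_3 = \sum_j g_3^{(j)} \otimes e_j^*$ applied to $T^{\thickdot (c+1)}$. The only differences are cosmetic: you spell out the non-degeneracy via Schur's lemma, and the general-position step you flag as delicate (planes constrained through $[S]$) is treated just as tersely in the paper itself.
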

\begin{proof}
Since $T$ is concise in the first factor, the variety $X_T$ is not linearly degenerate. In particular $\deg(X_T) \geq c+1$  \cite[Corollary 18.12]{Harris:AlgGeo}. Let $L$ be a generic linear space through $S$ of dimension $c$. Bertini's Theorem, together with the genericity assumption, guarantees that $L \cap X_T$ is a set of $\deg(X_T)$ distinct points and by genericity they all belong to $\Omega_T$. Moreover, it is easy to see that $L \cap X_T$ is not linearly degenerate in $L$. Therefore, there exist $[T_0] \vvirg [T_c] \in L \cap X_T$ that span $L$, and in particular there exist $\lambda_0 \vvirg \lambda_c$ such that $S = \lambda_0 T_0 + \cdots + \lambda_c T_c$.

Since $T_j \in \Omega_T$ for every $j$, there exists $g^{(j)}_2 \otimes g^{(j)}_3 \in \GL(U_2) \times \GL(U_3)$ such that $g^{(j)}_2 \otimes g^{(j)}_3  T = T_j$. 

Consider the restriction of $T^{\thickdot (c+1)} = T \boxtimes \langle 1,1,c+1\rangle$ to $U_1 \otimes U_2 \otimes U_3$ defined by the maps
\begin{align*}
M_1 &= \id_{U_1}  : U_1 \to U_1 \\
M_2 &= \sum_{j=0}^c ( \lambda_j g_2^{(j)} \otimes e_j^* ) : U_2 \otimes \bbC^{c+1} \to U_2 \\
M_3 &= \sum_{j=0}^c ( g_3^{(j)} \otimes e_j^* ) : U_3 \otimes \bbC^{c+1} \to U_3. 
\end{align*}
It is immediate that $M_1 \otimes M_2 \otimes M_3 ( T^{\thickdot (c+1)}) = S$.
\end{proof}

\subsection{A substitution method for aided rank}\label{subsec:A substitution method for aided rank}

In this section, we will give a method to bound from below the aided rank. Our method builds on a known method from~\cite{5959837}. We will use it to calculate aided ranks of powers of the $W$-tensor. We start by mentioning the following easy technical fact without proof.
\begin{lemma}\label{lem:vectorspacedims}
Let $V$ be a vector space and $U$ be a subspace of dimension $d$ of $V$. If $U$ is contained in the span of vectors $u_1, \dots , u_d$, then all $u_i$ must be elements of $U$.
\end{lemma}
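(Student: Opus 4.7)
The plan is to argue purely by dimension counting. Let $W = \operatorname{span}(u_1, \dots, u_d) \subseteq V$. On the one hand, $\dim W \leq d$ since $W$ is spanned by $d$ vectors. On the other hand, the hypothesis $U \subseteq W$ together with $\dim U = d$ forces $\dim W \geq d$. Combining these gives $\dim W = d$, and the inclusion $U \subseteq W$ between subspaces of equal finite dimension must be an equality, so $U = W$.

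Next I would observe that since $u_1, \dots, u_d$ span the $d$-dimensional space $W$, they must be linearly independent, hence form a basis of $W$. In particular each $u_i$ lies in $W$, and since $W = U$, each $u_i$ lies in $U$, which is exactly the claim.

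There is no real obstacle here: the argument is a one-line consequence of the fact that a spanning set of size equal to the dimension is automatically a basis, and that an inclusion of equidimensional finite-dimensional subspaces is an equality. The only thing to keep an eye on is the implicit finite-dimensionality of $U$, which is built into the hypothesis $\dim U = d < \infty$.
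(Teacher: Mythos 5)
Your proof is correct. The paper states this lemma without proof (it is introduced as an ``easy technical fact''), and your dimension-counting argument is exactly the expected justification: $U \subseteq W = \operatorname{span}(u_1,\dots,u_d)$ with $\dim U = d$ and $\dim W \leq d$ forces $U = W$, and each $u_i$ lies in $W$ by definition of the span, so the intermediate remark about linear independence of the $u_i$ is not even needed.
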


The second lemma gives a useful characterization of restrictions of $\langle n \rangle^{\thickdot p}$ in terms of flattenings. It is a simple generalization of a well-known characterization of tensor rank, see for example~\cite[Theorem 3.1.1.1]{Lan:TensorBook}
\begin{lemma}\label{lem:aidedrankchar}
Let $S \in V_1 \otimes V_2 \otimes V_3$ be any tensor and fix some natural number $p$. Then we have
\begin{equation*}
	R^{\thickdot p}(S) = \min \lbrace r : S\left(V_1^*\right) \subseteq V_2 \otimes V_3 \text{ spanned by } r \text{ matrices of rank } \leq p \rbrace.
\end{equation*}
\end{lemma}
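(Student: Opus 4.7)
The plan is to characterize restrictions of the form $\langle r \rangle^{\thickdot p} \geq S$ via the image of the first flattening $S(V_1^*)$, by unpacking a slice decomposition of $\langle r \rangle^{\thickdot p}$. First, I would write
\[
\langle r \rangle^{\thickdot p} = \langle r \rangle \boxtimes \langle 1,1,p \rangle = \sum_{i=1}^{r} e_i \otimes N_i, \qquad N_i := \sum_{j=1}^{p} (e_i \otimes e_j) \otimes (e_i \otimes e_j),
\]
so that each slice $N_i \in (\mathbb{C}^r \otimes \mathbb{C}^p)^{\otimes 2}$ is a rank-$p$ matrix. This slice form is the workhorse for both inequalities.

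For the ``$\geq$'' direction (bounding $R^{\thickdot p}(S)$ from below by the spanning quantity), I would start from a restriction $(A_1 \otimes A_2 \otimes A_3)\langle r \rangle^{\thickdot p} = S$. Applying the maps slice-by-slice yields $S = \sum_{i=1}^r A_1 e_i \otimes M_i$ with $M_i := (A_2 \otimes A_3) N_i$. The tensor action of $A_2 \otimes A_3$ on $V_2 \otimes V_3$ corresponds to the matrix operation $M \mapsto A_3 M A_2^{T}$, which cannot increase matrix rank; thus each $M_i$ has rank at most $p$. Consequently $S(V_1^*) \subseteq \operatorname{span}(M_1, \dots, M_r)$, certifying containment of $S(V_1^*)$ in the span of $r$ matrices of rank at most $p$.

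For the ``$\leq$'' direction, I would start with a containment $S(V_1^*) \subseteq \operatorname{span}(M_1, \dots, M_r)$ with each $M_i$ of rank at most $p$, and construct a concrete restriction. Since $S$ belongs to $V_1 \otimes S(V_1^*)$, fixing a basis $N_1, \dots, N_d$ of $S(V_1^*)$ allows me to write $S = \sum_\ell u_\ell \otimes N_\ell$ for some $u_\ell \in V_1$. Expanding each $N_\ell = \sum_i c_{\ell i} M_i$ and collecting terms produces $S = \sum_{i=1}^{r} w_i \otimes M_i$ with $w_i \in V_1$. Choosing a rank decomposition $M_i = \sum_{j=1}^{p} x_{ij} \otimes y_{ij}$ and defining $A_1 e_i := w_i$, $A_2(e_i \otimes e_j) := x_{ij}$, $A_3(e_i \otimes e_j) := y_{ij}$, a direct substitution verifies $(A_1 \otimes A_2 \otimes A_3)\langle r \rangle^{\thickdot p} = S$, so $R^{\thickdot p}(S) \leq r$.

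The main point requiring care will be the interpretation of ``spanned by $r$ matrices'': the statement must be read as containment $S(V_1^*) \subseteq \operatorname{span}(M_i)$ rather than exact equality, otherwise even the classical case $p = 1$ would fail, since a one-dimensional $S(V_1^*)$ can be generated by a single high-rank matrix that is not spanned by rank-one matrices. \autoref{lem:vectorspacedims} is useful in the extremal regime $\dim S(V_1^*) = r$, where it forces the spanning matrices to lie inside $S(V_1^*)$ itself, but the direct construction above handles the general case uniformly.
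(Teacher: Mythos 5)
Your proof is correct and follows essentially the same route as the paper: both directions work with the slice decomposition, reading off rank-$\leq p$ slices from a restriction of $\langle r \rangle^{\thickdot p}$ in one direction and reassembling a restriction from spanning matrices of rank $\leq p$ in the other. Your version merely adds detail the paper leaves implicit (why the slices $(A_2\otimes A_3)N_i$ have rank $\leq p$, the explicit maps $A_1,A_2,A_3$ built from rank decompositions of the $M_i$, and the correct containment reading of ``spanned by''), so no gap to report.
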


\begin{proof}
If  $\langle r \rangle^{\thickdot p} \geq S$ we can write $S = a_1 \otimes M_1 + \dots + a_r \otimes M_r$ for matrices $M_i$ of rank at most $p$, in other words, $S(V_1^*)$ is spanned by $r$ matrices $M_1, \ldots , M_r$ of rank at most $p$.

On the other hand, assume $S(V_1^*) = \langle N_1, \ldots ,N_r \rangle$ for matrices $N_i$ of rank at most $p$. Fixing a basis of $V_1$ the tensor $S$ is given by $\sum_{i=1}^{v_1} e_i \otimes P_i$ where $P_i = S(e^i)$. Since $S(V_1^*)$ is spanned by the $N_j$, we can find coefficients $\lambda_{ij}$ such that $P_i = \sum_{j=1}^r \lambda_{ij}N_j$. Hence, 
\begin{equation*}
	S = \sum_{j = 1}^{r} \left( \sum_{i=1}^{v_1} \lambda_{i,j}e_i \right) \otimes N_j,
\end{equation*}
which expresses $S$ as a restriction of $\langle r \rangle^{\; \thickdot p}$. This concludes the proof.
\end{proof}

We can use~\autoref{lem:aidedrankchar} to generalize a method to lower bound rank of tensors introduced in~\cite{5959837} . 

\begin{theorem}\label{thm:aidedrankreduction}
Let $S \in V_1\otimes V_2 \otimes V_3$ with $\dim(V_1) = v_1$. Fix a basis $e_1, \ldots ,e_{v_1}$ of $V_1$, and write
\begin{equation*}
    S = \sum_{i = 1}^{v_1} e_i  \otimes M_i
\end{equation*}
for matrices $M_i \in V_2 \otimes V_3$ and assume $M_1 \neq 0$. Moreover, for complex numbers $\lambda_2, \dots , \lambda_{v_1}$ define \begin{equation*}
    \hat{S}(\lambda_2, \dots , \lambda_{v_1}) = \sum_{j = 2}^{v_1} e_j \otimes (M_j - \lambda_j M_1).
\end{equation*}
We have the following:
\begin{enumerate}[(i)]
    \item There exist $\lambda_2, \dots , \lambda_{v_1}\in \mathbb{C}$ such that 
    \begin{equation*}
    R^{\thickdot p}(\hat{S}(\lambda_2, \dots , \lambda_{v_1})) \leq R^{\thickdot p}(S) - 1.
    \end{equation*}
    \item Assume that $M_1$ has rank at most $p$. Then, for all $\lambda_2, \dots , \lambda_{v_1}$
    \begin{equation*}
    R^{\thickdot p}(\hat{S}(\lambda_2, \dots , \lambda_{v_1})) \geq R^{\thickdot p}(S) - 1.
\end{equation*}
\end{enumerate}
Hence, if $M_1$ has rank at most $p$, we always find $\lambda_2, \dots , \lambda_{v_1}$ such that 
\begin{equation*}
    R^{\thickdot p}(\hat{S}(\lambda_2, \dots , \lambda_{v_1})) = R^{\thickdot p}(S) - 1.
\end{equation*}
\end{theorem}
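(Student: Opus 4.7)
The plan is to translate both parts into the flattening characterization of Lemma \ref{lem:aidedrankchar}: for a tensor $T \in V_1 \otimes V_2 \otimes V_3$, $R^{\thickdot p}(T)$ equals the minimum number of rank-at-most-$p$ matrices spanning $T(V_1^*) \subseteq V_2 \otimes V_3$. Observe that the flattenings satisfy $S(V_1^*) = \langle M_1, M_2, \ldots, M_{v_1}\rangle$ and $\hat S(V_1^*) = \langle M_j - \lambda_j M_1 : 2 \leq j \leq v_1\rangle$. Each statement then becomes a small linear-algebra manipulation of spanning sets.

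For part (i), set $r = R^{\thickdot p}(S)$ and pick matrices $N_1, \ldots, N_r$ of rank at most $p$ spanning $S(V_1^*)$. Since $M_1 \in S(V_1^*)$ and $M_1 \neq 0$, it admits a nontrivial expansion $M_1 = \sum_{k=1}^r c_k N_k$; after relabeling we may assume $c_1 \neq 0$, so that $\{M_1, N_2, \ldots, N_r\}$ still spans $S(V_1^*)$. For each $j \geq 2$ this gives $M_j = \lambda_j M_1 + \sum_{k=2}^r \mu_{j,k} N_k$ for some scalars; fixing these $\lambda_j$, every difference $M_j - \lambda_j M_1$ lies in the span of the $r-1$ matrices $N_2, \ldots, N_r$. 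Applying Lemma \ref{lem:aidedrankchar} to $\hat S$ yields $R^{\thickdot p}(\hat S(\lambda_2, \ldots, \lambda_{v_1})) \leq r - 1$.

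For part (ii), suppose $M_1$ has rank at most $p$, and let $\hat r = R^{\thickdot p}(\hat S(\lambda_2, \ldots, \lambda_{v_1}))$, with $N_1, \ldots, N_{\hat r}$ rank-at-most-$p$ matrices spanning $\hat S(V_1^*)$. For $j \geq 2$ write $M_j = (M_j - \lambda_j M_1) + \lambda_j M_1$, so $M_j$ lies in $\langle N_1, \ldots, N_{\hat r}, M_1\rangle$; trivially $M_1$ lies there too. Hence $S(V_1^*)$ is spanned by $\hat r + 1$ matrices of rank at most $p$, and Lemma \ref{lem:aidedrankchar} gives $R^{\thickdot p}(S) \leq \hat r + 1$, i.e.\ $\hat r \geq R^{\thickdot p}(S) - 1$. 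Combining (i) and (ii) under the rank hypothesis on $M_1$ produces a choice of $\lambda_j$'s realizing equality.

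There is no real obstacle: the only subtlety worth flagging is that the spanning sets in Lemma \ref{lem:aidedrankchar} are not required to be linearly independent, but both arguments only manipulate spans, so this causes no issue. The hypothesis $\mathrm{rank}(M_1) \leq p$ is used precisely once, in part (ii), to ensure that adjoining $M_1$ to a rank-$\leq p$ spanning set of $\hat S(V_1^*)$ still yields a rank-$\leq p$ spanning set.
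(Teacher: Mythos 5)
Your proof is correct and follows essentially the same route as the paper: both parts reduce via the flattening characterization (Lemma \ref{lem:aidedrankchar}), part (i) by choosing $\lambda_j$ so that the $M_j-\lambda_j M_1$ avoid the spanning matrix carrying the $M_1$-component (your exchange step is just a rephrasing of the paper's choice $\lambda_j=\mu_{j,1}/\mu_{1,1}$), and part (ii) by adjoining the rank-$\leq p$ matrix $M_1$ to a spanning set of $\hat S(V_1^*)$. No gaps to report.
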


\begin{proof}
Let $r = R^{\thickdot p}(T)$, that is, $S(V_1^*)$ is contained in the span of $r$ matrices of rank at most $p$. Denote these matrices by $X_1, \dots , X_r$ and write
\begin{equation*}
M_i = \sum_{j = 1}^r \mu_{i, j} X_{j} \text{ for }i = 1, \ldots ,v_1.
\end{equation*}
Without loss of generality assume that $\mu_{1,1} \neq 0$ and set $\lambda_j = \frac{\mu_{j,1}}{\mu_{1,1}}$. We easily see that $\hat{S}(\lambda_2, \dots, \lambda_a)(V_1^*) \subset \langle X_2, \dots , X_r \rangle$, and therefore 
\begin{equation*}
R^{\thickdot p}(\hat{S}(\lambda_2, \dots , \lambda_{v_1})) \leq R^{\thickdot p}(S) - 1.
\end{equation*}
That shows the first claim.\\
On the other hand, if $M_1$ has rank at most $p$ and $ Y_1, \dots , Y_s $ span $\hat{S}(\lambda_2, \dots, \lambda_{v_1})$, then clearly the matrices $M_1, Y_1, \dots , Y_s$ will span $S(V_1^*)$, which shows the second claim.
\end{proof}

 In the next section we will see how one can use~\autoref{thm:aidedrankreduction} to calculate aided ranks. 

\subsection[Aided rank of Kronecker powers of the W-tensor]{Aided rank of Kronecker powers of the $W$-tensor}\label{subsub:W}
Let $V_1 ,V_2 $ and $ V_3 $ be 2-dimensional with fixed bases $e_1,e_2$. In this section, we will use the method developed in~\autoref{subsec:A substitution method for aided rank} to calculate the aided rank of powers of the $W$-tensor 
\begin{equation*}
 W = e_1 \otimes e_1 \otimes e_2 + e_1 \otimes e_2 \otimes e_1 + e_2 \otimes e_1 \otimes e_1 \in V_1 \otimes V_2 \otimes V_3.
\end{equation*}
The main result of this section is the following.

\begin{proposition}
For the $k$'th Kronecker power of the $W$ tensor $W^{\boxtimes k} \in V_1^{\otimes k} \otimes V_2^{\otimes k} \otimes V_3^{\otimes k}$ it holds that 
\begin{equation*}
    R^{\thickdot p}(W^{\boxtimes k}) \begin{cases} = 2^k \text{ if } p \geq 2^k \\ > 2^k \text{ if } p < 2^k.
    \end{cases}
\end{equation*}
\end{proposition}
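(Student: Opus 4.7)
The statement has three parts: the lower bound $R^{\thickdot p}(W^{\boxtimes k}) \geq 2^k$ for every $p$, the matching upper bound $R^{\thickdot p}(W^{\boxtimes k}) \leq 2^k$ for $p \geq 2^k$, and the strict inequality $R^{\thickdot p}(W^{\boxtimes k}) > 2^k$ for $p < 2^k$. The first two are short, while the last is the main content. By~\autoref{lem:aidedrankchar}, $R^{\thickdot p}(W^{\boxtimes k})$ is the minimum number of rank-$\leq p$ matrices whose span contains $\mathcal{W}_k := W^{\boxtimes k}(V_1^{*\otimes k}) \subseteq V_2^{\otimes k} \otimes V_3^{\otimes k}$. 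Since the $V_1$-flattening of $W$ has rank $2$ and flattening rank is multiplicative under the Kronecker product, $\dim \mathcal{W}_k = 2^k$; hence any spanning set contains at least $2^k$ matrices, giving the lower bound. When $p \geq 2^k$, every $2^k \times 2^k$ matrix has rank at most $p$, so any basis of $\mathcal{W}_k$ witnesses $R^{\thickdot p}(W^{\boxtimes k}) \leq 2^k$.

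For the strict inequality, suppose $N_1, \ldots, N_{2^k}$ are matrices of rank $\leq p$ with $p < 2^k$ whose span contains $\mathcal{W}_k$. A dimension count forces the span to equal $\mathcal{W}_k$, so the $N_i$ form a basis of $\mathcal{W}_k$ consisting entirely of matrices of rank $< 2^k$. The plan is to rule this out by showing that the rank-deficient elements of $\mathcal{W}_k$ lie in a hyperplane $H \subset \mathcal{W}_k$, which cannot contain a basis of the $2^k$-dimensional space $\mathcal{W}_k$.

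To identify the hyperplane, I fix the basis $X_\epsilon = X_{\epsilon_1} \otimes \cdots \otimes X_{\epsilon_k}$ of $\mathcal{W}_k$ indexed by $\epsilon \in \{0,1\}^k$, with $X_0 = e_1 \otimes e_2 + e_2 \otimes e_1$ and $X_1 = e_1 \otimes e_1$, and let $\alpha \colon \mathcal{W}_k \to \bbC$ read off the coefficient of $X_0^{\otimes k}$. The key claim is the determinant identity
\[
\det(M) = \pm\, \alpha(M)^{2^k} \quad \text{for every } M \in \mathcal{W}_k,
\]
viewing $M$ as a $2^k \times 2^k$ matrix. This identity forces the rank-deficient locus in $\mathcal{W}_k$ to equal the hyperplane $H = \{\alpha = 0\}$, completing the proof.

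I would establish the key identity by induction on $k$. The base case $k = 1$ is the direct computation $\det(\alpha X_0 + \beta X_1) = -\alpha^2$. For the inductive step, I split off the first tensor factor and write $M = X_0 \otimes P + X_1 \otimes Q$ with $P, Q \in \mathcal{W}_{k-1}$; the Kronecker-product structure puts $M$ in the block form $\bigl(\begin{smallmatrix} Q & P \\ P & 0 \end{smallmatrix}\bigr)$ of size $2^k \times 2^k$. Swapping the two row blocks (a permutation of sign $(-1)^{2^{k-1}} = +1$ for $k \geq 2$) produces a block lower-triangular matrix of determinant $\det(P)^2$, and the observation $\alpha(P) = \alpha(M)$ together with the inductive hypothesis applied to $P \in \mathcal{W}_{k-1}$ yields $\det(M) = \alpha(M)^{2^k}$. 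The main obstacle is setting up the tensor-factor reshuffling between $(V_2 \otimes V_3)^{\otimes k}$ and $V_2^{\otimes k} \otimes V_3^{\otimes k}$ carefully so that the Kronecker product of matrices matches the tensor product of slices; once that identification is in place, the induction proceeds smoothly.
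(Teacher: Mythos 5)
Your proof is correct and takes essentially the same approach as the paper: both reduce, via \autoref{lem:aidedrankchar} and the dimension count of \autoref{lem:vectorspacedims}, to the fact that every element of $W^{\boxtimes k}((V_1^{\otimes k})^*)$ whose $X_0^{\otimes k}$-coefficient (the common antidiagonal entry) is nonzero has full rank $2^k$, so a basis of rank-deficient matrices cannot exist. The only difference is how that fact is verified: the paper reads it off from the anti-triangular shape of the slices, while you prove the equivalent identity $\det(M)=\pm\,\alpha(M)^{2^k}$ by a block-determinant induction, which is a sound (if slightly longer) substitute.
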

\begin{proof}
It is clear that for any $r < 2^k$, $\langle r \rangle ^{\thickdot p} \not \geq W^{\boxtimes k}$ independently of $p$. On the other hand, $W(V_1^*)$ is the span of $e_1 \otimes e_2 + e_2 \otimes e_1$ and $e_1 \otimes e_1$, in other words, $R^{\thickdot 2}(W) = 2$. Consequently, we know that $\langle 2^k \rangle^{\;\thickdot 2^k} \geq W^{\boxtimes k}$ for all $k$, in other words, $R^{\thickdot 2^k}(W^{\boxtimes k}) \leq 2^k$. 

We will now use~\autoref{thm:aidedrankreduction} to show that $\langle 2^k \rangle^{\;\thickdot 2^k-1} \ngeq W^{\boxtimes k}$ which will finish the proof. One can verify easily that -- thinking of the elements of $V_2^{\otimes k} \otimes V_3^{\otimes k}$ as $2^{k}\times  2^{k}$ matrices -- all matrices in $W^{\boxtimes k}((V_1^{\otimes k})^*)$ are of the form 
\begin{equation}\label{formofmatrix}
    \begin{pmatrix}
 * &  & x_0\\
  &  \iddots&\\
   x_0&  &0\\
\end{pmatrix}.
\end{equation}
That is, all matrices in $W^{\boxtimes k}((V_1^{\otimes k})^*)$ have the same entry $x_0$ in all antidiagonal entries and zeros in all entries below the antidiagonal. Now, assume for some $p$ that $\langle 2^k \rangle^{\thickdot p} \geq W^{\boxtimes k}$. By~\autoref{lem:aidedrankchar}, there are matrices $N_i$ of rank at most $p$ such that
\begin{equation}\label{eqcontainment}
    W^{\boxtimes k}\left((V_1^{\otimes k})^* \right) \subseteq \langle N_1 , \dots , N_{2^k} \rangle.
\end{equation}
As $W^{\boxtimes k}$ is concise, $W^{\boxtimes k}\left(V_1^{\otimes k})^* \right)$ has dimension $2^k$. Therefore, by~\autoref{lem:vectorspacedims}, the $N_i$ are elements of $W^{\boxtimes k}\left((V_1^{\otimes k})^* \right)$. We observe that a matrix of the form~\autoref{formofmatrix} with $x_0 \neq 0$ has full rank $2^k$. That is, if the matrices $N_i$ have rank $p < 2^k$ and are elements of $W^{\boxtimes k}\left((V_1^{\otimes k})^* \right)$, their span only contains matrices with zeros on the antidiagonal. That is,~\autoref{eqcontainment} cannot be satisfied if all $N_i$ have rank at most $p < 2^k$, that is, 
\begin{equation*}
    \langle 2^k \rangle^{\thickdot p} \ngeq W^{\boxtimes k} \text{ if } p<2^k.
\end{equation*}
In other words, $R^{\thickdot p}(W^{\boxtimes k}) > 2^k$ for $p< 2^k$.
\end{proof}

In particular we see that the minimal rank of an aiding matrix turning the degeneration $\langle 2^k \rangle \trianglerighteq W^{\boxtimes k}$ into a restriction differs from the bound in~\autoref{lem:interpolatewithmatrixanydeg} only by a factor of $\frac{1}{2}$. 

\bibliographystyle{alphaurl}
\bibliography{degConstMap.bib}

\newpage
\begin{appendix}
\appendixpage
\section{Code for~\autoref{rem:orbitdimof222}}\label{codeappendix}
The following Macaulay2~\cite{M2} code gives a lower bound of 37 on the dimension of the orbit of a generic tensor in $\mathbb{C}^3 \otimes \mathbb{C}^4 \otimes \mathbb{C}^4$ which is $(2,3,3)$-compressible. This code is an adjustment of the one used in \cite{https://doi.org/10.48550/arxiv.2101.03148}.

{\small
\begin{tcolorbox}[fontupper=\footnotesize]
\begin{verbatim}
V_1 = QQ[v_(1,1)..v_(1,3)]
V_2 = QQ[v_(2,1)..v_(2,4)]
V_3 = QQ[v_(3,1)..v_(3,4)]
W_1 = QQ[w_(1,1)..w_(1,3)]
W_2 = QQ[w_(2,1)..w_(2,4)]
W_3 = QQ[w_(3,1)..w_(3,4)]

ALL = V_1**V_2**V_3**W_1**W_2**W_3

M_1 = sub(random(QQ^4,QQ^4),ALL)
M_2 = mutableMatrix(ALL,4,4)
M_3 = mutableMatrix(ALL,4,4)

for i from 0 to 3 do(
    M_2_(0,i)=random(QQ);
    M_2_(i,0)=random(QQ);
    M_3_(0,i)=random(QQ);
    M_3_(i,0)=random(QQ);
    )
M_2 = matrix M_2
M_3 = matrix M_3

T = 0
for i from 1 to a do(
    for j from 1 to b do(
	        for k from 1 to c do(
	            T = T + M_i_(j-1,k-1)*w_(1,i)*w_(2,j)*w_(3,k);
	            );
	        );
    	)--T is now (2,3,3)-compressible with random entries
	    
-- a random point in Hom(W1,V1) + Hom(W2,V2) + Hom(W3,V3)
-- the rank of the differential of the parameterization map at randHom
-- will provide a lower bound on dim of the orbit closure of our tensor

randHom =flatten flatten apply(3,j-> 
toList apply(1..di_(j+1),i ->w_(j+1,i)=>sub(random(1,V_(j+1)),ALL)))

-- compute the image of the differential
-- LL will be a list of elements of multidegree (1,1,1), 
-- which are to be interpreted as elements of V1 \otimes V2 \otimes V3
-- generating the image of the differential of the parameterization map
LL = flatten for i from 1 to 3 list (
     ww = sub(vars(W_i),ALL);
     vv = sub(vars(V_i),ALL);
     flatten entries (sub( (vv ** diff(ww,Tused)),randHom)));
minGen = mingens (ideal LL);
orbitdim = numcols(minGen) --37
\end{verbatim}
\end{tcolorbox}
}
\section{Partial degenerations of the unit tensor}
We have seen in~\autoref{prop:nogoresult} that the unit tensor $\langle r \rangle$ does not admit partial degenerations where the constant map $A_1$ is full rank. However, we also saw that in the case that $A_1$ has rank $r-1$ there are honest partial degenerations which we classify in~\autoref{prop:strassentensor}. In this appendix, we see that also in the realm of matrix pencils, examples exist. For that, we consider for simplicity only matrix pencils in $\mathbb{C}^2 \otimes \mathbb{C}^m \otimes \mathbb{C}^{m + 1}$. Recall that the matrix pencil $T$ given in~\autoref{eq:denseorbitpencil} has a dense orbit under the action of $\GL_m \times \GL_{m+1}$. It is well known (see, for example, Theorem 3.11.1.1 in~\cite{Lan:TensorBook}) that this pencil has rank $m + 1$. On the other hand, it is known that the maximal rank of a tensor in $\mathbb{C}^2 \otimes \mathbb{C}^m \otimes \mathbb{C}^{m + 1}$ is $\lceil\frac{3m}{2}\rceil$. Hence, we can find tensors $S$ in $\mathbb{C}^2 \otimes \mathbb{C}^m \otimes \mathbb{C}^{m + 1}$ with $\langle m + 1 \rangle \geq T \bgeq S$, $R(S) > m+1$ and consequently $\langle m + 1 \rangle \bgeq S$ but  $\langle m+1 \rangle \ngeq S$. 

To see an explicit example, let us construct for every $m$ a matrix pencil of rank greater or equal to $m+1$ to which $\langle m \rangle$ degenerates partially. For this, we recall the following well-known result about the rank of matrix pencils~\cite{DBLP:conf/mfcs/Grigoriev78,DBLP:journals/siamcomp/JaJa79}.

\begin{proposition}\label{prop:rankofmatrixpencil}
Consider $p_1 \times  q_1$ matrices $T'_1, T'_2$ and $p_2 \times  q_2$ matrices $T''_1,T''_2$. Let $T'$ be the tensor corresponding to the matrix pencil $[T'_1,T'_2]$ and similar for $T''$ and write $T \in \mathbb{C}^2\otimes \mathbb{C}^{p_1 + p_2} \otimes \mathbb{C}^{q_1 + q_2}$ for the tensor corresponding to the matrix pencil
\begin{equation*}
    \left[ 
\begin{pmatrix}
	T'_1 & \\ &T''_1
\end{pmatrix},
\begin{pmatrix}
	T'_2 & \\ &T''_2
\end{pmatrix}
\right].
\end{equation*}
Then, it holds that
\begin{equation*}
R(T) = R(T') + R(T'').
\end{equation*}
\end{proposition}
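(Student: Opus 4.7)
The upper bound $R(T)\leq R(T')+R(T'')$ is immediate: concatenating a rank decomposition of $T'$ (embedded in the top $p_1\times q_1$ blocks) with one of $T''$ (embedded in the bottom $p_2\times q_2$ blocks) produces a rank decomposition of $T$ with $R(T')+R(T'')$ terms. All content is in the reverse inequality $R(T)\geq R(T')+R(T'')$.

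My plan is to reduce to the Kronecker canonical form of matrix pencils. Every pencil in $\bbC^2\otimes\bbC^p\otimes\bbC^q$ is equivalent, under the $\GL(\bbC^p)\times\GL(\bbC^q)$-action, to a direct sum of a few standard indecomposable block types: the singular blocks $L_k$ and $L_k^\top$, the regular blocks corresponding to finite generalized eigenvalues (Jordan-type), and blocks at infinity. Since tensor rank is invariant under this action, I may replace $T'$ and $T''$ by their canonical forms. The question then reduces to computing the rank of a direct sum of indecomposable blocks, and the plan is to invoke the explicit formula for the rank of such a direct sum from Grigoriev and Ja'Ja'~\cite{DBLP:conf/mfcs/Grigoriev78,DBLP:journals/siamcomp/JaJa79}: the total rank is obtained as a sum, over the indecomposable summands, of a simple function of the block type and size. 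This formula is manifestly additive under direct sums, which delivers the claim.

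Carrying out the last step requires, for each pair of block types appearing in $T'$ and $T''$, verifying that rank-one summands in an optimal decomposition of $T$ cannot be ``shared'' across the two halves in a way that saves terms. The cleanest way is via the substitution method on the two-dimensional first factor: pick a generic point $e_1+\alpha e_2\in U_1^*$, which produces the scalar pencil matrix $T_1+\alpha T_2$ of rank $\operatorname{rk}(T_1'+\alpha T_2')+\operatorname{rk}(T_1''+\alpha T_2'')$, then use that $R(T)$ is at least the rank over the function field $\bbC(\alpha)$ plus corrections coming from non-generic values of $\alpha$ (the roots of the determinantal polynomial of each regular part, and the Kronecker indices of the singular parts). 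These corrections contribute additively because they can be localized at a single block.

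The main obstacle is that Strassen's additivity conjecture is false in general, so the argument must genuinely use the pencil structure, in particular that $\dim U_1=2$ and that Kronecker's theorem provides a complete classification. The most delicate case is direct sums of two regular Jordan-type blocks sharing the same generalized eigenvalue, where a naive block-diagonal decomposition is a priori not optimal and one must rule out gluing of rank-one terms across blocks; this is precisely the subtlety resolved in~\cite{DBLP:conf/mfcs/Grigoriev78,DBLP:journals/siamcomp/JaJa79}, which I would cite rather than reprove.
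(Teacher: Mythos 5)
The paper never actually proves this proposition: it is stated as a recalled, well-known fact and attributed to Grigoriev and Ja'Ja' \cite{DBLP:conf/mfcs/Grigoriev78,DBLP:journals/siamcomp/JaJa79}, so there is no internal argument to compare yours against, and for the only hard part your proposal leans on exactly the same references the paper cites --- in substance you are doing at least as much as the paper does, and your upper-bound half and the reduction to Kronecker canonical form are correct. Two comments. First, the load-bearing step is your claim that the cited rank formula is ``manifestly additive'': this is true precisely because the formula assigns to each Kronecker invariant a contribution depending on that invariant alone (a singular block of minimal index $\epsilon \geq 1$ contributes $\epsilon + 1$, an elementary divisor of degree $d$, finite or at infinity, contributes $d$ if $d = 1$ and $d+1$ if $d \geq 2$, and zero rows or columns contribute nothing). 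Quoted in this per-elementary-divisor form, the formula already absorbs the ``two nondiagonalizable blocks at the same eigenvalue'' case you single out, so no separate argument is needed there; but note that a formula phrased instead in terms of distinct defective eigenvalues would not be block-additive, so the precise form of the citation matters. Second, the intermediate sketch via rank over $\bbC(\alpha)$ with ``corrections that can be localized at a single block'' is not an argument as written --- the localization claim is essentially equivalent to the additivity being proved --- but since you explicitly defer that point to \cite{DBLP:conf/mfcs/Grigoriev78,DBLP:journals/siamcomp/JaJa79}, this is a presentational issue rather than a gap. An alternative single citation would be the Ja'Ja'--Takche theorem on the direct sum conjecture, which gives $R(T' \oplus T'') = R(T') + R(T'')$ whenever one summand has a tensor factor of dimension at most two and avoids the canonical form entirely, though that reference is not in the paper's bibliography.
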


We will now construct a partial degeneration of $\langle m \rangle$ and will show using~\autoref{prop:rankofmatrixpencil} that it has rank at least $m+1$. Applying the linear map 
\begin{equation*}
A_1 : U \rightarrow \mathbb{C}^{2}, e_k \mapsto e_1 + ke_2
\end{equation*}
we see that $\langle m \rangle$ restricts to the tensor corresponding with the matrix pencil $[\id_m,\mathrm{diag}(1 \dots  m)]$. Since the matrix 
\begin{equation*}
M = \begin{pmatrix}
	&1&1& 	   &	   &\\
	& &2&	  1&\\
	& & &\ddots &\ddots &\\
	& & &	   &	m-1&1\\
	& & &	   &       &m
\end{pmatrix}
\end{equation*}
has $m$ different eigenvalues $1,\dots ,m$, we deduce that also the tensor associated to the matrix pencil $[\id_m, M]$ is a restriction of $\langle m \rangle$.

For any $1<k<m$ define $S_{k,m} $ to be the tensor corresponding to the matrix pencil 
\begin{equation}\label{eq:oneconstdegofghz}
\left[  \begin{pmatrix}
 \begin{pmatrix}
     \mathrm{id}_{k-1}, & 0 
    \end{pmatrix} & 0 \\
    0 & \begin{pmatrix}
     0\\\mathrm{id}_{m-k} 
    \end{pmatrix} 
\end{pmatrix} ,\begin{pmatrix}
 J_1 & 0 \\
    0 & J_2
\end{pmatrix}  \right].
\end{equation}

where 

\begin{equation*}
    J_1 = 
    \begin{pmatrix}
     1 & 1&&  \\
       &\ddots&\ddots&\\
       &&k-1&1
    \end{pmatrix},
    J_2 = 
    \begin{pmatrix}
     1 & &&  \\
      k+1 &1&&\\
      &\ddots &\ddots &&\\
      &&m-1&1\\
       &&&m
    \end{pmatrix}
\end{equation*}

One verifies that applying the degeneration maps
\begin{equation*}
	A_2(\epsilon) = \mathrm{diag} (\underbrace{1,\dots , 1}_k,\underbrace{\epsilon,\dots ,\epsilon}_{m-k}), \;\; A_3(\epsilon) = \mathrm{diag} (\underbrace{\epsilon, \dots , \epsilon}_{k+1}, \underbrace{1,\dots ,1}_{m-k-1})
\end{equation*}

the tensor corresponding to the matrix pencil $[\id_k,\mathrm{diag}(1, \ldots , m)]$ results in $\epsilon S_{k,m} + \mathcal{O}(\epsilon^2)$. In particular, $\langle m \rangle \bgeq S_{k,m}$.

From~\autoref{prop:rankofmatrixpencil}, we know that
\begin{equation}\label{eq:pencilranksum}
	R(S_{k,m}) = R(S_{k,m}^1) + R(S_{k,m}^2)
\end{equation}
where $S_{k,m}^1$ corresponds to $[(\id_{k-1},0),J_1])$ and $S_{k,m}^2$ to $[\begin{pmatrix}
     0 &\mathrm{id}_{m-k} 
\end{pmatrix}^{T},J_2]$, respectively. Using flattenings, one can now verify that the two pencils in~\autoref{eq:pencilranksum} have ranks $k$ and $m-k +1$, respectively, which shows $R(S_{k,m})\geq m+1$. Hence, it is not a restriction of $\langle m \rangle$. 

\section{The aided rank of the Coppersmith-Winograd tensor}\label{subsec:aidedrankofCW}
In this appendix, we demonstrate with further examples how to compute aided rank using~\autoref{thm:aidedrankreduction}, we are going to calculate the aided ranks of the Coppersmith-Winograd (CW) tensors. The study of these tensors was a crucial tool in the breakthrough result~\cite{10.1145/28395.28396} bounding the exponent of matrix multiplication $\omega$ from above by 2.376.

\begin{definition}
Let $V_1 \simeq V_2 \simeq V_3 \simeq \mathbb{C}^{q+2}$ and fix a basis $e_0, \ldots ,e_{q+1}$. The $q$'th CW tensor is the symmetric tensor 
\begin{align*}
	T_{CW,q} = &\sum_{i = 1}^{q} e_0 \otimes e_i \otimes e_i + e_{q+1}\otimes e_{0} \otimes e_{0}+\\
		   &\sum_{j = 1}^{q} e_j \otimes e_0 \otimes e_j + e_{0}\otimes e_{q+1} \otimes e_{0}+\\
		   &\sum_{k = 1}^{q} e_k \otimes e_k \otimes e_0 + e_{0}\otimes e_{0} \otimes e_{q+1}\in V_1 \otimes V_2\otimes V_3.
\end{align*}
\end{definition}

We want to calculate $R^{\thickdot p}(T_{CW,q})$ for any $p$ and $q$. 

\begin{proposition}\label{prop:aidedrankcoppersmith}
For $p \geq 2$, the $p$-aided rank of the $q$'th Coppersmith-Winograd tensor is given by
\begin{equation*}
    R^{\thickdot p}(T_{CW,q}) = q+1 + \left\lceil \frac{q + 2}{p}  \right\rceil.
\end{equation*}
\end{proposition}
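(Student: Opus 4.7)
The plan is to work with the flattening characterization in \autoref{lem:aidedrankchar}: $R^{\thickdot p}(T_{CW,q})$ is the minimum number of rank-$\leq p$ matrices in $V_2 \otimes V_3$ whose span contains the slice space $T_{CW,q}(V_1^*)$. Writing $T_{CW,q} = \sum_{i=0}^{q+1} e_i \otimes M_i$, a direct computation gives $M_0 = \sum_{j=1}^{q} e_j \otimes e_j + e_0 \otimes e_{q+1} + e_{q+1} \otimes e_0$ of rank $q+2$, $M_i = e_0 \otimes e_i + e_i \otimes e_0$ of rank $2$ for $1 \leq i \leq q$, and $M_{q+1} = e_0 \otimes e_0$ of rank $1$. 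Since $p \geq 2$, each of the slices $M_1,\dots,M_{q+1}$ is itself a rank-$\leq p$ matrix.

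For the upper bound $R^{\thickdot p}(T_{CW,q}) \leq q + 1 + \lceil (q+2)/p \rceil$, I would split $M_0$ into $\lceil (q+2)/p \rceil$ rank-$\leq p$ summands, by grouping its $q+2$ rank-one constituents into batches of size $p$, and combine these with the $q+1$ slices $M_1,\dots,M_{q+1}$ to obtain a spanning family of the required size.

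For the lower bound, I would apply \autoref{thm:aidedrankreduction}(i) iteratively. Part (i) is unconditional and gives $R^{\thickdot p}(S) \geq \min_\lambda R^{\thickdot p}(\hat S(\lambda)) + 1$, so a bound on $\min_\lambda R^{\thickdot p}(\hat S(\lambda))$ comes from a bound that is uniform over all $\lambda$. Iterating, one eliminates $M_{q+1}, M_q, \ldots, M_1$ in turn (permuting the basis of $V_1$ at each step so that the targeted slice occupies the ``first'' position). After $q+1$ iterations, the remaining single-slice tensor has its matrix of the form $M_0^{\mathrm{final}} = M_0 - \sum_{i=1}^{q+1} c_i M_i$ for some $c_i \in \mathbb{C}$; a triangular unwinding of the nested substitutions confirms that every tuple $(c_1, \ldots, c_{q+1}) \in \mathbb{C}^{q+1}$ is realizable (the coefficient of $M_1$ depends only on the last-step parameter, the coefficient of $M_2$ only on the last two parameters, and so on). Since the aided rank of a single matrix equals $\lceil \operatorname{rank}/p \rceil$, it remains to show that $M_0^{\mathrm{final}}$ has rank exactly $q+2$ for every choice of $c_i$.

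The main obstacle is verifying this rank-stability. All slices $M_1, \ldots, M_{q+1}$ are supported in the rows and columns indexed by $\{0, 1, \ldots, q\}$; hence the $1$-entries of $M_0$ at positions $(0, q+1)$ and $(q+1, 0)$ survive the subtraction, as does the identity block on $\{1, \ldots, q\} \times \{1, \ldots, q\}$. A short column operation---using the identity block to clear the row-$0$ entries $-c_i$ from columns $1,\dots,q$, and analogously cleaning column $0$ using its row-$(q+1)$ pivot---then exhibits $q+2$ linearly independent pivot columns irrespective of the $c_i$. Combined with the iteration, this yields $R^{\thickdot p}(T_{CW,q}) \geq q + 1 + \lceil (q+2)/p \rceil$, matching the upper bound.
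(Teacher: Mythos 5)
Your proposal is correct and follows essentially the same route as the paper: both rest on the substitution method of \autoref{thm:aidedrankreduction} (combined with \autoref{lem:aidedrankchar}), eliminating the rank-$\leq 2$ slices $M_{q+1}, M_q, \dots, M_1$ one at a time and observing that the surviving slice $M_0 - \sum_i c_i M_i$ retains rank $q+2$ for every choice of coefficients, so its aided rank is $\lceil (q+2)/p \rceil$. The only difference is organizational: the paper invokes both parts of \autoref{thm:aidedrankreduction} to get an exact drop of one at each step, whereas you obtain the upper bound by explicitly splitting $M_0$ into rank-$\leq p$ pieces and the lower bound from part (i) alone together with a uniform-in-$\lambda$ rank-stability argument.
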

\begin{proof}
Writing 
\begin{equation*}
    M(x_0, \dots , x_{q+1}) = \begin{pmatrix}
    x_{q+1}  &  x_1  &  \dots   &  x_q  &  x_0  \\
    x_1      &  x_0  &          &       &       \\
    \vdots   &       &  \ddots  &       &       \\
    x_q      &       &          &  x_0  &       \\
    x_0      &       &          &       &  0
    \end{pmatrix}
\end{equation*}
we have $T_{CW,q}(V_1^*) = \lbrace M(x_0, \dots x_{q+1}):\; x_0, \dots x_{q+1} \in \mathbb{C} \rbrace$. Note that $T_{CW,q}$ is concise. Hence, we have $R^{\thickdot p}(T_{CW,q}) \geq q + 2$ for any $p \in \mathbb{N}$. Moreover, it is clear that $R^{\thickdot p}(T_{CW,q}) \leq q + 2$ whenever $p \geq q+2$ which gives $R^{\thickdot p}(T_{CW,q}) = q + 2$ for all $p \geq q+2$. 

For $p \leq q+1$, we will use~\autoref{thm:aidedrankreduction}. Suppose $p \geq 2$. Interpreting $V_2 \otimes V_3$ as space of $(q+2) \times (q+2)$ matrices, we have 
\begin{equation*}
    T_{CW,q} = \sum_{i = 0}^{q+1} e_i \otimes M(x_i = 1, x_j = 0 \text{ for } i \neq j),
\end{equation*}
The matrix $M(0, \dots, 0, 1)$ has rank 1, hence we can find $\lambda^{(1)}_0, \dots , \lambda^{(1)}_q$ using~\autoref{thm:aidedrankreduction} such that
\begin{equation*}
    T^{(1)}_{CW,q} = \sum_{i = 0}^{q} e_i \otimes \underbrace{\left(M(x_i = 1, x_j = 0 \text{ for } i \neq j) - \lambda^{(1)}_i M(0,\dots,0,1)\right)}_{ =\colon M^{(1)}(x_i = 1, x_j = 0 \text{ for } i \neq j)}
\end{equation*}
satisfies
\begin{equation*}
    R^{\thickdot p}(T^{(1)}_{CW,q}) = R^{\thickdot p}(T_{CW,q}) -1.
\end{equation*}
Note that the matrices $M^{(1)}(x_0, \dots x_q)$ have the form
\begin{equation*}
    M^{(1)}(x_0, \dots , x_{q}) = \begin{pmatrix}
    * &  x_1 &  \dots &  x_q&  x_0  \\
    x_1      &  x_0  &          &       &       \\
    \vdots   &       &  \ddots  &       &       \\
    x_q      &       &          &  x_0  &       \\
    x_0      &       &          &       &  0
    \end{pmatrix}.
\end{equation*}
Still, $M^{(1)}(0, \dots, 0, 1)$ has only non-zero entries in the first column or in the first row. That is, it has rank less than or equal to $p$, hence we can apply~\autoref{thm:aidedrankreduction} again and obtain $\lambda^{(2)}_0, \dots , \lambda^{(2)}_q$ such that 
\begin{equation*}
    T^{(2)}_{CW,q} = \sum_{i = 0}^{q-1} e_i \otimes \underbrace{\left(M^{(1)}(x_i = 1, x_j = 0 \text{ for } i \neq j) - \lambda^{(2)}_i M^{(1)}(0,\dots,0,1)\right)}_{ =\colon M^{(2)}(x_i = 1, x_j = 0 \text{ for } i \neq j)}
\end{equation*}
satisfies 
\begin{equation*}
    R^{\thickdot p}(T^{(2)}_{CW,q})= R^{\thickdot p}(T^{(1)}_{CW,q}) -1.
\end{equation*}
Again, we see that the elements of $T^{(2)}_{CW,q}(V_1^*)$ have the form
\begin{equation*}
    M^{(2)}(x_0, \dots , x_{q-1}) = \begin{pmatrix}
    *     &  x_1  &  \dots   &  *    &  x_0  \\
    x_1      &  x_0  &          &       &       \\
    \vdots   &       &  \ddots  &       &       \\
    *        &       &          &  x_0  &       \\
    x_0      &       &          &       &  0
    \end{pmatrix}
\end{equation*}
Repeating this procedure $q+2$ times leads to
\begin{equation*}
    T^{(q+1)}_{CW,q} = e_0 \otimes \underbrace{\left(M^{(q)}(1,0) - \lambda^{(q+1)} M^{(q)}(0,1)\right)}_{ =\colon M^{(q+1)}}.
\end{equation*}
with 
\begin{equation*}
    M^{(q+1)} = \begin{pmatrix}
    *     &  *  &  \dots   &  *    &  1  \\
    *      &  1  &          &       &       \\
    \vdots   &       &  \ddots  &       &       \\
    *        &       &          &  1  &       \\
    1     &       &          &       &  0
    \end{pmatrix}
\end{equation*}
By~\autoref{thm:aidedrankreduction} we reduced the aided rank by exactly 1 in each step yielding
\begin{equation*}
    R^{\thickdot p}(T^{(q+1)}_{CW,q}) = R^{\thickdot p}(T^{(q)}_{CW,q}) -1= \dots  = R^{\thickdot p}(T_{CW,q}) -(q + 1).
\end{equation*}
As $M^{(q+1)}$ has rank $q +2$ it follows $R^{\thickdot p}(T^{(q+1)}_{CW,q}) = \left\lceil \frac{q + 2}{p}  \right\rceil.$ and with that
\begin{equation*}
    R^{\thickdot p}(T_{CW,q}) = q+1 + \left\lceil \frac{q + 2}{p}  \right\rceil.
\end{equation*}
\end{proof}

We can also find the following upper bound on the aided rank of $T_{CW,q}^{\boxtimes 2}$.

\begin{proposition}
It holds that 
\begin{equation*}
R^{\thickdot p^2}(T_{CW,q}^{\boxtimes 2}) \leq q^2 + 4q + 3 + \left\lceil \frac{(q+2)^2}{p^2}\right\rceil.
\end{equation*}
    In particular, there are choices of $m$, $p$ and $q$ such that both $\langle m \rangle^{\thickdot p} \ngeq T_{CW,q}$ and $(\langle m \rangle^{\thickdot p})^{\boxtimes 2} \geq \left(T_{CW,q}\right)^{\boxtimes 2}$ hold.
\end{proposition}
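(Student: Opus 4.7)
The plan is to exploit the splitting $V_1 = V_1^{(0)} \oplus V_1^{(+)}$ with $V_1^{(0)} = \langle e_0\rangle$ and $V_1^{(+)} = \langle e_1, \dots, e_{q+1}\rangle$, which decomposes $T_{CW,q} = T_0 + T_+$ into the rank-$(q+2)$ piece $T_0 = e_0 \otimes M_0$ and the remainder $T_+ = \sum_{i=1}^{q+1} e_i \otimes M_i$ whose slices have rank at most $2$. Expanding the Kronecker square under the induced block decomposition $V_1^{\otimes 2} = \bigoplus_{a,b \in \{0,+\}} V_1^{(a)} \otimes V_1^{(b)}$ gives
\[
T_{CW,q}^{\boxtimes 2} = T_0\boxtimes T_0 + T_0\boxtimes T_+ + T_+\boxtimes T_0 + T_+\boxtimes T_+,
\]
where the four summands live in pairwise orthogonal blocks of the first factor. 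Since aided rank is subadditive with respect to such direct sums on the first factor (using $\langle r_1+r_2\rangle^{\thickdot p^2}\geq S_1+S_2$ whenever $\langle r_i\rangle^{\thickdot p^2}\geq S_i$ and the $S_i$ are supported in disjoint blocks), $R^{\thickdot p^2}(T_{CW,q}^{\boxtimes 2})$ is at most the sum of the four contributions.

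Each piece is then bounded via \autoref{lem:aidedrankchar}. The diagonal piece contributes $\lceil(q+2)^2/p^2\rceil$, since it has the single slice $M_0\otimes M_0$ of rank $(q+2)^2$. The inner piece $T_+\boxtimes T_+$ contributes $(q+1)^2$, as its $(q+1)^2$ slices $M_i\otimes M_j$ with $i,j\geq 1$ all have rank at most $4\leq p^2$. Each off-diagonal piece has a $(q+1)$-dimensional flattening spanned by the slices $M_0\otimes M_j$ (respectively $M_i\otimes M_0$) of rank at most $2(q+2)$, which serve as a rank-$\leq p^2$ basis when $p^2\geq 2(q+2)$, giving contribution $q+1$ each. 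Summing yields $(q+1)^2 + 2(q+1) + \lceil(q+2)^2/p^2\rceil = q^2+4q+3 + \lceil(q+2)^2/p^2\rceil$.

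For the \emph{in particular} consequence, by \autoref{prop:aidedrankcoppersmith} one has $R^{\thickdot p}(T_{CW,q}) = q+1+\lceil(q+2)/p\rceil$, so taking $m := q+\lceil(q+2)/p\rceil$ automatically violates $\langle m\rangle^{\thickdot p}\geq T_{CW,q}$. Using the identity $(\langle m\rangle^{\thickdot p})^{\boxtimes 2} = \langle m^2\rangle^{\thickdot p^2}$, the second relation reduces to $m^2\geq R^{\thickdot p^2}(T_{CW,q}^{\boxtimes 2})$, and combined with the upper bound above it suffices to check $(q+\lceil(q+2)/p\rceil)^2 \geq q^2+4q+3+\lceil(q+2)^2/p^2\rceil$. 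Asymptotically the left-hand side grows as $q^2(1+1/p)^2$ and the right-hand side as $q^2(1+1/p^2)$, so for any $p\geq 2$ this inequality holds for all sufficiently large $q$; concretely $(p,q)=(5,10)$ lies in the regime $p^2\geq 2(q+2)$ of the previous paragraph and produces $m=13$, $m^2=169$ against the upper bound $149$.

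The subtle step is the bound $R^{\thickdot p^2}(T_0\boxtimes T_+)\leq q+1$, which is transparent when $p^2\geq 2(q+2)$ but genuinely trickier when $p^2<q+2$: every nonzero element of $\operatorname{span}\{M_0\otimes M_j:j\geq 1\}$ has rank a positive multiple of $q+2$, so no rank-$\leq p^2$ basis exists inside the flattening, and one must admit spanning matrices lying outside it. The natural candidates are the $Q_k\otimes M_j$ coming from the decomposition $M_0=Q_1+\dots+Q_s$ of \autoref{prop:aidedrankcoppersmith}, each of rank $\leq 2p\leq p^2$; however the naive count is $s(q+1)$, and obtaining the stated bound requires either sharing these matrices across several pieces or reorganising the decomposition so that the excess is absorbed into the $\lceil(q+2)^2/p^2\rceil$ term. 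Verifying this sharpening uniformly in $p$ is the main obstacle to extending the argument beyond the regime $p^2\geq 2(q+2)$.
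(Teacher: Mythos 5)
Your argument is correct and is essentially the paper's proof in different clothing: the block decomposition $T_{CW,q}=T_0+T_+$ reproduces exactly the paper's census of slice ranks (one slice of rank $(q+2)^2$, $2(q+1)$ of rank at most $2(q+2)$, and $(q+1)^2$ of rank at most $4$), the bound is then read off via \autoref{lem:aidedrankchar} under the same hypothesis $p^2\geq 2(q+2)$ and $p\geq 2$, and your choice $m=R^{\thickdot p}(T_{CW,q})-1$ with $(p,q)=(5,10)$, $m=13$, $m^2=169\geq 149$ is a valid (slightly simpler) substitute for the paper's parameters $(p,q,m)=(6,11,14)$. As for your closing concern, the paper's own proof also works only under the assumption $p^2\geq 2(q+2)$, so the regime restriction you flag is a limitation shared with the original argument rather than a gap specific to yours.
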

\begin{proof}
Let us write
\begin{equation*}
N(x_0, \dots , x_{q+1}, y_0, \dots , y_{q+1}) = 
 \begin{pmatrix}
    x_{q+1}\cdot M(y)  &  x_1 \cdot M(y)  &  \dots   &  x_q \cdot M(y) &  x_0 \cdot M(y)  \\
    x_1 \cdot M(y)      &  x_0 \cdot M(y)  &          &       &       \\
    \vdots   &       &  \ddots  &       &       \\
    x_q   \cdot M(y)    &       &          &  x_0  \cdot M(y) &       \\
    x_0     \cdot M(y)  &       &          &       &  0
    \end{pmatrix}
\end{equation*}
where the matrices $M(y)$ are as in the proof of~\autoref{prop:aidedrankcoppersmith} given by
\begin{equation*}
    M(y) = M(y_0, \dots , y_{q+1}) = \begin{pmatrix}
    y_{q+1}  &  y_1  &  \dots   &  y_q  &  y_0  \\
    y_1      &  y_0  &          &       &       \\
    \vdots   &       &  \ddots  &       &       \\
    y_q      &       &          &  y_0  &       \\
    y_0      &       &          &       &  0
    \end{pmatrix}.
\end{equation*}
With this, we have
\begin{equation*}
	\left(T_{CW,q}\right)^{\boxtimes 2} = \sum_{i,j = 0}^{q+1} (e_{i}\otimes e_{j}) \otimes N(x_i = 1, y_j = 1).
\end{equation*}
and consequently, 
\begin{equation*}
\left(T_{CW,q}\right)^{\boxtimes 2} (\left(V_1^{\otimes 2}\right)^*) = \lbrace N(x,y): x, y \in \mathbb{C}^{q+2}\rbrace.
\end{equation*}
The rank of the matrix $N(x,y)$ depends on these vectors $x$ and $y$.
\begin{enumerate}[(i)]
\item If $x = y = e_0$, the matrix $N(x,y)$ has rank $(q+2)^2$.
\item If $x= e_0$ and $y = e_{q+1}$ or if $x = e_{q+1}$ and $y = e_0$, the matrix $N(x,y)$ has rank $q+2$.
\item If $x= e_0$ and $y \in \lbrace e_{1}, \dots , e_q \rbrace $ or if $x  \in \lbrace e_{1}, \dots , e_q \rbrace $ and $y = e_0$ the matrix $N(x,y)$ has rank $2(q+2)$.
\item  If $x= e_{q+1}$ and $y \in \lbrace e_{1}, \dots , e_q \rbrace $ or if $x  \in \lbrace e_{1}, \dots , e_q \rbrace $ and $y = e_{q+1}$ the matrix $N(x,y)$ has rank $2$.
\item If $x = y = e_{q+1}$, the matrix $N(x,y)$ has rank $1$.
\item If $x \in \lbrace e_{1}, \dots , e_q \rbrace $ and $y \in \lbrace e_{1}, \dots , e_q \rbrace $ the matrix $N(x,y)$ has rank $4$.
\end{enumerate}
Hence, to generate $\left(T_{CW,q}\right)^{\boxtimes 2} (A^*)$, we need to generate one matrix of rank $1$, $2q$ matrices of rank $2$, $q^2$ matrices of rank 4, $2$ matrices of rank $q+2$, $2q$ matrices of rank $2(q+2)$ and one matrix of rank $(q+2)^2$. Assuming $p^2 \geq 2(q+2)$, we will need at most
\begin{equation*}
q^2 + 4q + 3 + \left\lceil \frac{(q+2)^2}{p^2}\right\rceil
\end{equation*}
matrices of rank $p^2$ to generate $\left(T_{CW,q}\right)^{\boxtimes 2} (\left( V_1^{\otimes 2} \right)^*)$. In other words, 
\begin{equation}\label{eq:aidedrankofcwsquared}
R^{\thickdot p^2}(T_{CW,q}^{\boxtimes 2}) \leq q^2 + 4q + 3 + \left\lceil \frac{(q+2)^2}{p^2}\right\rceil
\end{equation}

To find $m$,$p$ and $q$ such that $\langle m \rangle^{\thickdot p} \ngeq T_{CW,q}$ but $\langle m^2 \rangle^{\thickdot p^2} \geq \left(T_{CW,q}\right)^{\boxtimes 2}$, we choose  $p$ and $q$ such that $\left\lceil \frac{q+2}{p+1}\right\rceil < \left\lceil \frac{q+2}{p}\right\rceil$ and $m = q+1+\left\lceil \frac{q+2}{p} \right\rceil$. By construction, we have $\langle m \rangle^{\thickdot p} \ngeq T_{CW,q}$.  We have found an example whenever
\begin{equation*}
q^2 + 4q + 3 + \left\lceil \frac{(q+2)^2}{p^2}\right\rceil \leq  \left(q+1+\left\lceil \frac{q+2}{p+1} \right\rceil \right)^2.
\end{equation*}
To see an explicit example, pick $q = 11$ and $p = 6$. We have 
\begin{align*}
R^{\thickdot 6}(T_{CW,11}) =  11 + 1 + \left\lceil \frac{11+2}{6} \right\rceil = 15 \\
R^{\thickdot 7}(T_{CW,11}) =  11 + 1 + \left\lceil \frac{11+2}{7} \right\rceil = 14,
\end{align*}
that is, $\langle 14 \rangle^{\thickdot 7} \geq T_{CW,11}$ but $\langle 14 \rangle^{\thickdot 6} \ngeq T_{CW,11}$.
From~\autoref{eq:aidedrankofcwsquared}, we get 
\begin{equation*}
R^{\thickdot 6^2}(T_{CW,11}^{\boxtimes 2}) \leq 11^2 + 4\cdot 11 + 3 + \left\lceil \frac{13^2}{6^2}\right\rceil = 173 \leq 14^2 = 196.
\end{equation*}
That gives
\begin{equation*}
\left( \langle 14 \rangle^{\thickdot 6} \right)^{\boxtimes 2} = \langle 196 \rangle^{\thickdot 36} \geq \langle 173 \rangle^{\thickdot 36} \geq T_{CW,11}^{\boxtimes 2}.
\end{equation*}
\end{proof}
\end{appendix}
\end{document}